\documentclass[11pt]{article}

\usepackage[T1]{fontenc}
\usepackage{lmodern}
\usepackage[margin=1in]{geometry}
\usepackage{amsmath,amssymb,amsthm,mathtools}
\usepackage{booktabs,array,longtable,enumitem,microtype,graphicx}
\usepackage{tikz}
\usetikzlibrary{arrows.meta,positioning,calc}
\usepackage[hidelinks]{hyperref}

\usepackage{url}
\usepackage{placeins}
\newtheorem{theorem}{Theorem}[section]
\newtheorem{lemma}[theorem]{Lemma}
\newtheorem{proposition}[theorem]{Proposition}
\newtheorem{corollary}[theorem]{Corollary}
\newtheorem{definition}[theorem]{Definition}
\newtheorem{question}{Question}[section]
\theoremstyle{remark}
\newtheorem{remark}[theorem]{Remark}
\newtheorem{example}[theorem]{Example}
\newcommand{\Hb}{\bar H_w}
\newcommand{\Hrb}{\bar H_r}
\DeclareMathOperator*{\argmax}{arg\,max}
\setlist{nosep}

\newcommand{\quantifierbox}[1]{\par\medskip\noindent\fbox{\begin{minipage}{\dimexpr\linewidth-2\fboxsep-2\fboxrule\relax}\textbf{Quantifiers.} #1\end{minipage}}\par\medskip}
\newcommand{\paperTitle}{Inverse knapsack at two capacities: which pairs of value--cardinality hulls are realizable?}
\hypersetup{pdftitle={Inverse knapsack at two capacities: which pairs of value\textendash{}cardinality hulls are realizable?},pdfauthor={Prashant Chaudhary; Kapil Khandelwal},pdfkeywords={inverse optimization, 0-1 knapsack, cardinality-constrained knapsack, value functions, exchange arguments}}

\title{\paperTitle}
\author{%
  Prashant Chaudhary\thanks{Independent Researcher, San Francisco Bay Area, CA, USA.
    ORCID: \href{https://orcid.org/0009-0001-1980-109X}{0009-0001-1980-109X}.
    Corresponding author.}
  \and
  Kapil Khandelwal\thanks{Department of Civil and Environmental Engineering and Earth Sciences,
    University of Notre Dame, Notre Dame, IN, USA.
    ORCID: \href{https://orcid.org/0000-0002-5748-6019}{0000-0002-5748-6019}.}%
}
\date{}
\begin{document}
\maketitle
\begin{abstract}
One item set evaluated at two capacities $R<D$ produces two concave hulls of optimal value against cardinality. We ask which prescribed pairs arise. Exchange arguments give a necessary system on vertex witnesses, exchange closure (EC), whose scalar consequences form the linear closure. We exhibit a pair satisfying every scalar test that fails EC, so the linear closure is strictly larger already at larger terminal count three; and a globally coherent EC witness admitting no common-size representation although its target pair is realizable. Under a cardinality cap, a four-band classification of one family gives exact thresholds for cap-four realizability, uncapped realizability and the vertex-only capped closure. Pairs whose larger terminal count is at most two are characterized. Under the explicit encoding the decision problem lies in $\Sigma_2^p$ and is polynomial-time for fixed terminal cardinalities. Exact finite certificates establish agreement of the scalar and witness conditions on six specified domains; sufficiency of uncapped EC remains open.
\end{abstract}
\noindent\textbf{Keywords:} inverse optimization; 0--1 knapsack; cardinality-constrained knapsack; value functions; exchange arguments.
\par\smallskip
\noindent\textbf{MSC 2020:} 90C27; 90C10; 68Q15.
\section{Introduction}\label{sec:intro}
Which pairs of optimal-value-versus-cardinality hulls can arise from a \emph{single} set of item values and sizes evaluated at two capacities? Concavity and dominance of one hull over the other are necessary, but they do not express the restrictions imposed by common item sizes. We study this inverse existence question rather than the forward task of optimizing a given instance. We call the capacity-$D$ problem the \emph{larger} problem, indexed by $w$, and the capacity-$R$ problem the \emph{remainder} problem, indexed by $r$.

The organizing distinction is between existence of a target pair and completion of a \emph{specified} witness. The linear closure $L$ retains scalar inequalities; exchange closure (EC) retains labeled vertex attainments and exchange constraints; full witness closure (FWC) additionally requires a common threshold representation and all subset bounds. FWC is exactly realizability. We first identify a regime where these conditions coincide. However, even at larger terminal count three, the scalar tests can accept a target that EC rejects. A separate obstruction distinguishes completion of one witness from existence of another, and cardinality caps introduce a third distinction.

\par\medskip
\noindent\begin{minipage}{\linewidth}
\textbf{Contributions.}\par\smallskip
\noindent\textbf{Necessity.} Two exchanges yield vertex exchange closure (EC) and scalar necessary bounds (Lemma~\ref{lem:exchange}, Theorem~\ref{thm:ec}, Corollaries~\ref{cor:LR}--\ref{cor:PC}).
\par\smallskip
\noindent\textbf{Pruning and exact reformulation.} Every realizable pair has a realization with at most $N^*$ items, where $N^*$ is the sum of the abscissae of all positive vertices of the two hulls. Pruning and rescaling yield equivalence with full witness closure (Lemma~\ref{lem:prune}, Theorem~\ref{thm:fwc}).
\par\smallskip
\noindent\textbf{Specified-witness completion.} A globally coherent EC witness may fail to admit sizes even though its target pair is realizable (Theorem~\ref{thm:trade-family}).
\par\smallskip
\noindent\textbf{Caps.} An irrelevance bound and a four-band classification separate cap dependence from failure of vertex-only capped EC (Theorems~\ref{thm:cap-large}--\ref{thm:cap-family}; Corollary~\ref{cor:cap-EC}).
\par\smallskip
\noindent\textbf{Small terminal counts.} For larger terminal count $b_w\le2$, the scalar bounds characterize realizability (Theorem~\ref{thm:bw2}).
\par\smallskip
\noindent\textbf{Scalar insufficiency.} Dominance and the scalar bounds do not characterize realizability at $b_w=3$: $L$ is strictly larger than EC (Theorem~\ref{thm:scalar-gap}, Remark~\ref{rem:scalar-gap-ec}, Corollary~\ref{cor:strict-L}).
\par\smallskip
\noindent\textbf{Constructions.} Six constructive regions supply explicit instances and expose a rational gap in the first five (Theorems~\ref{thm:AB}, \ref{thm:T5}, \ref{thm:T6}, and Appendix~\ref{app:templates}).
\par\smallskip
\noindent\textbf{Complexity.} The explicitly encoded problem lies in $\Sigma_2^p$ and is polynomial-time for fixed terminal counts; hardness is open (Theorem~\ref{thm:complexity}).
\par\smallskip
\noindent\textbf{Finite agreement.} On six specified integer-slope domains, dominance and two scalar inequalities already characterize the realizable pairs (Propositions~\ref{prop:finite}--\ref{prop:scalarfinite}).
\end{minipage}
\par\medskip

Throughout, a cap $K$ admits at most $K$ items in the larger problem and $K-1$ in the remainder; Section~\ref{sec:caps} gives the precise convention.
\paragraph{A reservation interpretation.}
Reserving a mandatory item of size $D-R$ leaves the same optional items competing at capacity $R$ instead of $D$. An advertising break with a mandatory insertion is one interpretation. Pinned virtual machines, positioning flights, guard intervals and reserved storage bays can lead to an analogous residual-capacity question only after specifying indivisible optional items with additive values and one scalar resource. These are modeling analogies, not claims that operational restrictions in those settings reduce to this model. We isolate the mathematical question of which two hulls such a shared instance can produce.

\subsection{Relation to prior work}\label{sec:related}
Gilmore and Gomory study unbounded knapsack functions of capacity \cite{GG1966}; Blair and Jeroslow characterize integer-programming value functions as the right-hand side varies \cite{BJ1982}. In inverse $0$--$1$ knapsack, Roland, Figueira and De Smet adjust profits to make a prescribed feasible packing optimal \cite{RFS2013}. Burkard and Pferschy recover a scalar parameter in a fixed parametric item set so that the optimal value equals a prescribed value \cite{BP1995}. These are adjacent inverse/value-function questions, but their inputs and quantifiers differ from prescribing two cardinality hulls while allowing the shared instance to vary.

The multiple-choice nested knapsack model of Armstrong, Sinha and Zoltners uses nested resource restrictions on a packing \cite{ASZ1982}. The structure here is one ground set evaluated at two scalar capacities. Cardinality-constrained knapsack algorithms optimize a given instance with a specified count restriction \cite{CKPP2000}; they do not by themselves reconstruct two prescribed numerical profiles. The two-part survey of Cacchiani, Iori, Locatelli and Martello provides broader context for classical and multidimensional variants \cite{CILM2022a,CILM2022b}.

Parametric-profit and parametric-weight knapsack are also relevant because they study families of optimal values as a parameter varies. Eben-Chaime gives a parametric algorithm for bicriteria knapsack \cite{EC1996}; Holzhauser and Krumke study approximation with parametric profits \cite{HK2017}, and Halman, Holzhauser and Krumke study parametric weights \cite{HKweights2017}. These forward parametric problems do not supply the shared-instance hull-pair characterization asked for here.

At the level of feasible families, threshold representation is closer. Hojny et al. survey the connections between knapsack polytopes, independence systems and matroids \cite{Hojny2020}. Our size-feasible families use a threshold constant and allow subsets of different cardinalities. They have the non-uniform threshold form studied for matroid independence systems by Giles and Kannan \cite{GK1980}, although knapsack feasibility need not define a matroid. Partida distinguishes this notion from thresholdness restricted to fixed-rank bases (which Partida calls simply thresholdness; we say uniform thresholdness for contrast), and explains the corresponding non-uniform asummability obstruction \cite{Partida2024}. In arXiv:2408.07810v2, these distinctions appear in Remarks~2.15 and~4.7. We use only the necessary equal-incidence sum argument for two thresholds, not a matroid-specific sufficiency theorem.

These works do not characterize the shared-instance pairs of cardinality hulls considered here.

\paragraph{What is not claimed.}
We do not claim that every EC witness completes, that vertex-only capped EC is sufficient, that the constructions classify all rational pairs with $b_w=3$, or that the decision problem is NP-complete.

Table~\ref{tab:notation} collects the notation used below.
\section{Setting and hull reduction}\label{sec:setting}
An instance consists of a finite set $E$ of items with rational values $a_i\ge0$, positive integer sizes $d_i$, and integer capacities $0\le R<D$. Write $G(S)=\sum_{i\in S}a_i$ and $d(S)=\sum_{i\in S}d_i$. Rational sizes can be scaled to integers together with the capacities; values remain rational. We call an item \emph{long} if $d_i>R$ and \emph{short} otherwise. Since individually $D$-infeasible items can be discarded, we assume $d_i\le D$ for every item considered in a realization.

For $\alpha\in\{w,r\}$, let $U^\alpha(n)$ be the maximum value of an admissible $n$-set under capacity $D$ or $R$, respectively, using $-\infty$ when no such set exists. Put $U^\alpha(0)=0$ and $b_\alpha=\min\argmax_n U^\alpha(n)$. The relevant hull is the upper concave hull of the points up to $b_\alpha$, evaluated at each integer count. A target $H_\alpha$ is a strictly increasing concave rational sequence on $\{0,\ldots,b_\alpha\}$ with $H_\alpha(0)=0$; the zero sequence $(0)$ is allowed. Write
\[
 h_n=H_w(n)-H_w(n-1),\qquad
 \rho_m=H_r(m)-H_r(m-1),\qquad
 \bar H_\alpha(k)=H_\alpha(\min\{k,b_\alpha\}).
\]
A positive count is a vertex if it is terminal or its successive slopes differ. However, an intermediate point on a linear run need not be attained by any set; confusing its hull value with a raw optimum changes the inverse problem. Let $\mathcal V_w,\mathcal V_r$ denote the positive vertex sets and put
\[N^*=\sum_{n\in\mathcal V_w}n+\sum_{m\in\mathcal V_r}m.\]
A pair is \emph{realizable} if a finite instance reproduces both hulls. The full-witness formulation below is an equivalent existence statement using at most $N^*$ items, rather than a third relaxation.

The hull is the relevant object because it determines the best net value at every per-item cost: for nonnegative item cost $c$, the envelope is
\[
 V^\alpha(c)=\max_n\{U^\alpha(n)-cn\}
           =\max_{0\le n\le b_\alpha}\{H_\alpha(n)-cn\}.
\]
Off-hull points do not affect the value; points after the first global maximum cannot improve it for $c\ge0$. At $c=0$ they can affect the set of maximizing counts, so only the envelope \emph{value} is asserted here. Hull-pair realizability means equality of the two specified increasing hulls, not equality of the full raw profiles.

We will repeatedly use the following equivalence: the target hull is obtained precisely when every feasible subset of count $k$ obeys $\bar H_\alpha(k)$ and each positive target vertex has an attaining set, since a hull is the least concave majorant of its raw profile and each of its vertices is a raw point. Every individually feasible item has $a_i\le \bar H_w(1)$, and every short item has $a_i\le\rho_1$, where $\rho_1=0$ if $b_r=0$. Unless stated otherwise, counts are uncapped.

\begin{table}[htbp]
\centering\small
\caption{Notation. The indices $w$ and $r$ refer to capacities $D$ and $R$, respectively.}
\label{tab:notation}
\begin{tabular}{@{}p{3.5cm}p{11.7cm}@{}}\toprule
Symbol & Meaning\\\midrule
$U^\alpha(n)$ & Raw maximum value among admissible $n$-sets; $U^\alpha(0)=0$.\\
$H_\alpha$, $b_\alpha$ & Prescribed increasing cardinality hull and its first maximizing count.\\
$h_n$, $\rho_m$ & Consecutive slopes of $H_w$ and $H_r$.\\
$\bar H_\alpha(k)$ & Flat extension $H_\alpha(\min\{k,b_\alpha\})$.\\
$\mathcal V_\alpha$ & Positive hull vertices: terminal count and strict slope changes.\\
Long / short & Size greater than $R$ / at most $R$; corresponding labels in EC.\\
$N^*$ & Sum of the abscissae of all positive vertices; a bound on the number of witness items.\\
\eqref{eq:LR}, \eqref{eq:LRb}, \eqref{eq:PC} & All-long removal, all-long pair exchange, and pair-complement scalar bounds.\\\bottomrule
\end{tabular}
\end{table}

\FloatBarrier
\paragraph{A four-item realization.}
For the items $(4,3),(3,1),(3,1),(2,1)$ at $D=5$ and $R=3$, with each ordered pair denoting \emph{(value, size)}, the two hulls are
\[
 H_w=(0,4,7,10),\qquad H_r=(0,4,6,8).
\]
The size-three item fits alone at $R$, whereas the three unit-size items fit together there; at $D$ the size-three item can join either one or both value-three items. This is a realization of two prescribed hulls by the same items, without any claim that every raw count value is prescribed; a power on an item pair below denotes its multiplicity.
\section{The three closures and necessity}\label{sec:ec}
The three conditions retain different information: scalar bounds, abstract vertex witnesses, and a common-size realization. The two exchanges below generate the first two levels.
\begin{lemma}[Two exchanges]\label{lem:exchange}
Let $X$ be $D$-feasible, $Z$ be $R$-feasible, $|X|=n$, and $|Z|=m$.
\begin{enumerate}[label=(\alph*)]
\item If $x\in X$ is long, then $(X\setminus\{x\})\cup(Z\setminus X)$ fits $D$, has count $n-1+m-|X\cap Z|$, and value $G(X)-a_x+G(Z)-G(X\cap Z)$.
\item If $x,x'\in X$ satisfy $d_x+d_{x'}>R$ and $y\in X\setminus\{x,x'\}$ is not in $Z$, then $\{y\}\cup Z$ fits $D$ and has count $m+1$ and value $a_y+G(Z)$.
\end{enumerate}
\end{lemma}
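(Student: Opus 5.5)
Both parts reduce to bounding the size of the new set by $D$. The count and value statements are bookkeeping over disjoint unions. Throughout I use only that sizes are positive, so removing items never increases size, together with the long/short definitions.

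For (a), set $X'=(X\setminus\{x\})\cup(Z\setminus X)$. This is a disjoint union, since $Z\setminus X$ avoids $X\setminus\{x\}$. Hence
\[
|X'|=(n-1)+(m-|X\cap Z|),\qquad G(X')=G(X)-a_x+G(Z)-G(X\cap Z).
\]
The value formula uses $G(Z\setminus X)=G(Z)-G(X\cap Z)$. For feasibility I bound
\[
d(X')=d(X)-d_x+d(Z\setminus X)\le d(X)-d_x+d(Z).
\]
Then I use $d(Z)\le R<d_x$, because $x$ is long. This gives $d(X')<d(X)\le D$, so $X'$ fits $D$.

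For (b), the set $\{y\}\cup Z$ has $m+1$ elements and value $a_y+G(Z)$, because $y\notin Z$. For feasibility I chain three bounds. First, $d_y+d(Z)\le d_y+R$. Second, $R<d_x+d_{x'}$ by hypothesis. Third, since $x,x',y$ are distinct elements of $X$ and all sizes are positive, $d_y+d_x+d_{x'}\le d(X)\le D$. Together these give $d(\{y\}\cup Z)<D$.

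I expect no real obstacle. The only care needed is the disjointness and distinctness bookkeeping: (a) needs $Z\setminus X$ disjoint from $X\setminus\{x\}$, and (b) needs $y\neq x,x'$ so that all three sizes are counted inside $d(X)$. Both hold by hypothesis.
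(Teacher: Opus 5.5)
Your proposal is correct and follows the paper's proof essentially line for line: in (a) you bound $d(X')\le d(X)-d_x+d(Z)<d(X)\le D$ via $d(Z)\le R<d_x$, and in (b) you chain $d_y+d(Z)\le d_y+R<d_y+d_x+d_{x'}\le d(X)\le D$. Your disjointness bookkeeping for the count and value formulas is what the paper leaves implicit. Like the paper, you also rely on $x\neq x'$, which the intended reading of $\{x,x'\}$ as a two-element pair supplies.
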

\begin{proof}
We derive both exchanges by comparing the sizes removed and inserted.
For (a), a long item cannot belong to $Z$, and the new size is at most $d(X)-d_x+d(Z)<d(X)\le D$. For (b), $d_y+d(Z)\le d_y+R<d_y+d_x+d_{x'}\le d(X)\le D$.
\end{proof}

In an abstract witness, ``the pair does not fit $R$'' is represented by absence of that pair from the declared family $\mathcal F_n$ of Definition~\ref{def:ec}; (EC5) is the second exchange, not a third rule.

Exchange closure asks for labeled attaining sets at the positive vertices of both hulls, a downward-closed declaration of which subsets of each $w$-witness are $r$-feasible, and injective identifications of short items between those sets, such that every long-removal and pair-complement exchange respects $H_w$. Item sizes and global coherence across pairs $(n,m)$ are not required. Those are questions of completion of a specified witness, not closure.
\begin{center}\small
\begin{tabular}{@{}ll@{}}\toprule
$X_n$, $Z_m$ & Labeled attaining sets for a $w$-vertex and an $r$-vertex.\\
$\mathcal F_n$ & Downward-closed family of declared $r$-feasible subsets of $X_n$.\\
$O_{n,m}$, $\pi_{n,m}$ & Overlap subset and its injective, value-preserving matching into $Z_m$.\\\bottomrule
\end{tabular}
\end{center}

A shorter definition retaining only the hull values would miss the overlap term in long removal. We therefore retain local item labels and declared feasible subsets, while deliberately not asking for sizes or for globally consistent identities. The distinction matters: satisfying these local exchanges need not make a specified witness completable.

\begin{definition}[Vertex exchange closure]\label{def:ec}
Require dominance $H_r(k)\le\bar H_w(k)$ for $0\le k\le b_r$. For each $w$-vertex $n$ introduce a labeled $n$-set $X_n$, values $x_{n,i}$, long/short labels, and a downward-closed family $\mathcal F_n$ of declared $r$-feasible subsets. Each $\mathcal F_n$ contains the empty set. Its singletons are precisely the short labels. For each $r$-vertex $m$ introduce an $m$-set $Z_m$ with values $z_{m,j}$. For each $(n,m)$ specify a partial matching from $O_{n,m}\subseteq X_n$ into $Z_m$, denoted $\pi_{n,m}$, with $O_{n,m}\in\mathcal F_n$. The witness is admissible if the following five groups hold:
\begin{enumerate}[label=\textup{(EC\arabic*)},leftmargin=4.5em]
\item $0\le x_{n,i}\le H_w(1)$, short values are at most $\rho_1$, and $\sum_i x_{n,i}=H_w(n)$. Every subset of $X_n$ obeys its larger bound; every member of $\mathcal F_n$ obeys its remainder bound.
\item $0\le z_{m,j}\le\rho_1$, $\sum_jz_{m,j}=H_r(m)$, and every subset of $Z_m$ obeys its remainder bound.
\item Each matched value agrees: $x_{n,i}=z_{m,\pi_{n,m}(i)}$ for $i\in O_{n,m}$. The matching is injective and uses only short labels.
\item For every $(n,m)$ and every long $i\in X_n$ (hence $i\notin O_{n,m}$), with $k=|O_{n,m}|$,
\[
 \bar H_w(n-1+m-k)\ge H_w(n)-x_{n,i}+H_r(m)
                   -\sum_{i'\in O_{n,m}}z_{m,\pi_{n,m}(i')}.
\]
\item For every $(n,m)$, every two-element subset $\{i,i'\}\subseteq X_n$ absent from $\mathcal F_n$, and every $y\in X_n\setminus(\{i,i'\}\cup O_{n,m})$,
\[
 \bar H_w(m+1)\ge x_{n,y}+H_r(m).
\]
\end{enumerate}
EC holds if such a witness exists. If $b_r=0$, there are no positive $r$-vertices, $\rho_1=0$, and the conditions (EC2)--(EC5) quantifying over $m$ are vacuous; dominance is the zero-sequence condition. The local subset bounds in (EC1) still apply. An EC witness is \emph{globally coherent} if one ground set contains every $X_n$ and $Z_m$ and all maps $\pi_{n,m}$ are restrictions of item identity on that set.
\end{definition}
\begin{theorem}[Necessity]\label{thm:ec}
Every realizable pair satisfies EC.
\end{theorem}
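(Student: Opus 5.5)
Given a realization (items $E$, values $a_i$, sizes $d_i$, capacities $R<D$), we build the EC witness directly from the instance, taking every label, value and family from actual items. The identification maps $\pi_{n,m}$ will be restrictions of item identity, so the witness produced is in fact globally coherent.

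\emph{Dominance.} Any $R$-feasible $k$-set is $D$-feasible, so $U^r(k)\le U^w(k)\le \bar H_w(k)$ for each $k$. At each $r$-vertex $m$ the hull value $H_r(m)$ equals a raw optimum $U^r(m)$, so $H_r(m)\le\bar H_w(m)$ at vertices. Between vertices $H_r$ is linear, while $\bar H_w$ is concave on $\{0,\dots,b_r\}$ (it is flat beyond $b_w$ and concave before). A concave function that dominates a linear function at both endpoints of an interval dominates it throughout, so dominance holds on all of $0\le k\le b_r$.

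\emph{Witness sets.} By the equivalence recalled in Section~\ref{sec:setting}, every positive $w$-vertex $n$ has an attaining $D$-feasible set; call it $X_n$, with $|X_n|=n$ and $G(X_n)=H_w(n)$. Likewise each $r$-vertex $m$ has an $R$-feasible $Z_m$ with $|Z_m|=m$ and $G(Z_m)=H_r(m)$. On these sets:
\begin{itemize}
\item values are the item values, and long/short labels are inherited from $d_i>R$;
\item $\mathcal F_n$ is the family of subsets of $X_n$ with total size at most $R$. It is downward closed, contains $\emptyset$, and its singletons are exactly the short items;
\item $O_{n,m}=X_n\cap Z_m$ and $\pi_{n,m}$ is the identity. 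Then $O_{n,m}\subseteq Z_m$ is $R$-feasible, hence in $\mathcal F_n$, contains no long item, and has matching values (EC3).
\end{itemize}

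\emph{Bound conditions.} For (EC1) and (EC2): value bounds hold by the remarks that $a_i\le\bar H_w(1)$ and $a_i\le\rho_1$ for short items. The sums are correct by construction. Subset bounds hold because every subset of a $D$- (resp.\ $R$-) feasible set is feasible, so its value is at most $U^\alpha(k)\le\bar H_\alpha(k)$.

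\emph{Exchange conditions.} For (EC4), apply Lemma~\ref{lem:exchange}(a) to $X=X_n$, $Z=Z_m$ and a long $x$. This gives a $D$-feasible set of count $n-1+m-k$ and value $H_w(n)-x_{n,i}+H_r(m)-G(O_{n,m})$, whose value is bounded by $\bar H_w(n-1+m-k)$. For (EC5), a pair $\{i,i'\}\notin\mathcal F_n$ means $d_i+d_{i'}>R$, and $y\notin O_{n,m}$ means $y\notin Z_m$. Lemma~\ref{lem:exchange}(b) then gives a $D$-feasible $(m+1)$-set of value $a_y+H_r(m)$, which is bounded by $\bar H_w(m+1)$.

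\emph{Case $b_r=0$.} Conditions (EC2)--(EC5) are vacuous, and dominance reduces to the zero-sequence condition.

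\emph{Main obstacle.} The delicate step is the justification that hull vertices are attained by raw optima, which the whole construction rests on. This is already supplied by the equivalence stated in Section~\ref{sec:setting}. The remaining subtlety is the dominance step at non-vertex counts, which the concavity argument above handles.
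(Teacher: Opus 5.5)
Your proposal is correct and follows the paper's route. You read the witness off actual attaining sets, with their $R$-feasible subfamilies and actual intersections, and obtain (EC4)--(EC5) from Lemma~\ref{lem:exchange}; your vertex-and-concavity argument for dominance just spells out the paper's one-line remark that every $R$-feasible set is $D$-feasible.
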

\begin{proof}
We read every component of the witness from one realizing instance.
Choose actual attaining sets at the positive vertices and use their values, labels, local $r$-feasible families, and actual intersections. Subset feasibility gives (EC1)--(EC2), item identity gives (EC3), and Lemma~\ref{lem:exchange} gives (EC4)--(EC5). Dominance follows because every $r$-feasible set is $w$-feasible.
\end{proof}
The definition has a finite mixed-integer encoding. The number of local subset constraints grows exponentially with witness cardinalities in general; no polynomial-size formulation is asserted. An exact realizing instance supplies an EC certificate by Theorem~\ref{thm:ec}; a numerical feasibility status alone is not such a certificate.

\subsection{Useful scalar corollaries}
The scalar consequences \eqref{eq:LR} and \eqref{eq:PC} suffice for the uncapped finite exclusion certificates.
\begin{corollary}[All-long bounds]\label{cor:LR}
At a $w$-vertex $n$ with $h_n>\rho_1$, EC implies
\begin{align}
 \bar H_w(n-1+m)&\ge\frac{n-1}{n}H_w(n)+H_r(m),\label{eq:LR}\\
 \bar H_w(m+1)&\ge\frac1nH_w(n)+H_r(m)\quad(n\ge3),\label{eq:LRb}
\end{align}
for $0\le m\le b_r$ in the uncapped setting.
\end{corollary}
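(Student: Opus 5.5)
The plan is to first show that the hypothesis $h_n>\rho_1$ forces every label of the witness set $X_n$ to be long. Then (EC4) and (EC5) collapse to the two displayed scalar inequalities at $r$-vertices. Finally, concavity extends them to all $0\le m\le b_r$.

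\emph{Step 1: all labels of $X_n$ are long.} Fix an EC witness and the $w$-vertex $n$. For any $i\in X_n$, the set $X_n\setminus\{i\}$ has $n-1$ elements, so by the subset bound in (EC1) its value is at most $H_w(n-1)$. Since $\sum_i x_{n,i}=H_w(n)$, this gives $x_{n,i}\ge H_w(n)-H_w(n-1)=h_n>\rho_1$. Short labels have value at most $\rho_1$ by (EC1), so every label in $X_n$ is long. Two consequences follow.
\begin{itemize}
\item By (EC3) the matching uses only short labels, so $O_{n,m}=\emptyset$ and $k=0$ for every $m$.
\item $\mathcal F_n$ is downward closed and its singletons are exactly the short labels, so $\mathcal F_n=\{\emptyset\}$. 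In particular, no two-element subset of $X_n$ lies in $\mathcal F_n$.
\end{itemize}

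\emph{Step 2: the bounds at $r$-vertices and at $m=0$.}

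For \eqref{eq:LR} at an $r$-vertex $m$, choose $i\in X_n$ of minimum value, so $x_{n,i}\le H_w(n)/n$. Apply (EC4) with $k=0$:
\[
\bar H_w(n-1+m)\ge H_w(n)-x_{n,i}+H_r(m)\ge \tfrac{n-1}{n}H_w(n)+H_r(m).
\]
At $m=0$, the same choice together with the (EC1) subset bound on $X_n\setminus\{i\}$ gives $H_w(n-1)\ge \frac{n-1}{n}H_w(n)$.

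For \eqref{eq:LRb}, assume $n\ge3$. Choose $y$ of maximum value, so $x_{n,y}\ge H_w(n)/n$, and let $i,i'$ be two further labels. By Step 1 the pair $\{i,i'\}$ is absent from $\mathcal F_n$, and $y\notin O_{n,m}=\emptyset$. Then (EC5) yields $\bar H_w(m+1)\ge x_{n,y}+H_r(m)\ge \frac1nH_w(n)+H_r(m)$. At $m=0$, the inequality reads $H_w(1)\ge H_w(n)/n$, which holds by concavity of $H_w$ (also because $H_w(1)\ge x_{n,y}$ by (EC1)).

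\emph{Step 3: extension to non-vertex $m$.} This is the only step needing care, because (EC4) and (EC5) quantify only over $r$-vertices. The counts $0$ and $b_r$ (the latter a vertex when $b_r>0$) bracket every $m$. Between consecutive breakpoints $m_1<m<m_2$ of $H_r$ (taking $m_1=0$ if needed), $H_r$ is affine. Hence each right-hand side is affine in $m$ on $[m_1,m_2]$.

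Each left-hand side, $m\mapsto \bar H_w(n-1+m)$ or $m\mapsto\bar H_w(m+1)$, is a shift of the flat extension of a concave increasing sequence, and is therefore concave on the integers. A concave function that dominates an affine function at both endpoints of an integer interval dominates it throughout. So both inequalities hold for all $0\le m\le b_r$.

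I expect the main subtlety to lie here: confirming that the flat extension $\bar H_w$ remains concave, which holds because $H_w$ is increasing up to $b_w$ and constant afterwards.
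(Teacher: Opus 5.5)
Your proof is correct and follows the paper's argument essentially step for step. You force every label of $X_n$ to be long via the bound on complementary $(n-1)$-subsets, deduce an empty overlap and $\mathcal F_n=\{\varnothing\}$, apply (EC4) to the cheapest label and (EC5) to the dearest one at remainder vertices, and extend by concave-versus-affine interpolation from $0$ through consecutive remainder vertices; the only cosmetic difference is that at $m=0$ you obtain \eqref{eq:LR} from the (EC1) bound on $X_n\setminus\{i\}$ rather than from concavity, which is equally valid.
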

\begin{proof}
We use the smallest permitted item value, then eliminate the unknown value by averaging.
A short member in the attaining $n$-set would leave an $(n-1)$-subset worth at least $H_w(n)-\rho_1>H_w(n-1)$, so the witness must be all long. Its cheapest value is at most its average and its dearest value at least its average. Its intersection with a remainder witness is empty, and no pair of its items belongs to its declared $r$-feasible family, since that family is downward closed and its singletons are precisely the short labels. Apply the two exchange constraints. The exchange constraints give the bounds at positive remainder vertices. At $m=0$, \eqref{eq:LR} is $h_n\le H_w(n)/n$ and \eqref{eq:LRb} is $H_w(1)\ge H_w(n)/n$; both follow from concavity, not from an exchange with an undefined $Z_0$. Concavity of the left sides and linear interpolation of the right sides, $H_r$ being affine between consecutive vertices, then extend the bounds between zero and consecutive remainder vertices.
\end{proof}

On the six domains of Appendix~\ref{app:computations}, no target is excluded by \eqref{eq:LRb} alone. We retain it as a distinct consequence of pair-complement exchange under the all-long hypothesis; its antecedent differs from that of \eqref{eq:PC}.
\begin{corollary}[Pair-complement bound]\label{cor:PC}
Suppose $n\ge3$ is a $w$-vertex and
\[
 \bar H_r(2)<H_w(n)-(n-2)H_w(1).
\]
Write $H=H_w(n)$ and $\rho_1=H_r(1)$, taking $\rho_1=0$ when $b_r=0$. Every realizable pair, and every EC-admissible pair, satisfies
\begin{equation}\label{eq:PC}
 \bar H_w(m+1)\ \ge\ H_r(m)+
 \min\left\{\frac Hn,\frac{H-\rho_1}{n-1}\right\}
 \qquad(0\le m\le b_r).
\end{equation}
\end{corollary}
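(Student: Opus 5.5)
Since Theorem~\ref{thm:ec} gives EC for every realizable pair, we argue from an EC witness $X_n$ at the $w$-vertex $n$. The hypothesis $\bar H_r(2)<H-(n-2)H_w(1)$ is used to show that $X_n$ has a two-element subset missing from $\mathcal F_n$. Every value satisfies $x_{n,i}\le H_w(1)$ by (EC1), so every pair $\{i,i'\}\subseteq X_n$ has value
\[
x_{n,i}+x_{n,i'}=H-\sum_{j\ne i,i'}x_{n,j}\ge H-(n-2)H_w(1)>\bar H_r(2).
\]
If the pair belonged to $\mathcal F_n$, it would have to obey its remainder bound $\bar H_r(2)$ by (EC1). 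Hence no pair of $X_n$ is declared $r$-feasible. (When $b_r\le1$, $\bar H_r(2)=H_r(b_r)$ and the same argument applies.)

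Next we choose the item $y$ for (EC5). Fix a positive $r$-vertex $m$ and the overlap $O=O_{n,m}\in\mathcal F_n$. Since no pair lies in $\mathcal F_n$ and $\mathcal F_n$ is downward closed, $|O|\le1$. We want $y\notin O$ together with a pair $\{i,i'\}\subseteq X_n\setminus\{y\}$. Because $n\ge3$, such a pair exists for any $y$, and every pair is absent from $\mathcal F_n$. So (EC5) applies to every $y\in X_n\setminus O$ and gives $\bar H_w(m+1)\ge x_{n,y}+H_r(m)$. We take $y$ to be the most valuable item outside $O$.

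We then bound $x_{n,y}$ from below.
\begin{itemize}
\item If $O=\emptyset$, the maximum value over $X_n$ is at least the average $H/n$.
\item If $O=\{o\}$, then $o$ is short by (EC3), so $x_{n,o}\le\rho_1$ by (EC1). The remaining $n-1$ items then have total value at least $H-\rho_1$, so their maximum is at least $(H-\rho_1)/(n-1)$.
\end{itemize}
In either case $x_{n,y}\ge\min\{H/n,(H-\rho_1)/(n-1)\}$. This proves \eqref{eq:PC} at every positive $r$-vertex $m$.

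Finally we extend \eqref{eq:PC} to all $0\le m\le b_r$, as in the proof of Corollary~\ref{cor:LR}. At $m=0$ the claim reads $H_w(1)\ge\min\{\cdots\}$. This holds because $H_w(1)\ge H/n$ by concavity with $H_w(0)=0$. Between consecutive vertices (including $0$ and the first vertex), $H_r$ is affine while $m\mapsto\bar H_w(m+1)$ is concave. The inequality therefore passes to intermediate integers by interpolation.

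The main obstacle I expect is the bookkeeping of the degenerate cases $b_r\le1$ and $\rho_1=0$. When $b_r=0$, EC imposes nothing over $m$, and the claim is only the $m=0$ case, which concavity already gives. I also need to check that $O$ can contain at most one element without $O$ itself being a forbidden pair; this is exactly where downward closure of $\mathcal F_n$ is used.
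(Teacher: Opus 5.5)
Your proof is correct and follows the paper's argument. No pair of the attaining $n$-set can be $r$-feasible, so the overlap has at most one short item; the best non-overlapped item is worth at least $\min\{H/n,(H-\rho_1)/(n-1)\}$; and the pair-complement exchange, concavity at $m=0$ and interpolation finish the proof. The only difference is the order of presentation: the paper runs the argument on an actual realization and then notes it is internal to EC, whereas you run it inside the EC witness and get the realizable case from Theorem~\ref{thm:ec}.
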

\begin{proof}
We separate the two possible overlap sizes and use the same pair-complement exchange in each.
Every pair in the attaining $n$-set $X$ is worth at least $H-(n-2)H_w(1)$, so none can fit $R$. Consequently a $r$-attaining set $Z$ intersects $X$ in at most one item. If the intersection is empty, the most valuable item in $X\setminus Z$ is worth at least $H/n$. Otherwise the common item is short and worth at most $\rho_1$, so the most valuable remaining item is worth at least $(H-\rho_1)/(n-1)$. Choose such an item $y$, and choose two other members of $X$; their pair does not fit $R$. The pair-complement exchange admits $\{y\}\cup Z$ in $D$ and gives \eqref{eq:PC} at remainder vertices. At $m=0$, the bound follows from $\min\{H/n,(H-\rho_1)/(n-1)\}\le H/n\le H_w(1)$. Between zero and consecutive positive remainder vertices, the right side is linear in $m$ and the left side is concave, so the inequality extends to all intermediate counts. The same argument is internal to EC: the subset bounds exclude every pair from its declared family, the overlap has size at most one, and the corresponding exchange constraint applies.
\end{proof}
\par\noindent\textit{Remark (capped witnesses).} For a vertex-only capped witness (Section~\ref{sec:caps}), retain the bound at $r$-vertices with admissible output counts; interpolation requires both endpoints to remain available.

Figure~\ref{fig:exchange} summarizes the two exchanges, the pair-complement bound, and the balanced trade of Section~\ref{sec:trade}.
\begin{example}[Dominance is not sufficient]\label{ex:dominance}
The pair $H_w=(0,3,6)$, $H_r=(0,2,4)$ satisfies dominance. At the $w$-vertex $n=2$, $h_2=3>2=\rho_1$, but the long-removal bound at $m=2$ would require $6\ge3+4$. Thus the pair is not realizable.
\end{example}
\begin{proposition}[A flat larger hull can still be incompatible]\label{prop:flat-obstruction}
For rational $a>0$ and $a/2<r<a$, the pair
\[
 H_w=(0,a,2a,3a),\qquad H_r=(0,a,a+r,a+2r)
\]
satisfies dominance but is not realizable, uncapped or at any integer cap. Caps $K\le3$ are dimensionally excluded by $b_r=3$.
\end{proposition}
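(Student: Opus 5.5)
The plan is to apply the pair-complement bound of Corollary~\ref{cor:PC} at the single $w$-vertex $n=3$ and remainder count $m=3$. The all-long bounds of Corollary~\ref{cor:LR} are useless here: $h_3=a=\rho_1$, so their hypothesis $h_n>\rho_1$ fails.

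\textbf{Dominance.} First I would record that dominance holds. We have $H_r(1)=a=H_w(1)$. Next, $H_r(2)=a+r<2a$ because $r<a$. Finally, $H_r(3)=a+2r<3a$.

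\textbf{Hull structure.} The larger hull is linear with terminal count $b_w=3$, so $\mathcal V_w=\{3\}$ and $\bar H_w(4)=3a$. On the remainder side, the slopes are $a,r,r$ with $r<a$, so $\mathcal V_r=\{1,3\}$ and $b_r=3$.

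\textbf{Applying Corollary~\ref{cor:PC}.} I would check its hypothesis directly:
\[
\bar H_r(2)=a+r<2a=H_w(3)-(3-2)H_w(1).
\]
Hence \eqref{eq:PC} applies at $m=3$, with $H=3a$ and $\rho_1=a$. The minimum on the right is
\[
\min\{3a/3,\ (3a-a)/2\}=a,
\]
so the corollary demands $3a\ge a+2r+a$, i.e.\ $r\le a/2$. This contradicts the assumption $r>a/2$, so the pair fails EC. By Theorem~\ref{thm:ec} it is therefore not realizable in the uncapped setting.

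\textbf{Direct version of the exchange.} For transparency I would also spell out the underlying exchange, because it is needed for the caps. Since every item has value at most $H_w(1)=a$, the attaining $3$-set $X$ consists of three items of value exactly $a$. Every pair of them is worth $2a>H_r(2)$, so no pair fits $R$. Consequently an $R$-attaining $3$-set $Z$ with $G(Z)=a+2r$ meets $X$ in at most one item. Choose $y\in X\setminus Z$, and let the other two members of $X$ serve as $x,x'$. Lemma~\ref{lem:exchange}(b) then makes $\{y\}\cup Z$ a $D$-feasible $4$-set of value $2a+2r>3a=\bar H_w(4)$, which is the contradiction.

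\textbf{Caps.} Here I would invoke the cap convention of Section~\ref{sec:caps}. For $K\ge4$ the remainder witness $Z$ has $3\le K-1$ items and the exchanged set has $4\le K$ items, so both remain admissible and the same contradiction goes through. Caps $K\le3$ allow at most $K-1\le2$ remainder items, which is incompatible with $b_r=3$; these are the dimensionally excluded caps mentioned in the statement.

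The only delicate point is this bookkeeping of admissible counts under a cap, specifically confirming that the $4$-item output of the exchange is permitted exactly when $K\ge4$. The inequalities themselves are routine.
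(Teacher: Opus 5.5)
Your proposal is correct and follows the paper's own proof. Like the paper, you apply Corollary~\ref{cor:PC} at $(n,m)=(3,3)$, where $\min\{H/3,(H-\rho_1)/2\}=a$ forces $r\le a/2$, note that the exchange's output count of four keeps the argument valid at every cap $K\ge4$, and note that $b_r=3$ excludes $K\le3$; your explicit unpacking of the exchange (all three members of $X$ have value exactly $a$) is a faithful elaboration of that same argument.
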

\begin{proof}
We apply the pair-complement bound at the terminal remainder count.
At $n=3$, $H_r(2)=a+r<2a=H_w(3)-H_w(1)$, so Corollary~\ref{cor:PC} applies with $H=3a$ and $\rho_1=a$. At $m=3$ it requires $3a\ge2a+2r$, contrary to $r>a/2$. This proof also applies to every admissible cap $K\ge4$ because its output count is four.
\end{proof}
The targets $(0,4,7,10),(0,3,5,7)$ and $(0,4,8,11),(0,4,6,8)$ give further examples. Their pair-complement lower bounds at count four are $31/3>10$ and $23/2>11$, respectively. In contrast, the obstruction for $(0,3,5,7),(0,1,2,3)$ uses an output of count five, so a cap of four suppresses this particular exchange.
\subsection{Full witnesses and the closure hierarchy}
\begin{lemma}[Pruning]\label{lem:prune}
Every realizable pair has a realization using at most $N^*$ items. The same statement holds with cardinality caps.
\end{lemma}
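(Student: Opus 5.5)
Starting from any realizing instance, I will restrict it to the union $E'$ of one attaining set per positive vertex. For each $n\in\mathcal V_w$ fix a $D$-feasible $n$-set $X_n$ with $G(X_n)=H_w(n)$. For each $m\in\mathcal V_r$ fix an $R$-feasible $m$-set $Z_m$ with $G(Z_m)=H_r(m)$. Such sets exist because each hull vertex is a raw point of the corresponding raw profile. Let $E'=\bigcup_n X_n\cup\bigcup_m Z_m$, and keep the same values, sizes and capacities $R<D$. Then $|E'|\le\sum_{n}n+\sum_m m=N^*$.

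Next I verify that $E'$ reproduces both hulls, using the equivalence from Section~\ref{sec:setting}: a hull is obtained exactly when every feasible $k$-set obeys $\bar H_\alpha(k)$ and every positive target vertex is attained. The bound is inherited by restriction. Every subset of $E'$ that is $D$-feasible (respectively $R$-feasible) is also a feasible subset of the original instance, so its value is at most $\bar H_w(k)$ (respectively $\bar H_r(k)$). The attainment is immediate, since the chosen $X_n$ and $Z_m$ lie in $E'$ and keep their feasibility. Two degenerate points need a brief check. First, $b_\alpha$ is still the first maximizing count: the raw profile of $E'$ never exceeds $\bar H_\alpha$, it attains $H_\alpha(b_\alpha)$ at the terminal vertex, and at every count below $b_\alpha$ it stays strictly below $H_\alpha(b_\alpha)$ because $H_\alpha$ is strictly increasing. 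Second, when $b_r=0$ there are no $Z_m$; in that case every $R$-feasible set in the original instance has value $0$, and this persists in $E'$.

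For the capped version I use the same construction, with the cap convention of Section~\ref{sec:caps}: at most $K$ items in the larger problem and $K-1$ in the remainder. The attaining sets are chosen among admissible counts, so they have at most $K$ and $K-1$ items respectively. Admissible subsets of $E'$ remain admissible in the original instance, so the same two-part verification goes through unchanged.

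I expect the main obstacle to be the careful handling of the hull equivalence rather than any counting. The count $N^*$ involves only vertices, not intermediate collinear points. I must therefore confirm that intermediate integer points on linear runs need not be attained: the least concave majorant is determined by the vertices together with the upper bounds. That is exactly the stated equivalence, which I will invoke rather than reprove.
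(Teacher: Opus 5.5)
Your proposal is correct and follows the paper's proof essentially step for step. You retain one attaining set per positive vertex, note that restriction can only lower raw profile values while keeping the chosen attainments, use strict increase to keep $b_\alpha$ as the first maximizing count, and observe that caps change nothing; your explicit check of the $b_r=0$ case is a harmless addition.
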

\begin{proof}
We retain only the items needed to attain the target vertices.
Retain one attaining set at each positive vertex of either hull and discard every other item. Deletion cannot increase any profile value and preserves every selected vertex value. Thus the upper hull is bounded above by, and contains all vertices of, the target hull. It equals the target. The target terminal value is still attained at its terminal vertex, and strict increase prevents an earlier attainment. The union of the retained sets has at most $N^*$ elements. Nothing in this argument changes when feasibility includes a cardinality cap.
\end{proof}
\begin{definition}[Full witness]\label{def:fwc}
A full witness consists of a ground set $E$ of at most $N^*$ items; nonnegative rational values $a_i$; vertex sets $X_n,Z_m\subseteq E$ of the specified cardinalities and values; and complete families $\mathcal D,\mathcal R\subseteq2^E$ having a \emph{common positive threshold representation}
\[
 \mathcal D=\{S:d(S)\le D\},\qquad
 \mathcal R=\{S:d(S)\le R\},\qquad 0\le R<D,
\]
with positive rational $d_i$. The vertex sets belong to the respective families and every member obeys its respective bound $G(S)\le\bar H_w(|S|)$ or $G(S)\le\bar H_r(|S|)$. With caps, impose the bounds and attainments only at admissible cardinalities. The full-witness closure $\mathrm{FWC}$ is the set of target pairs possessing such a witness.
\end{definition}
Although nested downward-closed families look like feasible families, this alone is insufficient: their representation by the \emph{same} item sizes is essential. A globally coherent interpretation of item identities is also essential; pairwise overlap matchings alone need not supply one.
\begin{theorem}\label{thm:fwc}
A target pair is realizable if and only if it belongs to $\mathrm{FWC}$.
\end{theorem}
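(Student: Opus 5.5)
We prove the two inclusions separately. Both rest on the equivalence stated in Section~\ref{sec:setting}: a finite instance produces the target hull $H_\alpha$ exactly when every feasible subset of each count $k$ obeys $\bar H_\alpha(k)$ and every positive target vertex has an attaining set.

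\emph{Realizable $\Rightarrow$ FWC.} Start from any realization and apply Lemma~\ref{lem:prune}. This gives a realization on a ground set $E$ of at most $N^*$ items with the same capacities $R<D$ and integer sizes. Next we check that this pruned instance is already a full witness.
\begin{itemize}
\item The families $\mathcal D=\{S:d(S)\le D\}$ and $\mathcal R=\{S:d(S)\le R\}$ have a common positive threshold representation by construction.
\item Pick one attaining set per positive vertex; these serve as $X_n$ and $Z_m$.
\item By the equivalence above, every member of $\mathcal D$ obeys $G(S)\le\bar H_w(|S|)$, and every member of $\mathcal R$ obeys $G(S)\le\bar H_r(|S|)$.
\end{itemize}
With caps, apply the same argument restricted to admissible counts, using the capped clause of Lemma~\ref{lem:prune}.

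\emph{FWC $\Rightarrow$ realizable.} Take a full witness with positive rational sizes $d_i$ and rational thresholds $0\le R<D$. We may assume $R,D$ rational; if they are arbitrary reals, replace them by rationals. This is possible because $E$ is finite: the finitely many subset sums $d(S)$ leave an open gap around each threshold, so nudging $R$ and $D$ within those gaps changes neither family, and the gaps can be chosen so that $R<D$ is preserved. Multiply all sizes and both capacities by a common denominator to obtain positive integer sizes and integer capacities $0\le R<D$, as in Section~\ref{sec:setting}. Scaling preserves $\mathcal D$ and $\mathcal R$ exactly.

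Items with $d_i>D$ lie in no member of $\mathcal D$ and hence in no vertex set, so they can be discarded without changing any feasible family restricted to the rest. Discarding also only removes subsets, so all bounds survive. Now the instance $(E,a,d,R,D)$ has the following properties:
\begin{itemize}
\item Every $D$-feasible $k$-set has value at most $\bar H_w(k)$, and every $R$-feasible $k$-set has value at most $\bar H_r(k)$.
\item Each positive vertex of each hull is attained by the corresponding $X_n$ or $Z_m$.
\end{itemize}

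It remains to conclude that the computed hulls are exactly $H_w$ and $H_r$, including the terminal index $b_\alpha$. Because $\bar H_\alpha$ is concave and majorizes the raw profile, the upper concave hull of the raw profile lies below $\bar H_\alpha$. It also passes through every vertex of $H_\alpha$, which forces equality on $\{0,\dots,b_\alpha\}$. For the terminal index, strict increase of $H_\alpha$ shows that no count below $b_\alpha$ attains the terminal value. The flat extension $\bar H_\alpha$ shows that no later count exceeds it. Hence $b_\alpha$ is the first maximizing count, as Lemma~\ref{lem:prune} argues.

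The main obstacle is only bookkeeping. First, we must confirm that the rescaling to integer data preserves both families simultaneously; this works because a single scaling factor is used for sizes and both capacities. Second, we must handle the capped convention (cap $K$ for $w$, $K-1$ for $r$) so that bounds and attainments are imposed exactly at admissible counts. Beyond that, the theorem is essentially definitional once Lemma~\ref{lem:prune} bounds the item count.
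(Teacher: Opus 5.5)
Your proof is correct and follows the paper's argument. For one direction, you prune a realization and read off its actual threshold families and attaining sets. For the converse, you scale the rational representation to integer data and use the subset bounds together with the vertex attainments to pin down both hulls and the terminal index. Your extra step rationalizing $R$ and $D$ is not needed under the paper's reading of a rational threshold representation. As phrased, ``open gap around each threshold'' is slightly off when a threshold equals a subset sum: the interval leaving the families unchanged is then half-open, so you may only move upward. This does not affect the argument.
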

\begin{proof}
We obtain one direction by pruning an instance and using its actual feasible families. For the converse, scale the rational threshold representation to integer sizes and capacities, leaving the values rational. The bounds keep every profile below its target hull, while the specified vertex sets attain all target vertices. Consequently both hulls are exactly the targets.
\end{proof}
\paragraph{Linear closure.}
Let $L$ be the set of target pairs satisfying dominance and all applicable instances of \eqref{eq:LR}, \eqref{eq:LRb} and \eqref{eq:PC}, with their stated hypotheses. These are explicit scalar tests; no witness is part of the definition of $L$. Every uncapped negative certificate on the six domains in Appendix~\ref{app:computations} fails \eqref{eq:LR} or \eqref{eq:PC}; Theorem~\ref{thm:scalar-gap} gives an exclusion outside those domains that passes both tests. The capped separation below additionally uses \eqref{eq:truncated}; dimensional cap exclusions are separate.

The full-witness condition is realizability, EC is its witness relaxation, and $L$ is the scalar relaxation. Theorem~\ref{thm:fwc}, Theorem~\ref{thm:ec} and Corollaries~\ref{cor:LR}--\ref{cor:PC} give
\[
 \boxed{\quad\text{realizable}=\mathrm{FWC}\ \subseteq\ \mathrm{EC}\ \subseteq\ L.\quad}
\]
Theorem~\ref{thm:scalar-gap} and Remark~\ref{rem:scalar-gap-ec} show that $L$ strictly contains EC already when $b_w=3$. Whether EC equals FWC in that regime remains open. Completion of a \emph{specified} globally coherent witness is a stronger demand than existence of some full witness for its target.
\begin{figure}[t]
\centering
\begingroup
\small
\newcommand{\panel}[2]{%
  \fbox{\begin{minipage}[t][][t]{0.46\textwidth}
    \textbf{#1}\par\vspace{4pt}#2
  \end{minipage}}}
\setlength{\tabcolsep}{2pt}
\begin{tabular}{@{}ll@{}}
\panel{(a) A target that fails long removal}{%
  \centering
  \begin{tikzpicture}[x=.62cm,y=.30cm,font=\scriptsize,>=Latex]
    \draw[->] (0,0)--(6.4,0) node[right] {$n$};
    \draw[->] (0,0)--(0,8.6) node[above left] {$H$};
    \foreach \n in {1,2,3,4,5}{\draw (\n,.13)--(\n,-.13) node[below] {\n};}
    \draw[thick] (0,0)--(1,3)--(2,5)--(3,7)--(5.6,7);
    \draw[dashed,thick] (0,0)--(1,1)--(2,2)--(3,3)--(5.6,3);
    \node[anchor=west] at (5.6,7) {$\bar H_w$};
    \node[anchor=west] at (5.6,3) {$\bar H_r$};
    \draw[dotted] (5,0)--(5,7.667);
    \fill (5,7.667) circle (1.5pt);
    \node[anchor=south east] at (5.1,7.9) {forced $23/3$};
  \end{tikzpicture}\par\vspace{2pt}}
&
\panel{(b) Remove one long item}{%
  \centering
  \fbox{$X\setminus\{x\}$}\quad$\cup$\quad\fbox{$Z\setminus X$}\par\vspace{5pt}
  \raggedright
  $d_x>R\ge d(Z)$ implies the union fits $D$.\par\vspace{3pt}
  Count: $n-1+m-|X\cap Z|$.\par\vspace{3pt}
  Value: $H_w(n)-a_x+H_r(m)-G(X\cap Z)$.\par\vspace{5pt}
  With a cap, this exchange is retained only when its actual output count is admissible.}
\\\noalign{\vskip10pt}
\panel{(c) Two overlap cases, one bound}{%
  No pair of $X$ fits $R$, so $|X\cap Z|\le1$.\par\vspace{4pt}
  $X\cap Z=\varnothing$: $\max_{y\in X\setminus Z}a_y\ge H/n$.\par\vspace{4pt}
  $X\cap Z=\{z\}$: $\max_{y\in X\setminus Z}a_y\ge(H-\rho_1)/(n-1)$.\par\vspace{4pt}
  In either case $\{y\}\cup Z$ fits $D$, which gives~\eqref{eq:PC}.}
&
\panel{(d) A balanced trade blocks completion}{%
  Required fits: $d_a+d_b+d_c\le D$ and $d_u+d_v\le R$.\par\vspace{4pt}
  Value-forced nonfits:\par $d_b+d_c+d_u>R$ and $d_a+d_v>D$.\par\vspace{4pt}
  Both sides use $a,b,c,u,v$ exactly once, so their sums would give $D+R<D+R$.}
\\
\end{tabular}
\endgroup
\space
\caption{The common-size restrictions are stronger than numerical dominance. (a) For $(0,3,5,7),(0,1,2,3)$, long removal at count five forces $23/3>7$; a cap of four suppresses that exchange. (b) Long removal accounts explicitly for overlap. (c) Pair exclusion bounds the best nonoverlapped item, whether the overlap is empty or a singleton. (d) A balanced incidence trade obstructs completion of a fixed witness while leaving target existence a separate question.}
\label{fig:exchange}
\end{figure}
\FloatBarrier
\section{Where the chain collapses}\label{sec:construct}
We begin with a regime where the scalar tests are sufficient, rather than just necessary. Beyond this regime, the subsequent constructions give a coverage map for $b_w=3$, but do not exhaust the rational targets. Unless stated otherwise, all instances here are uncapped. An instance with $N$ items remains valid at every $K\ge N+1$, which makes both caps inactive.
\begin{theorem}[Terminal count at most two]\label{thm:bw2}
For $b_w\le2$, $L=\mathrm{EC}=\mathrm{FWC}$: a pair is realizable if and only if it satisfies dominance and the all-long bounds of Corollary~\ref{cor:LR}. For $b_w=2$ the only additional restriction is
\[
 h_2>\rho_1\quad\Longrightarrow\quad
 H_r(b_r)\le\tfrac12 H_w(2).
\]
\end{theorem}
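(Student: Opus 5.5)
\noindent We prove the two inclusions separately. \emph{Necessity} is the chain $\mathrm{FWC}\subseteq\mathrm{EC}\subseteq L$ already established after Theorem~\ref{thm:fwc}. So it suffices to show that every pair in $L$ with $b_w\le2$ is realizable, by writing down an explicit instance in each case. We first reduce the $L$-conditions for $b_w\le2$ to the displayed form. \eqref{eq:LRb} and \eqref{eq:PC} need $n\ge3$ and are vacuous. \eqref{eq:LR} at $n=1$ reads $\bar H_w(m)\ge H_r(m)$, which is dominance. At $n=2$ it reads $H_w(2)\ge \tfrac12H_w(2)+H_r(m)$ for all $m\le b_r$. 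By monotonicity of $H_r$ this is exactly $H_r(b_r)\le\tfrac12H_w(2)$, required when $h_2>\rho_1$. This gives the stated form of $L$. After that, only the constructions remain, each checked through the equivalence of Section~\ref{sec:setting}: every feasible $k$-set must obey $\bar H_\alpha(k)$, and every positive vertex must be attained.

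\emph{Cases $b_w\le1$.} If $b_w=0$, dominance forces $H_r=(0)$, and the empty instance works. If $H_w=(0,h)$, dominance gives $H_r(k)\le h$, so $b_r\le1$ since $H_r$ is strictly increasing. We then use, as needed, one long item of value $h$ and size $D$ together with one short item of value $\rho_1$ and size $R$, taking $R=1$, $D=2$. If $\rho_1=h$, the long item is dropped. No pair fits $D$, so every bound holds.

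\emph{Case $b_w=2$, $h_2>\rho_1$.} Put $H=H_w(2)$. We use four kinds of items:
\begin{itemize}
\item one long item of value $h_1$ and size $D$;
\item two long items, each of value $H/2\le h_1$ and size $D/2$;
\item short items of values $\rho_1,\dots,\rho_{b_r}$, each of size $1$;
\item capacities $R=b_r$ and $D>2R$, rescaled to integers.
\end{itemize}
At $R$ only the unit items fit, and the best $k$ of them are worth exactly $H_r(k)$ by concavity. At $D$, a set containing the size-$D$ item is that item alone. The two half items fill $D$ exactly and attain $H$. One half item together with shorts is worth at most $H/2+H_r(b_r)\le H$; this is precisely where the hypothesis $H_r(b_r)\le\tfrac12 H_w(2)$ enters. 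A set of shorts alone is worth at most $H_r(k)\le\bar H_w(k)$ by dominance. Hence every $D$-feasible $k$-set obeys $\bar H_w(k)$, and both vertices $1$ and $2$ are attained.

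\emph{Case $b_w=2$, $h_2\le\rho_1$ (main obstacle).} Here $L$ imposes only dominance, so we need $H_w(2)=h_1+h_2$ to be attained without any $D$-feasible set exceeding $\bar H_w$. The construction I would try first keeps the unit-style short items realizing $H_r$ at capacity $R$. It adds a long item $\ell$ of value $h_1$ whose size leaves residual room $\sigma=D-d_\ell$, and gives the short items sizes that increase with their value. Then $\ell$ can be combined only with shorts of small value, and we need $\ell$ together with some short worth exactly $h_2$ to fit. The delicate points are the following.
\begin{itemize}
\item A short item of value exactly $h_2$ must exist without raising $H_r$. This is automatic when $h_2$ equals some $\rho_j$. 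Otherwise I would add an extra item of value $h_2$ whose size exceeds $R$ but not $\sigma$, which makes it long. This avoids the $R$-profile entirely, at the price of checking that it cannot join more than one other item at $D$.
\item Sets of $\ge3$ items that are $D$-feasible must stay below $H_w(2)$, which dominance $H_r(b_r)\le H_w(2)$ should give when the only large-count $D$-feasible sets are short-only.
\end{itemize}
Choosing $D$ just large enough that $\ell$ blocks all but one small item, and shorts alone fit only up to the count $b_r$, is the step I expect to require the most care.
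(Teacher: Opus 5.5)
Your necessity direction, your reduction of $L$ for $b_w\le2$, and your instance for $b_w=2$, $h_2>\rho_1$ (a solo of size $D$ plus two half-items of size $D/2>R$) are correct and agree with the paper.

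\textbf{The case $h_2\le\rho_1$.} Here there is a genuine gap. You leave this case as a plan, and the fallback you propose cannot work. Suppose the extra item $e$ of value $h_2$ has $R<d_e\le\sigma=D-d_\ell$. Then $D-d_\ell>R$, so $\ell$ fits at $D$ together with an entire $R$-attaining $b_r$-set $Z$. This is Lemma~\ref{lem:exchange}(a) applied to $X=\{\ell,e\}$ with $x=e$. That set has count $b_r+1\ge2$ and value $h_1+H_r(b_r)$. The bound $\bar H_w(b_r+1)=h_1+h_2$ then forces $\rho_1\le H_r(b_r)\le h_2\le\rho_1$, i.e.\ $b_r=1$ and $h_2=\rho_1$. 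That is exactly the subcase the fallback was meant to avoid. In particular, your expectation that $\ell$ combines ``only with shorts of small value'' is incompatible with $\sigma>R$.

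Even when $h_2=\rho_j$, sizes that merely increase with value do not stop several cheap shorts from fitting beside $\ell$ with total value above $h_2$. You would need something like sizes proportional to values, and you would still have nothing for $h_2\notin\{\rho_j\}$.

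\textbf{The missing idea.} Do not attain $H_w(2)$ as $h_1+h_2$ at all. Since $0<h_2\le\rho_1$ forces $b_r\ge1$, the paper takes:
\begin{itemize}
\item $b_r$ unit shorts of values $\rho_j$, with $R=b_r$ and $D=b_r+5$;
\item a solo item $(h_1,D)$;
\item a second long item $(H_w(2)-\rho_1,\,D-1)$.
\end{itemize}
The second long item has value in $(0,h_1]$ because $0<h_2\le\rho_1\le h_1$, the last inequality by dominance. It admits at most one unit short, and with the best short it attains $H_w(2)$. Every pair is then at most $H_w(2)-\rho_1+\rho_1$, no set of three or more items contains a long item, and short-only sets obey dominance.

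\textbf{The case $b_w=1$.} This case contains a false step. Dominance gives only $H_r(k)\le h$ for $k\le b_r$, and strict increase does not force $b_r\le1$. For example, $H_w=(0,5)$, $H_r=(0,3,4)$ satisfies dominance with $b_r=2$, and your instance with $R=1$, $D=2$ and a single short cannot produce it. The repair is routine. Take $b_r$ unit shorts of values $\rho_j$, $R=\max\{b_r,1\}$, and one long item of value $h$ and size $D=R+1$. The long item fits with nothing else, and short-only sets obey dominance.
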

\begin{proof}
We treat the two possible terminal counts, then split according to whether the last larger slope exceeds the best short value.
Necessity follows from the exchange inequalities, using the terminal remainder count in \eqref{eq:LR}. If $b_w=0$, dominance forces $b_r=0$ and the empty instance suffices. If $b_w=1$, take the unit-size short items for $H_r$, with $R=\max\{b_r,1\}$, and a single long item of value $H_w(1)$ and size $D=R+1$. It fits with no other item, and dominance bounds every short-only set.

Suppose $b_w=2$. When $h_2\le\rho_1$, take $b_r$ unit-size items of values $\rho_j$, set $R=\max\{b_r,1\}$ and $D=b_r+5$, and add $(h_1,D)$ and $(H_w(2)-\rho_1,D-1)$. The latter item has positive value at most $h_1$ and fits with at most one short item; with the best short it attains $H_w(2)$. The two long items cannot fit together. Short-only sets obey dominance, so these items realize the targets. Otherwise take the unit-size short items, set $R=\max\{b_r,1\}$, put $s=R+1$ and $D=2s$, and add two items $(H_w(2)/2,s)$ and a solo item $(h_1,D)$. The two equal-valued items fill $D$ and attain $H_w(2)$. The solo attains $h_1$, which is at least $H_w(2)/2$ by concavity. Every other set consists of short items, or one equal-valued item with short items; its value is at most $H_w(2)/2+H_r(b_r)\le H_w(2)$. Singletons and pairs obey their respective bounds, so the hull is exactly the target. The case $b_r=0$ is included by taking no short items.
\end{proof}

For terminal count at most two, the scalar tests therefore settle the inverse problem. When $b_w=2$ and $h_2>\rho_1$, the extra restriction is just \eqref{eq:LR} at $n=2$ and the terminal remainder count; no additional named inequality is needed.

\begin{theorem}[A: free mixing; B: exclusive long items]\label{thm:AB}
A target pair satisfying dominance is realizable under either of the following conditions:
\begin{enumerate}[label=\textup{(\Alph*)}]
\item $b_r\le b_w$ and the multiset of remainder slopes is a submultiset of the larger slopes.
\item $b_r\ge b_w-1$ and $h_n\le\rho_{n-1}$ for $2\le n\le b_w$.
\end{enumerate}
\end{theorem}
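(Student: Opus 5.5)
Both parts are proved by explicit constructions. Short items get unit size, so the remainder problem reduces to choosing the most valuable short items. Long items are sized so that the larger problem behaves as prescribed. By the equivalence recalled in Section~\ref{sec:setting}, it suffices to check two things in each construction: every $D$-feasible (resp.\ $R$-feasible) $k$-set obeys $\bar H_w(k)$ (resp.\ $\bar H_r(k)$), and every positive target vertex is attained. Strict increase of the targets then places the first maximizer at the correct terminal count. The degenerate cases $b_w=0$ or $b_r=0$ are handled by taking no long items or no short items, respectively, and setting $R=0$ where needed.

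\emph{Part (A).} Create one item per remainder slope, of value $\rho_j$ and size $1$, for $1\le j\le b_r$. Let $R=b_r$. Because the remainder slopes form a submultiset of the larger slopes, the complementary submultiset has $b_w-b_r$ members. For each such slope create one item of that value and size $R+1$. Set $D=b_r+(b_w-b_r)(R+1)$.

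At $D$, every subset fits, and the multiset of all item values equals the multiset of larger slopes. Since those slopes are nonincreasing by concavity, the best $n$-set is formed by the $n$ largest values, so $U^w(n)=H_w(n)$ for $n\le b_w$. All values are positive, so the maximum is first attained at count $b_w$.

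At $R$, the long items are excluded. Exactly the $b_r$ unit-size short items remain, and their best $m$-subsets give $H_r(m)$. Dominance is not even needed for this part. The only check is that the short values do not exceed $\rho_1$, which holds because $\rho_1$ is the largest remainder slope.

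\emph{Part (B).} Again create the $b_r$ unit-size short items of values $\rho_j$, with $R=\max\{b_r,1\}$. For each $1\le n\le b_w$ add a long item $L_n$ of value $c_n=H_w(n)-H_r(n-1)$ and size $D-(n-1)$, and take $D=3R+3$. The steps are as follows.

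\begin{enumerate}
\item \emph{Feasibility of the values.} Dominance gives $c_n\ge0$, since $n-1\le b_w-1\le b_r$.
\item \emph{The long items are mutually exclusive and genuinely long.} Each size exceeds $D-R-1>R$, and any two sizes sum to more than $D$.
\item \emph{Each $L_n$ holds exactly enough short items.} Item $L_n$ fits together with at most $n-1$ short items. Together with the best $n-1$ of them it attains $H_w(n)$ at count $n$. The hypothesis $b_r\ge b_w-1$ guarantees that those short items exist.
\item \emph{Sets without a long item.} A $D$-feasible set of $k$ short items is worth at most $H_r(\min\{k,b_r\})$. By dominance with the flat extension, this is at most $\bar H_w(k)$, which also covers counts beyond $b_w$.
\end{enumerate}

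The main obstacle is a mixed set consisting of $L_n$ and $j\le n-1$ short items. Its value is at most $c_n+H_r(j)$, and we need this to be at most $H_w(j+1)$. Rearranging, the required inequality is
\[
\sum_{k=j+2}^{n}h_k\;\le\;\sum_{k=j+1}^{n-1}\rho_k .
\]
This holds termwise by the hypothesis $h_k\le\rho_{k-1}$, after pairing $h_k$ with $\rho_{k-1}$; both sums have $n-j-1$ terms. Hence every $D$-feasible set respects $\bar H_w$.

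At $R$ only the short items fit, so the remainder hull is exactly $H_r$. The larger hull has all vertices attained and no raw value above it. Since every count $n\le b_w$ is attained by $L_n$ plus $n-1$ short items, and no set of count above $b_w$ exceeds $H_w(b_w)$, the first maximizer is $b_w$. This completes the realization.
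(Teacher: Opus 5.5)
This is essentially the paper's proof. You use the same unit-size short items, long items of size $R+1$ in (A), and mutually exclusive long items of value $H_w(n)-H_r(n-1)$ and size $D-(n-1)$ in (B). Your termwise pairing of $h_k$ with $\rho_{k-1}$ is the paper's observation that these long values are nonincreasing, written out as a telescoping sum.

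There is one boundary slip. In (A), your capacity $D=b_r+(b_w-b_r)(R+1)$ equals $R$ whenever $b_w=b_r$, including $b_w=b_r=0$, and this violates the standing requirement $R<D$. Replacing it by $D+1$ fixes this without changing any of your checks. The paper instead takes $D=R+b_w(R+1)$ with $R=\max\{b_r,1\}$, and handles $b_w=0$ by the empty instance with $D=1$, $R=0$.
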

\begin{proof}
We give separate size assignments for unrestricted mixing and for exclusive long items.
If $b_w=0$, dominance forces $b_r=0$; use the empty instance with $D=1,R=0$. Assume henceforth $b_w\ge1$.
For A, take $b_r$ unit-size short items of values $\rho_j$ and set $R=\max\{b_r,1\}$. For each unused occurrence in the multiset of larger slopes, take an item of that value and size $R+1$. Set $D=R+b_w(R+1)$. Every item fits together in $D$, whereas precisely the short items fit $R$, where they all fit together. Selecting the largest available values gives the two prescribed profiles.

For B, take the same unit-size short items and $R=\max\{b_r,1\}$, set $D=2b_w+b_r+1$, and add, for each $n=1,\ldots,b_w$, an item
\[
 \ell_n=\bigl(\beta_n,D-(n-1)\bigr),\qquad
 \beta_n=H_w(n)-H_r(n-1).
\]
Dominance and strict increase imply $\beta_n\ge h_n>0$. The slope hypothesis makes $\beta_n$ nonincreasing. Every $\ell_n$ is long, no two such items fit $D$, and $\ell_n$ admits at most $n-1$ unit-size items. A feasible $k$-set containing a long item therefore uses some $n\ge k$ and is worth at most
$\beta_n+H_r(k-1)\le\beta_k+H_r(k-1)=H_w(k)$.
Equality is attained by $\ell_k$ and the best $k-1$ short items, which exist because $b_r\ge b_w-1$. No set with a long item has count greater than $b_w$. Short-only sets obey dominance at every count. Hence both profiles have the required hulls.
\end{proof}

\subsection{A coverage map for terminal count three}
We use two further constructions proved in Appendix~\ref{app:templates}. T1 uses a triple, a partner and a solo item when an interval for the parameter $\lambda_1$ of Theorem~\ref{thm:T1} is nonempty. Within the cone $h_3>\rho_1$, dominance and the all-long bounds suffice when $2$ is not a $w$-vertex or $2H_r(b_r)\le h_1+h_3$ (Corollary~\ref{cor:T1-region}). T2 uses two long items and the best short within $h_3\le\rho_1$, subject to its pair-attainment conditions (Theorem~\ref{thm:T2}). They complement free mixing (A) and exclusive long items (B).

Table~\ref{tab:coverage} lists, by $w$-vertex set and $b_r$, the cells that retain residuals, with totals over all cells. ``Base'' counts the union of A, B, T1 and T2; the Applicable base column lists constructions that apply to at least one realizable target in that cell, not a universal condition on the cell. Residuals retain this definition even after another exact instance resolves them. The finite scalar characterization is stated in Proposition~\ref{prop:scalarfinite}.
\begin{center}
\small
\setlength{\tabcolsep}{4pt}\begin{longtable}{clclrrrrrr}
\caption{Coverage cells with residuals, by slope bound $B$ of the box, positive vertex set and remainder terminal count; totals are over all $16$ cells of each box. The $13$ cells per box without residuals are listed in \texttt{coverage\_by\_vertex.csv}. Targets are the realizable pairs in the dominated box; ``base'' means covered by at least one of A, B, T1 and T2. The last three columns partition the base residuals. Full row-level data also retain the dominated counts, from which the excluded counts follow by subtraction.}\label{tab:coverage}\\
\toprule
$B$ & $\operatorname{vert}^{+}(H_w)$ & $b_r$ & Applicable base & Targets & Base & Residual & T5 & T6 & Other\\
\midrule
\endfirsthead
\toprule
$B$ & $\operatorname{vert}^{+}(H_w)$ & $b_r$ & Applicable base & Targets & Base & Residual & T5 & T6 & Other\\
\midrule
\endhead
5 & $\{1,3\}$ & 3 & A, B, T1, T2 & 113 & 108 & 5 & 4 & 0 & 1\\
5 & $\{2,3\}$ & 3 & A, B, T1, T2 & 134 & 133 & 1 & 0 & 0 & 1\\
5 & $\{1,2,3\}$ & 3 & A, B, T1, T2 & 164 & 162 & 2 & 1 & 0 & 1\\
\midrule
6 & $\{1,3\}$ & 3 & A, B, T1, T2 & 252 & 238 & 14 & 8 & 0 & 6\\
6 & $\{2,3\}$ & 3 & A, B, T1, T2 & 288 & 284 & 4 & 0 & 0 & 4\\
6 & $\{1,2,3\}$ & 3 & A, B, T1, T2 & 488 & 479 & 9 & 5 & 0 & 4\\
\midrule
5 & \multicolumn{3}{l}{All $16$ cells} & 933 & 925 & 8 & 5 & 0 & 3\\
6 & \multicolumn{3}{l}{All $16$ cells} & 2086 & 2059 & 27 & 13 & 0 & 14\\
\bottomrule
\end{longtable}
\end{center}
The predicates for the base constructions are given in Theorem~\ref{thm:AB} for A and B and in Appendix~\ref{app:templates} for T1 and T2. The labels T5 and T6 match the column names of the archive; there are no constructions T3 or T4. The machine-readable version of Table~\ref{tab:coverage} retains all applicable base constructions in each cell, rather than assigning a pair to a unique construction. Thus the base count is a union, not a sum of construction counts.

Although all residuals in both displayed boxes have $b_r=3$, they do \emph{not} have one common $w$-vertex type. At slope bound five their distribution over $\{1,3\}$, $\{2,3\}$ and $\{1,2,3\}$ is $5,1,2$; at bound six it is $14,4,9$. There are no residuals of type $\{3\}$ in these boxes. T5 closes respectively five and thirteen residuals. The remaining three and fourteen have other exact instances. These statements are finite coverage facts, not a rational classification.

T6 has $b_r=2$ and, when $b<a$, $w$-vertex type $\{2,3\}$; its rational gap is distinct from the $b_r=3$ residual cells in these integer boxes. If $b=a$, its $w$-hull is linear and has vertex type $\{3\}$. Thus the table does not confine the rational problem to one vertex type or one value of $b_r$.

\subsection{Two size patterns beyond the base cover}

\begin{theorem}[T5: an isolated best short]\label{thm:T5}
Let $A\ge p\ge q\ge r>0$ be rational, and assume $p+q+r>A$. The four items
\[
 (A,3),(p,1),(q,1),(r,1),\qquad D=5,\ R=3,
\]
have raw $r$-profile $(0,A,p+q,p+q+r)$. Only its count-two point can fall below the hull. Replacing that point by its hull value, the items realize
\begin{align*}
 H_w&=(0,A,A+p,A+p+q),\\
 H_r&=\left(0,A,\max\left\{p+q,\frac{A+p+q+r}{2}\right\},p+q+r\right).
\end{align*}
\end{theorem}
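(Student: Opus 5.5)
The plan is to enumerate every feasible subset of the four items directly at both capacities, then pass from the raw profiles to their upper concave hulls using the equivalence recorded in Section~\ref{sec:setting}: a hull is obtained exactly when every feasible $k$-set obeys $\bar H_\alpha(k)$ and each positive vertex has an attaining set. I will write the big item as $g=(A,3)$ and the unit items as $u_p,u_q,u_r$. Since $R=3$, the item $g$ is short.

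\emph{Remainder profile.} At $R=3$ every singleton fits, so $U^r(1)=A$ because $A\ge p$. A feasible pair cannot contain $g$, since its size would be at least $4$. Hence the feasible pairs are pairs of units, and $U^r(2)=p+q$. The only feasible triple is the three units, so $U^r(3)=p+q+r$, and no set of four items fits.

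Next I compare with the upper hull. The points at counts $0$, $1$ and $3$ are hull candidates, so only the count-two point can lie strictly below the chord from $(1,A)$ to $(3,p+q+r)$. That chord has value $(A+p+q+r)/2$ at count two, which gives the displayed $\max$.

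I then check that the resulting $H_r$ is a valid target.
\begin{itemize}
\item If $p+q\ge(A+p+q+r)/2$, the slopes are $A$, $p+q-A$ and $r$. The case hypothesis is equivalent to $p+q-A\ge r>0$. Moreover $p+q-A\le A$ because $p,q\le A$.
\item Otherwise the hull is linear on $[1,3]$ with slope $(p+q+r-A)/2$. This slope is positive by the hypothesis $p+q+r>A$, and it is less than $A$.
\end{itemize}
So $H_r$ is strictly increasing and concave, and $b_r=3$. Indeed, $p+q+r$ exceeds $A$ (by hypothesis) and exceeds $p+q$; the latter shows the maximum is first reached at count three. Every vertex of $H_r$ is a raw point, so it is attained, and every raw point lies on or below the hull.

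\emph{Larger profile.} At $D=5$ I enumerate by count.
\begin{itemize}
\item For singletons the best is $A$.
\item For pairs, $g$ together with one unit has size $4$ and value at most $A+p$; two units give at most $p+q\le A+p$.
\item For triples, $g$ with two units has size $5$ and value at most $A+p+q$; the three units give $p+q+r\le A+p+q$ since $r\le A$.
\item Any four-set has size $6$, so it is infeasible.
\end{itemize}
The raw profile is therefore $(0,A,A+p,A+p+q)$. Its slopes $A\ge p\ge q>0$ make it already concave and strictly increasing, so the raw profile is the hull and $b_w=3$.

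The computations are routine. The step needing the most care is the remainder hull: I must confirm, in both cases of the $\max$, that the result is a legitimate strictly increasing concave target with terminal count three. This uses $p+q+r>A$ and the ordering $A\ge p\ge q\ge r$ exactly as above. Dominance does not need a separate check, since it follows from realization.
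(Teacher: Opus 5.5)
Your proposal is correct and follows essentially the same route as the paper: you enumerate the feasible sets at $R=3$ and $D=5$, observe that only the count-two remainder point can fall below the chord from $(1,A)$ to $(3,p+q+r)$, and note that the larger raw profile $(0,A,A+p,A+p+q)$ is already concave and strictly increasing. Your explicit two-case concavity check of $H_r$ does the job of the paper's remark that $(1,A)$ lies on or above every chord from the origin, so nothing is missing.
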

\begin{proof}
We isolate the best short item and enumerate the feasible mixed types.
The size-three item fits alone in $R$ and with no other item. All three size-one items fit together in $R$. Thus its raw count profile is $(0,A,p+q,p+q+r)$. Since $A\ge p\ge q\ge r$, the first positive point lies on or above every chord from the origin to a later point. Only the count-two point may lie below its upper hull; replacing it by the larger of its raw value and the chord from counts one to three gives the displayed $H_r$. The assumption $p+q+r>A$, together with $r>0$, makes the maximum first attained at count three.

In $D$, the distinguished item fits with any two ordinary items, but the full four-set has size six. The best pair is worth $A+p$ and the best triple $A+p+q$, because $A\ge p\ge q\ge r$. Its raw profile is already concave and strictly increasing, so equals $H_w$.
\end{proof}
T5 accounts for the entries of the T5 column of Table~\ref{tab:coverage}. Its equal-first-value condition excludes the two bound-five residuals with $H_w(1)\ne H_r(1)$, and it does not cover every equal-first-value residual either.
\begin{theorem}[T6: asymmetric short sizes]\label{thm:T6}
Let rational parameters satisfy
\[
 a\ge b>\frac{2a}{3},\qquad u\ge v>0,\qquad u+v\le b,
 \qquad a+u\le2b.
\]
Then
\[
 H_w=(0,a,2a,3b),\qquad H_r=(0,u,u+v)
\]
is realized by three copies of $(b,4)$, two copies of $(a,6)$, and the two items $(u,1),(v,2)$, with $D=12$ and $R=3$.
\end{theorem}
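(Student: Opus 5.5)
The plan is to check the instance directly. By the equivalence stated in Section~\ref{sec:setting}, it suffices to show two things for each capacity. First, every feasible $k$-set obeys $\bar H_\alpha(k)$. Second, every positive target vertex is attained by some feasible set. I will also confirm that both targets are admissible: each is strictly increasing, concave, has the stated terminal count, and has $b_\alpha$ equal to the first maximizing count.

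\emph{Remainder problem.} At $R=3$ the only items of size at most three are $(u,1)$ and $(v,2)$. These two fit together, since $1+2=3$. So the $R$-feasible sets are $\varnothing$, $\{u\}$, $\{v\}$ and $\{u,v\}$, and the raw profile is $(0,u,u+v)$. Because $u\ge v>0$, this profile is strictly increasing and concave. Hence it equals $H_r$, with $b_r=2$.

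\emph{Larger problem.} At $D=12$ I will enumerate the feasible sets by type, writing $i$ for the number of $(a,6)$ items and $j$ for the number of $(b,4)$ items.
\begin{itemize}
\item $i=2$: the two items already have size $12$, so nothing can be added. The only set has count two and value $2a$.
\item $i=1$: we need $j\le1$.
\begin{itemize}
\item With $j=1$ the size is $10$, leaving room $2$. This admits exactly one of the shorts, giving $a+b+u$ or $a+b+v$ at count three; it cannot admit both.
\item With $j=0$ the best set is $a+u+v$ at count three.
\end{itemize}
\item $i=0$: we need $j\le3$.
\begin{itemize}
\item $j=3$ fills $D$ exactly, giving count three and value $3b$.
\item $j=2$ has size $8$ and admits both shorts, giving count four and value $2b+u+v$.
\item Smaller $j$ give dominated sets.
\end{itemize}
\end{itemize}
No feasible set has count five or more. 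The raw maxima are therefore as follows.
\begin{itemize}
\item $U^w(1)=a$.
\item $U^w(2)=\max\{2a,\,a+b,\,a+u,\dots\}=2a$, using $b\le a$.
\item $U^w(3)=\max\{3b,\,a+b+u,\,2b+u,\,a+u+v\}=3b$. This uses $a+u\le 2b$ and $u+v\le b\le a$.
\item $U^w(4)=2b+u+v\le 3b$, using $u+v\le b$.
\end{itemize}

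For the hull, the slopes of $(0,a,2a,3b)$ are $a,a,3b-2a$. Concavity requires $3b-2a\le a$, which is $b\le a$. Strict increase requires $3b-2a>0$, which is $b>2a/3$. Thus the raw points at counts one to three already form a concave, strictly increasing sequence. Since $U^w(4)\le 3b$, the first maximizing count is $b_w=3$, and the hull equals $H_w$.

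Each target vertex is attained by a feasible set:
\begin{itemize}
\item count three by the three copies of $(b,4)$;
\item count two by the two copies of $(a,6)$, which matters when $b<a$ and $2$ is a vertex;
\item count one by a single $(a,6)$.
\end{itemize}
Finally, dominance $H_r(k)\le \bar H_w(k)$ follows from $u\le b\le a$ and $u+v\le b\le 2a$.

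The main obstacle is making the $D$-enumeration genuinely exhaustive. In particular, I must rule out $a+b+u+v$ (size $13$) and $3b+u$ (size $13$) as infeasible. This is exactly where the sizes $4$, $6$, $1$, $2$ and $D=12$ were tuned. Each of the hypotheses $a+u\le 2b$, $u+v\le b$ and $b>2a/3$ is then used precisely once: the first to bound count three, the second to bound count four, and the third for strict increase of the hull.
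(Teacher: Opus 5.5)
Your proposal is correct and takes the paper's route: the two short items are the only $R$-feasible ones. You then enumerate the $D$-feasible types exhaustively (the paper organizes by cardinality, you by the numbers of size-six and size-four items), bound every count by $3b$ using $a+u\le 2b$ and $u+v\le b$, note that no five-set fits, and check that $2a<3b\le 3a$ makes $(0,a,2a,3b)$ strictly increasing and concave. One small inaccuracy: your closing remark that each hypothesis is used exactly once is off, since $u+v\le b$ also enters your count-three bound, but this does not affect the argument.
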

\begin{proof}
We first check the feasible types and then locate their upper hulls.
Only the last two items fit $R$, where they fit together. In $D$, the singleton of value $a$, the pair of value $2a$, and the triple of value $3b$ give the required attainments. Every singleton is worth at most $a$, and every pair at most $2a$, because $v\le u\le b\le a$; here $u\le b$ follows from $a+u\le2b$.

For triples, two size-six items leave no room for another item. A size-six item with two size-four items does not fit. A size-six item with one size-four item and one short is worth at most $a+b+u\le3b$. Two size-four items and one short are worth at most $2b+u\le3b$. One long with both shorts is worth at most $a+u+v\le a+b<3b$, since $a<3b/2<2b$. The triple of size-four items fills the knapsack and is worth exactly $3b$.

The only feasible four-item type consists of two size-four items and both shorts: its size is $11$ and its value is $2b+u+v\le3b$. In particular, a size-six item and a size-four item admit either short separately but not both, since $6+4+1+2=13>12$. No five-set fits. Thus no later count exceeds $3b$. The inequalities $2a<3b\le3a$ make the claimed $w$-profile increasing and concave.
\end{proof}
\begin{corollary}[An infinite gap in the cover by A, B, T1, T2 and T5]\label{cor:T6-gap}
For every rational $v$ with $3/2<v\le5/3$, the pair
\[
 (0,5,10,14),\qquad (0,3,3+v)
\]
is realizable by T6 but satisfies none of the hypotheses of A, B, T1, T2, or T5.
\end{corollary}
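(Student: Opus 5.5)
The plan is to prove the two parts of the statement separately. Realizability will come from Theorem~\ref{thm:T6} with explicit parameters. The failure of each earlier construction will be checked predicate by predicate, and most of those checks fail at a single scalar comparison.

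\emph{Realizability.} I would apply Theorem~\ref{thm:T6} with $a=5$, $b=14/3$, $u=3$ and the given $v$. Then $3b=14$, so $(0,a,2a,3b)=(0,5,10,14)$ and $(0,u,u+v)=(0,3,3+v)$. The hypotheses check as follows:
\begin{itemize}
\item $a\ge b>2a/3$ is $5\ge 14/3>10/3$;
\item $u\ge v>0$ holds because $v\le5/3<3$;
\item $u+v\le b$ is $3+v\le14/3$, i.e.\ $v\le5/3$;
\item $a+u\le2b$ is $8\le28/3$.
\end{itemize}
So T6 realizes the pair, after scaling the rational value $b$ if integrality is wanted. Only the upper endpoint $v\le5/3$ is used here; the lower endpoint matters only for the exclusions.

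\emph{Exclusion of A, B, T2 and T5.} The relevant slopes are $h=(5,5,4)$, $\rho=(3,v)$, $b_w=3$ and $b_r=2$.
\begin{itemize}
\item A fails because $3$ is not among the larger slopes $\{5,5,4\}$, so the remainder slopes are not a submultiset of them.
\item B fails because its slope hypothesis at $n=2$ would require $h_2=5\le\rho_1=3$.
\item T2 lives in the cone $h_3\le\rho_1$ (Theorem~\ref{thm:T2}), but here $h_3=4>3=\rho_1$.
\item T5 fails because every pair it produces has $H_w(1)=H_r(1)$ and $b_r=3$, while here $H_w(1)=5\ne3=H_r(1)$ and $b_r=2$.
\end{itemize}

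\emph{Exclusion of T1 (main obstacle).} The pair lies in the cone $h_3>\rho_1$, and $2$ is a $w$-vertex because $h_2=5\ne4=h_3$. The sufficient region of Corollary~\ref{cor:T1-region} would need $2H_r(b_r)=6+2v\le h_1+h_3=9$, i.e.\ $v\le3/2$, which fails since $v>3/2$. This explains the lower endpoint, but it is not enough: the corollary is only a sufficient region, and the statement asks that the hypothesis of Theorem~\ref{thm:T1} itself fail. I would therefore write out the interval for $\lambda_1$ from Theorem~\ref{thm:T1} with $H_w=(0,5,10,14)$ and $H_r=(0,3,3+v)$ substituted. The aim is to show that its lower endpoint exceeds its upper endpoint exactly when $v>3/2$. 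I expect the binding pair of endpoints to be the one coming from the triple-plus-remainder constraint, which should reproduce the inequality $2H_r(b_r)\le h_1+h_3$. If some other constraint binds, I would compare each endpoint as an affine function of $v$ on $(3/2,5/3]$, which is a finite check.
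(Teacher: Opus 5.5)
Your proposal is correct and matches the paper's proof: the same T6 parameters $(a,b,u)=(5,14/3,3)$, the same one-line failures of A, B, T2 and T5, and for T1 exactly the binding pair of endpoints you anticipate. To finish that step explicitly: $2\in\mathcal V$ forces $\lambda_1\ge\max\{14/3,3,h_2\}=5$, while $v>1$ gives $\Lambda=\max\{4,3+v\}=3+v$, so $\lambda_1\le H-2\Lambda=8-2v<5$ for $v>3/2$, and the T1 interval is empty.
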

\begin{proof}
We test the hypotheses of each preceding construction on the displayed family.
Set $(a,b,u)=(5,14/3,3)$ in T6. Construction A fails because the remainder slope $3$ is not among $5,5,4$. Construction B fails because $h_2=5>\rho_1=3$. T2 fails because $h_3=4>\rho_1$. T5 requires three remainder counts and equal first hull values. For T1, vertex $2$ requires $\lambda_1\ge5$, while its total-remainder bound requires
$\lambda_1\le14-2(3+v)=8-2v<5$. Hence its interval is empty.
\end{proof}
In particular, $v=8/5$ gives the exact rational example displayed in Table~\ref{tab:instances}. This family demonstrates a genuine gap in that cover; it is not a counterexample to EC sufficiency.

Consequently A, B, T1, T2 and T5 do not cover all rational pairs with $b_w=3$, independently of the question whether EC is sufficient. T6 supplies a realizable family in that gap.
\begin{table}[htbp]
\centering\small\setlength{\tabcolsep}{3pt}
\caption{Exact instances. Each item is (value, size); powers denote multiplicity. The last column $N$ is the number of items used, not a minimality assertion. Only the last row imposes a cap.}\label{tab:instances}
\begin{tabular}{llp{3.6cm}p{4.5cm}@{\hspace{6pt}}cr}
\toprule
Family & Role & $(H_w;H_r)$ & Items & $(D,R)$ & $N$\\
\midrule
A & Separated & $(0,5,10,12);(0,5,7)$ & $(5,1),(2,1),(5,3)$ & $(11,2)$ & 3\\
B & Overlap & $(0,5,7,8);(0,3,5)$ & $(3,1),(2,1),(5,9),(4,8),(3,7)$ & $(9,2)$ & 5\\
T1 & T1 cone & $(0,4,8,10);(0,1,2,3)$ & $(1,1)^3,(4,4),(3,5)^2,(4,10)$ & $(14,3)$ & 7\\
T2 & T2 cone & $(0,4,8,11);(0,3,5)$ & $(3,1),(2,1),(4,3),(4,4)$ & $(8,2)$ & 4\\
T5 & Isolated short & $(0,4,7,10);(0,4,6,8)$ & $(4,3),(3,1)^2,(2,1)$ & $(5,3)$ & 4\\
T6 & Template gap & $(0,5,10,14);(0,3,23/5)$ & $(14/3,4)^3,(5,6)^2,$\newline $(3,1),(8/5,2)$ & $(12,3)$ & 7\\
$K=4$ & Cap-only & $(0,3,5,7);(0,1,2,3)$ & $(3,4),(2,4)^2,(1,1)^3$ & $(12,3)$ & 6\\
\bottomrule
\end{tabular}
\end{table}
\FloatBarrier
\subsection{The scalar tests are not sufficient}\label{sec:scalar-gap}
Theorem~\ref{thm:bw2} characterizes realizability by dominance and the all-long bounds when $b_w\le2$, and Proposition~\ref{prop:scalarfinite} shows that dominance, \eqref{eq:LR} and \eqref{eq:PC} decide every target on the six integer-slope domains. It is natural to ask whether this finite agreement extends to all rational targets with $b_w=3$. It does not: we give a pair for which the all-long antecedent fails at the terminal vertex and the pair-complement bound holds with equality.

\begin{theorem}[The linear closure is strictly larger than realizability]\label{thm:scalar-gap}
The pair
\[
 H_w=(0,7,11,15),\qquad H_r=(0,4,7,10)
\]
satisfies dominance and every applicable instance of \eqref{eq:LR}, \eqref{eq:LRb} and \eqref{eq:PC}, and is not realizable.
\end{theorem}
\begin{proof}
We check the scalar tests, then split according to the overlap of two attaining triples. Write $v(x)$ for the value of an item $x$. The slopes are $h=(7,4,4)$ and $\rho=(4,3,3)$, so $\mathcal V_w=\mathcal V_r=\{1,3\}$, $\rho_1=4$ and $N^*=8$.

\emph{Membership in $L$.} Dominance is immediate. At the terminal $w$-vertex $n=3$ we have $h_3=4=\rho_1$, so the all-long antecedent fails and neither \eqref{eq:LR} nor \eqref{eq:LRb} applies there; at $n=1$, \eqref{eq:LR} reduces to dominance. The pair-complement antecedent holds, since $\Hrb(2)=7<15-7=H_w(3)-H_w(1)$, and
$\min\{H/3,(H-\rho_1)/2\}=\min\{5,11/2\}=5$ with $H=15$.
Thus \eqref{eq:PC} reads $7\ge5$, $11\ge9$, $15\ge12$ and $15\ge15$ at $m=0,1,2,3$: the last is an equality.

\emph{Unrealizability.} Suppose an instance realizes the pair. Let $T$ be a $D$-feasible $3$-set of value $15$ and $Z$ an $R$-feasible $3$-set of value $10$.
For $x\in T$ the set $T\setminus\{x\}$ is a $D$-feasible pair, so $v(x)\ge15-\Hb(2)=4$. For $z\in Z$ the set $Z\setminus\{z\}$ is an $R$-feasible pair, so $v(z)\ge10-\Hrb(2)=3$, and $z$ is an $R$-feasible singleton, so $v(z)\le\rho_1=4$.
An item of $T\cap Z$ therefore has value exactly $4$, and two such items would form an $R$-feasible pair of value $8>\Hrb(2)$. Hence $|T\cap Z|\le1$.

\emph{Case $T\cap Z=\varnothing$.} Not every member of $T$ is short: three shorts of value at least $4$ and at most $\rho_1=4$ sum to $12$, not $15$. Choose a long $x\in T$. By Lemma~\ref{lem:exchange}(a), $(T\setminus\{x\})\cup Z$ is $D$-feasible with count $5$ and value $25-v(x)$, so $\Hb(5)=15\ge25-v(x)$ and $v(x)\ge10>H_w(1)$, a contradiction.

\emph{Case $T\cap Z=\{z\}$.} Then $v(z)=4$, $T=\{z,y_1,y_2\}$ with $v(y_1)+v(y_2)=11$, and $Z=\{z,z_2,z_3\}$ with $v(z_2)+v(z_3)=6$ and both values in $[3,4]$, hence both equal to $3$.
At least one $y_i$ is long, since two shorts sum to at most $8$. For a long $y\in T$, Lemma~\ref{lem:exchange}(a) gives a $D$-feasible $4$-set $(T\setminus\{y\})\cup(Z\setminus T)$ of value $21-v(y)$, so $\Hb(4)=15\ge21-v(y)$ and $v(y)\ge6$.
Two longs would give $v(y_1)+v(y_2)\ge12>11$, so exactly one $y_i$ is long; the other is short and worth exactly $4$, and the long one is worth $7$.

Write $y_1$ for the long item. Feasibility of $Z$ gives $d(z)+d(z_2)+d(z_3)\le R$.
The pair $\{z,y_2\}$ has value $8>\Hrb(2)$, so $d(z)+d(y_2)>R$; together these imply $d(y_2)>d(z_2)+d(z_3)$.
However, feasibility of $T$ gives $d(y_1)+d(z)+d(y_2)\le D$, while the set $\{y_1,z,z_2,z_3\}$ has value $17>\Hb(4)$ and therefore satisfies $d(y_1)+d(z)+d(z_2)+d(z_3)>D$.
Together these give $d(z_2)+d(z_3)>d(y_2)$, the opposite strict inequality. Both overlap cases are impossible.
\end{proof}

\begin{remark}\label{rem:scalar-gap-ec}
The pair of Theorem~\ref{thm:scalar-gap} also fails EC, so it is not a counterexample to Question~\ref{q:ec-three}. We derive the contradiction from the local witness for $(n,m)=(3,3)$; no globally coherent witness is assumed.
\begin{enumerate}[label=\textup{(\roman*)},leftmargin=2.8em]
\item \textbf{Value ranges.} The complementary-pair bounds and the singleton bounds in (EC1)--(EC2) give
\[
 4\le x_{3,i}\le7,\quad \sum_i x_{3,i}=15,
 \qquad 3\le z_{3,j}\le4,\quad \sum_j z_{3,j}=10.
\]
At least one member of $X_3$ is long, since three shorts total at most $12$.
\item \textbf{The overlap has size one.} Put $k=|O_{3,3}|$. If $k=0$, (EC4) applied to any long value $x$ gives
\[
 15\ge25-x\quad\Longrightarrow\quad x\ge10>7,
\]
which is impossible. A matched item must be short and lie in both value ranges in (i), so its value is $4$. If $Z_3$ contains no member of value $4$, then no such matching exists, so $k=0$ and the contradiction just obtained already applies. Otherwise two matches would put a value-$8$ pair in $\mathcal F_3$, violating its remainder bound $\Hrb(2)=7$. The case $k=3$ is also excluded by the long member of $X_3$, since only shorts can be matched. Therefore $k=1$.
\item \textbf{The larger values are forced.} With $k=1$ and matched value $4$, (EC4) requires
\[
 15\ge15-x+10-4=21-x\quad\Longrightarrow\quad x\ge6
\]
for every long value $x$. Two longs and the third value would total at least $6+6+4=16$. Thus exactly one item is long, both shorts have value $4$, and the long item has value $7$.
\item \textbf{The pair-complement clause fails.} The two short $4$s form a pair of value $8$, so that pair is absent from $\mathcal F_3$. Its complementary item $y$ has value $7$ and is long, hence unmatched. Clause (EC5) has output count $3+1=4$ and requires
\[
 \Hb(4)=15\ge x_{3,y}+H_r(3)=7+10=17,
\]
which is false.
\end{enumerate}
No EC witness exists.
\end{remark}

\begin{corollary}\label{cor:strict-L}
For $b_w=3$, $L\ne\mathrm{EC}$ and $L\ne\mathrm{FWC}$ in the inclusions $\mathrm{FWC}\subseteq\mathrm{EC}\subseteq L$.
\end{corollary}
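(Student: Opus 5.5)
The plan is to obtain both inequalities from one explicit pair lying in $L$ but not in EC. The natural candidate is the pair of Theorem~\ref{thm:scalar-gap}, $H_w=(0,7,11,15)$ and $H_r=(0,4,7,10)$. It has $b_w=3$, so it lies in the regime named in the statement.

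First I would record membership in $L$. This was checked directly in the first part of the proof of Theorem~\ref{thm:scalar-gap}. Dominance holds. The all-long antecedent fails at $n=3$ because $h_3=\rho_1$, and at $n=1$ the bound \eqref{eq:LR} reduces to dominance. The bound \eqref{eq:PC} holds at $m=0,\dots,3$ (with equality at $m=3$), and by the interpolation step in Corollary~\ref{cor:PC} at every intermediate count. No further scalar test enters the definition of $L$, so the pair belongs to $L$.

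Second, I would show the pair is not in EC by invoking Remark~\ref{rem:scalar-gap-ec}. That remark derives a contradiction from the local witness data for $(n,m)=(3,3)$ alone:
\begin{itemize}
\item the value ranges force the overlap size $k=1$;
\item (EC4) then forces exactly one long item, of value $7$, together with two short items of value $4$;
\item (EC5) applied to the excluded short pair requires $15\ge17$, which is false.
\end{itemize}
Since EC only requires the existence of some admissible witness, and every witness must contain the vertex sets $X_3$ and $Z_3$ with these clauses, no witness exists. Hence the pair lies in $L\setminus\mathrm{EC}$, which gives $L\ne\mathrm{EC}$.

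Third, I would deduce $L\ne\mathrm{FWC}$. Theorem~\ref{thm:fwc} and Theorem~\ref{thm:ec} give $\mathrm{FWC}\subseteq\mathrm{EC}$, so the same pair lies in $L\setminus\mathrm{FWC}$. Alternatively, this follows directly from the unrealizability part of Theorem~\ref{thm:scalar-gap} combined with Theorem~\ref{thm:fwc}.

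The only point needing care is that the remark's argument does not use global coherence. It must hold for an arbitrary abstract witness, including arbitrary choices of $\mathcal F_3$ and of the matching. The remark already addresses this: an absent value-$4$ item in $Z_3$ forces $k=0$, and the long item blocks $k=3$. I therefore expect no real obstacle; the corollary is a bookkeeping consequence of the two earlier results.
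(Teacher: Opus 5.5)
Your proposal is correct and matches the paper's proof. The paper takes the pair $H_w=(0,7,11,15)$, $H_r=(0,4,7,10)$ from Theorem~\ref{thm:scalar-gap} as an element of $L$, excludes it from EC by Remark~\ref{rem:scalar-gap-ec} using only the local $(3,3)$ witness, and excludes it from FWC via $\mathrm{FWC}\subseteq\mathrm{EC}$ (or unrealizability with Theorem~\ref{thm:fwc}); the only harmless redundancy in your version is the appeal to interpolation for \eqref{eq:PC}, since $m$ ranges over the integers $0,\dots,3$, all of which you check directly.
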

\begin{proof}
Theorem~\ref{thm:scalar-gap} and Remark~\ref{rem:scalar-gap-ec} give an element of $L$ outside both smaller classes.
\end{proof}
The pair uses a larger slope than any domain of Appendix~\ref{app:computations}, whose slopes are at most six. It lies where the scalar tests lose information: the all-long bounds are silent because $h_3=\rho_1$, and the pair-complement bound is tight. The proof uses a long item without the all-long antecedent, then a size trade that the scalar comparison does not retain. Question~\ref{q:scalar-three-new} asks for a replacement test.
\section{Witness and cap separations}\label{sec:separations}\subsection{Completion of a specified witness}
A witness without a size completion might suggest that its target is impossible. We show why that inference fails: another witness can realize the same target.\label{sec:trade}
The standard equal-incidence trade for two thresholds is the sum argument below; see \cite{GK1980} for non-uniform threshold matroids and \cite{Partida2024} for the distinction from fixed-rank thresholdness (which Partida calls simply thresholdness; we say uniform thresholdness for contrast). The sum argument is a necessary obstruction for arbitrary threshold families; it does not require that they are matroids. Its role here is to obstruct completion of a coherent EC witness whose numerical target is nevertheless realizable.
\begin{lemma}[Balanced threshold trade]\label{lem:trade}
Suppose two lists of sets have the same sum of item-incidence vectors, with at least one set on the second list. Assign each set a threshold label, $D$ or $R$, with the same number of each label on both lists. If all sets on the first list are required to fit their labeled thresholds and all sets on the second list are required not to fit, then no common size representation exists.
\end{lemma}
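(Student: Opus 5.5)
The plan is a direct summation argument by contradiction. Suppose positive sizes $d_i$ and thresholds $0\le R<D$ represent the families. Let the first list be $S_1,\dots,S_p$ with labels $\theta_1,\dots,\theta_p\in\{D,R\}$, and the second list $T_1,\dots,T_q$ with labels $\tau_1,\dots,\tau_q$, where $q\ge1$. The hypothesis on incidence vectors reads $\sum_j \mathbf 1_{S_j}=\sum_k \mathbf 1_{T_k}$ as functions on $E$. Pairing this identity with the size vector $d$ gives
\[
 \sum_{j} d(S_j)=\sum_i d_i\sum_j \mathbf 1_{S_j}(i)=\sum_i d_i\sum_k\mathbf 1_{T_k}(i)=\sum_k d(T_k).
\]

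Next I will bound the two sides in opposite directions. Every first-list set fits its label, so $\sum_j d(S_j)\le\sum_j\theta_j$. Every second-list set fails its label, so $d(T_k)>\tau_k$ for each $k$. Since $q\ge1$, at least one of these inequalities is present and all of them are strict, so summing them gives $\sum_k d(T_k)>\sum_k\tau_k$. The label-count hypothesis says the multisets $\{\theta_j\}$ and $\{\tau_k\}$ coincide, with the same number of $D$'s and the same number of $R$'s. Hence $\sum_j\theta_j=\sum_k\tau_k$, and the chain $\sum_k\tau_k<\sum_k d(T_k)=\sum_j d(S_j)\le\sum_j\theta_j=\sum_k\tau_k$ is a contradiction.

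There is no real obstacle. The only points needing care are bookkeeping ones. First, the label hypothesis must be read as equality of label counts, so that the capacity totals match; the lists themselves may have different lengths only if they have equal label counts, which forces $p=q$ anyway. Second, the lemma needs the nonemptiness of the second list, since that is what makes the summed inequality strict. Positivity of the sizes is not even needed; only the linearity of $d$ is used. I would close by noting the instance in Figure~\ref{fig:exchange}(d): the first list is $\{a,b,c\}$ labeled $D$ and $\{u,v\}$ labeled $R$, and the second list is $\{b,c,u\}$ labeled $R$ and $\{a,v\}$ labeled $D$.
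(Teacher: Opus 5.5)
Your proof is correct and is the same summation argument the paper uses: pair the common incidence vector with the sizes, sum the weak fit inequalities and the strict nonfit inequalities, and use the equal label counts to make the capacity totals agree, with the nonempty second list supplying the strict inequality. Your side remarks are accurate: only additivity of $d$ is used, and equal label counts force equal list lengths.
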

\begin{proof}
We sum the same item incidences on the two sides.
Summing the first list's weak inequalities bounds its total size by the common sum of labeled capacities. Summing the second list's strict inequalities puts that same total size strictly above the same sum of capacities. Equivalently, the same item-incidence vector cannot have total size both at most and greater than the same sum of capacities.
\end{proof}
\begin{theorem}[Noncompletable EC witnesses of realizable pairs]\label{thm:trade-family}
For rational parameters
\begin{equation}\label{eq:trade-parameters}
 B<A<2B,\qquad 2B-A\le C\le B,
\end{equation}
the pair
\[
 H_w=(0,A,A+B,A+2B),\qquad H_r=(0,A,A+C)
\]
is realizable and has a globally coherent EC witness that admits no size completion preserving its values and identities. In particular, these pairs are not counterexamples to existential sufficiency of EC.
\end{theorem}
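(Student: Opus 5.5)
The proof has two independent parts: an explicit realization of the target pair, and a separately chosen globally coherent EC witness whose declared feasibilities form a balanced trade, so that Lemma~\ref{lem:trade} rules out any size completion. First record the shape of the targets. The larger slopes are $(A,B,B)$, so $\mathcal V_w=\{1,3\}$. The remainder slopes are $(A,C)$ with $C\le B<A$, so $\mathcal V_r=\{1,2\}$, $\rho_1=A=h_1$ and $N^*=6$. Constructions A and B of Theorem~\ref{thm:AB} apply only when $C=B$, so I would give a direct instance instead.

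\emph{Realization.} Take two short items $(A,1)$ and $(C,1)$, two long items $(B,3)$, and set $R=2$, $D=7$.
\begin{itemize}
\item In $R$, only the shorts fit, and they fit together, giving $H_r=(0,A,A+C)$ exactly.
\item In $D$, every pair fits and the best pair is worth $A+B$, since $2B<A+B$. The best triple, $(A,1)$ with both longs, has size $7$ and value $A+2B$. Any triple with both shorts is worth at most $A+C+B\le A+2B$.
\item The only $4$-set has size $8>D$.
\end{itemize}
So the raw $D$-profile is $(0,A,A+B,A+2B)$, which is concave and strictly increasing, and the pair is realizable. This also gives the closing sentence of the statement.

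\emph{The witness.} I would build a different witness on one ground set $\{a,b,c,u,v\}$, following Figure~\ref{fig:exchange}(d):
\begin{itemize}
\item $X_3=\{a,b,c\}$ and $Z_2=\{u,v\}$, with $u$ of value $A$ and $v$ of value $C$;
\item $X_1$ and $Z_1$ taken to be single items of value $A$ inside this ground set;
\item values on $a,b,c$ summing to $A+2B$, with long/short labels and declared families $\mathcal F_n$ chosen so that the required fits are exactly $d_a+d_b+d_c\le D$ and $d_u+d_v\le R$.
\end{itemize}

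\emph{The two nonfits.} The value assignment must force $\{b,c,u\}\notin\mathcal R$ and $\{a,v\}\notin\mathcal D$. The first is automatic whenever $b,c,u$ have total value above $\bar H_r(3)=A+C$. The second requires a value assignment under which $\{a,v\}$, or a set forced to be feasible together with it, would violate $\bar H_w$. The parameter window~\eqref{eq:trade-parameters} should be exactly what makes such an assignment exist. I expect $C\ge 2B-A$ to guarantee the (EC1) and (EC2) subset bounds (for instance $2B\le A+C$), and $A<2B$ to make the decisive $D$-overflow strict. Once both nonfits are in place, the first list $\{a,b,c\}_D,\{u,v\}_R$ and the second list $\{b,c,u\}_R,\{a,v\}_D$ have equal incidence sums and equal label counts. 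Lemma~\ref{lem:trade} then yields $D+R<D+R$, so no positive sizes complete this witness while preserving values and identities.

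\emph{Verifying EC.} It remains to check (EC1)--(EC5) for the witness, with the maps $\pi_{n,m}$ given by identity on the shared items. This is a finite check over the vertex pairs $(n,m)\in\{1,3\}\times\{1,2\}$. For (EC4), the long-removal output counts are at most $4$, and since $\bar H_w$ is flat at $A+2B$ beyond count $3$ the inequalities reduce to simple comparisons. For (EC5), the output count is $m+1\le3$.

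\emph{Main obstacle.} The hard step is choosing the labels and values on $a,b,c$ so that three things hold at once: the nonfit of $\{a,v\}$ in $D$ is genuinely forced by the value bounds, every exchange inequality of EC is still satisfied, and no hidden size constraint breaks the trade balance. I would first try $a$ long and $b,c$ short, and split $A+2B$ so that $\{b,c,u\}$ exceeds $A+C$ in $R$. Then I would test whether a $D$-fit of $\{a,v\}$ combined with the declared fits produces an over-valued $D$-feasible set. If it does not, I would redistribute the value $A+2B$ among $a,b,c$ differently, or relabel which of $a,b,c$ are long, until the two required nonfits follow from the bounds.
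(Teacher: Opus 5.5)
Your realization is correct: with $(A,1),(C,1),(B,3),(B,3)$ at $D=7$, $R=2$ the raw profiles are exactly $(0,A,A+B,A+2B)$ and $(0,A,A+C)$. This is a harmless variant of the paper's $(B,3),(B,4)$ at $D=8$. (Incidentally, $N^*=1+3+1+2=7$, not $6$.)

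The gap is the witness, which is the actual content of the theorem. You never fix the values, labels or $\mathcal F_3$, and the choices you do commit to fail.

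\emph{The targeted $D$-nonfit cannot be forced.} You give $u$ value $A$ and $v$ value $C$. But (EC1) gives $x_a\le H_w(1)=A$, hence $x_a+C\le A+C\le A+B=\bar H_w(2)$. Since $a$ and $v$ lie in no common witness set and $\{a,v\}$ is not over-valued, no row of the completion system requires $\{a,v\}$ not to fit $D$. So your fallback of redistributing $A+2B$ over $a,b,c$, or relabeling them, cannot help. The $D$-nonfit must pair $a$ with the value-$A$ member of $Z_2$, and the $R$-nonfit must join $b,c$ to the value-$C$ member. The paper does this by giving $u$ value $C$ and $v$ value $A$.

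\emph{Your first try violates (EC4).} If $a$ is long and the overlap is empty, removing $a$ and inserting $Z_2$ gives count four and value $A+2B-x_a+A+C\ge A+2B+C>\bar H_w(4)$. So no member of $X_3$ can be labeled long.

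\emph{The witness that works.} Label every item short, so (EC4) is vacuous. Give $a,b,c$ the values $A,B,B$, and declare $\mathcal F_3=\{\varnothing,\{a\},\{b\},\{c\},\{b,c\}\}$.
\begin{itemize}
\item Declaring $\{b,c\}$ feasible is forced: otherwise (EC5) with $y=a$ against $Z_1$ would demand $A+B\ge 2A$, which is false.
\item Its remainder bound $2B\le A+C$ is exactly where $C\ge 2B-A$ enters.
\item The surviving (EC5) instances come from $\{a,b\}$ and $\{a,c\}$. Each has a leftover of value $B$, giving an equality at $m=1$ and $C\le B$ at $m=2$.
\end{itemize}

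With this witness, $\{b,c\}$ plus the value-$C$ item is worth $2B+C>A+C$, which uses $A<2B$. The item $a$ plus the value-$A$ item is worth $2A>A+B$, which uses $A>B$. So $A<2B$ drives the $R$-overflow, not the $D$-overflow as you suggested. With these two value-forced nonfits, Lemma~\ref{lem:trade} finishes the argument as you describe.
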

\begin{proof}
We construct the non-completable witness and a different realization of its target.
Under \eqref{eq:trade-parameters}, the positive vertices are $\mathcal V_w=\{1,3\}$ and $\mathcal V_r=\{1,2\}$. Take seven \emph{distinct} items and the disjoint vertex sets
\[
 X_1=\{s\},\quad X_3=\{a,b,c\},\quad Z_1=\{t\},\quad Z_2=\{u,v\},
\]
with values $A;(A,B,B);A;(C,A)$, respectively. All seven items are labeled short, consistent with every value being at most $\rho_1=A$ and with the declared feasible pair $\{b,c\}$. In $X_3$, declare exactly the singletons and $\{b,c\}$, together with the empty set, $r$-feasible. In $X_1$, declare the singleton feasible. All overlaps are empty.

Every $w$-feasible subset bound holds: the largest pair in $X_3$ is worth $A+B$, its total is $A+2B$, and no singleton exceeds $A$. The declared remainder pair is worth $2B\le A+C$. The values in $Z_2$ satisfy the remainder bounds because $0<C\le B<A$. There are no long-removal constraints. The only pair-complement exchanges from $X_3$ use a leftover item of value $B$; they require
\[
 B+A\le A+B,\qquad B+(A+C)\le A+2B,
\]
which hold. Thus this is an EC witness, with no ambiguity from pairwise identifications.

In any completion, $X_3$ fits $D$ and $Z_2$ fits $R$, so
\begin{equation}\label{eq:trade-fit}
 d_a+d_b+d_c\le D,\qquad d_u+d_v\le R.
\end{equation}
However, $\{b,c,u\}$ is worth $2B+C>A+C$, so it cannot fit $R$, and $\{a,v\}$ is worth $2A>A+B$, so it cannot fit $D$. Therefore
\begin{equation}\label{eq:trade-nonfit}
 d_b+d_c+d_u>R,\qquad d_a+d_v>D.
\end{equation}
The sum of \eqref{eq:trade-fit} contradicts the sum of \eqref{eq:trade-nonfit}. This uses the same five item occurrences and one copy of each capacity on either side; the second list has two sets, so the nonemptiness condition in Lemma~\ref{lem:trade} holds.

Nevertheless the four items
\[
 (a_i,d_i)=(A,1),(C,1),(B,3),(B,4),\qquad D=8,\ R=2
\]
realize the target pair. Only the first two items fit $R$. In $D$, the best singleton is worth $A$, the best pair $A+B$, and the best triple $A+2B$. Every other triple is bounded by this value since $C\le B$. All four items have total size $9>8$. Hence the two profiles, and therefore their hulls, equal the displayed targets.
\end{proof}
\quantifierbox{Theorem~\ref{thm:trade-family} separates ``this witness completes'' from ``some witness completes''. The displayed target remains realizable; the theorem does not separate the classes EC and FWC.}
\paragraph{The same specified witness under a cap.}
The witness in Theorem~\ref{thm:trade-family} also satisfies the subset-enriched closure at $K=4$ (defined in Section~\ref{sec:caps}), yet has no size completion. All its items are short, so long removal is vacuous. For pair-complement exchanges from $X_3$, the remaining item has value $B$. Inserting all of $Z_2$ gives $B+A+C\le A+2B$ at count three; inserting a singleton gives at most $B+A=H_w(2)$, and inserting the empty set gives $B\le H_w(1)$. The exchanges with $Z_1$ obey the same bounds. All are cap-admissible. The two required fits and two forbidden sets in the trade have counts at most three in the remainder and at most four in the larger problem, so their contradiction persists. Nevertheless the four-item target realization remains valid at $K=4$.

\begin{example}\label{ex:trade}
Taking $(A,B,C)=(3,2,1)$ gives $H_w=(0,3,5,7)$ and $H_r=(0,3,4)$. The four inequalities give the contradiction directly.\footnote{Taking one half of each row in the positive-margin formulation of Proposition~\ref{prop:completion-lp} gives $\varepsilon\le0$; the row coefficients and rational multipliers are recorded in \texttt{completion\_counterexample.json}.} The alternative instance is $(3,1),(1,1),(2,3),(2,4)$ with $(D,R)=(8,2)$.
\end{example}
\subsection{The capped closure hierarchy}\label{sec:caps}
A cap $K\ge1$ admits at most $K$ items in the $w$-problem and $K-1$ in the $r$-problem, since the motivating mandatory item occupies one slot. Thus $b_w\le K$ and $b_r\le K-1$ are necessary. We retain these asymmetric caps throughout; imposing a common cap is a different admissibility convention.

Define $\mathrm{FWC}_K$ by full witnesses with those admissible counts. Vertex-only capped EC, denoted $\mathrm{EC}^{v}_K$, is Definition~\ref{def:ec} with bounds applied only at admissible counts and exchanges whose actual output count exceeds $K$ omitted. For the \emph{subset-enriched capped closure}, require the same dimensional admissibility $b_w\le K$, $b_r\le K-1$ and dominance, retain the same witness data and the cap-admissible bounds in (EC1)--(EC3), and apply both exchanges to every subset $Q\subseteq Z_m$. For each fixed pair $(n,m)$ define
\[
 q=|Q|,\qquad G(Q)=\sum_{j\in Q}z_{m,j},\qquad
 O_{n,m}(Q)=\{i\in O_{n,m}:\pi_{n,m}(i)\in Q\},\qquad k_Q=|O_{n,m}(Q)|.
\]
Thus $O_{n,m}(Q)=\pi_{n,m}^{-1}(Q)\cap O_{n,m}$ is the induced overlap on the $X_n$ side, and the induced matching is the restriction of $\pi_{n,m}$ to that set. The two exchange clauses are:
\begin{enumerate}[leftmargin=4.8em,label={}]
\item[\textup{(EC4$'$)}] For every long $i\in X_n$ and every $Q\subseteq Z_m$ with $n-1+q-k_Q\le K$,
\[
 \bar H_w(n-1+q-k_Q)\ge H_w(n)-x_{n,i}+G(Q)
 -\sum_{i'\in O_{n,m}(Q)}z_{m,\pi_{n,m}(i')}.
\]
\item[\textup{(EC5$'$)}] For every pair $\{i,i'\}\subseteq X_n$ absent from $\mathcal F_n$, every $Q\subseteq Z_m$ with $q+1\le K$, and every
$y\in X_n\setminus(\{i,i'\}\cup O_{n,m}(Q))$,
\[
 \bar H_w(q+1)\ge x_{n,y}+G(Q).
\]
\end{enumerate}
These clauses use the actual subset value $G(Q)$, not $H_r(q)$; both the overlap subtraction and the output count change when $Q$ changes. Taking $Q=Z_m$ recovers every retained vertex-only exchange. When $b_r=0$ there are no positive remainder witnesses, so the clauses quantifying over $m$ are vacuous as in Definition~\ref{def:ec}.

Define $L_K$ by dimensional admissibility, dominance, and \eqref{eq:LR}--\eqref{eq:PC} at $m=0$ and positive $r$-vertices only, whenever the output count is admissible. Interpolation across an omitted endpoint is not part of $L_K$.

\begin{lemma}[Capped hierarchy]\label{lem:capped-chain}
For every integer $K\ge1$,
\[
 \mathrm{FWC}_K\subseteq
 \{\text{pairs in the subset-enriched capped closure}\}
 \subseteq\mathrm{EC}^{v}_K\subseteq L_K.
\]
\end{lemma}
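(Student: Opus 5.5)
We prove the three inclusions separately, in the order they are displayed, mirroring the uncapped chain of Theorem~\ref{thm:ec} and Corollaries~\ref{cor:LR}--\ref{cor:PC}.

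\emph{First inclusion.} Given a pair in $\mathrm{FWC}_K$, we read off a witness from the full witness exactly as in the proof of Theorem~\ref{thm:ec}. Dimensional admissibility is part of $\mathrm{FWC}_K$. Dominance holds at admissible counts because every $R$-feasible set of at most $K-1$ items is $D$-feasible with at most $K$ items. We take $X_n,Z_m$ as the specified vertex sets, $\mathcal F_n=\{S\subseteq X_n: d(S)\le R\}$, and $O_{n,m}=X_n\cap Z_m$ with $\pi_{n,m}$ the identity. The bounds (EC1)--(EC3) at admissible counts then follow from the cap-restricted bounds of Definition~\ref{def:fwc}. For (EC4$'$) and (EC5$'$), fix $Q\subseteq Z_m$. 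Then $Q$ fits $R$ by downward closure, so Lemma~\ref{lem:exchange} applies with $Z$ replaced by $Q$. The overlap $X_n\cap Q$ is precisely $O_{n,m}(Q)$, so the output of (a) has count $n-1+q-k_Q$ and value $H_w(n)-x_{n,i}+G(Q)-\sum_{O_{n,m}(Q)}$. The output of (b) has count $q+1$ and value $x_{n,y}+G(Q)$. Whenever that count is at most $K$, the output is an admissible $D$-feasible set, so its value is bounded by $\bar H_w$ at that count.

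\emph{Second inclusion.} We specialize to $Q=Z_m$. Then $q=m$, $G(Q)=H_r(m)$ and $O_{n,m}(Q)=O_{n,m}$ (matchings land in $Z_m$), so (EC4$'$) and (EC5$'$) become (EC4) and (EC5) under exactly the admissibility condition used in $\mathrm{EC}^v_K$. The remaining data and bounds coincide.

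\emph{Third inclusion.} We rerun the proofs of Corollaries~\ref{cor:LR} and~\ref{cor:PC} inside an $\mathrm{EC}^v_K$ witness, with two adjustments:
\begin{itemize}[leftmargin=2em]
\item We evaluate only at $m=0$ and at positive $r$-vertices whose output count is admissible, which is all that $L_K$ demands.
\item We omit the interpolation step, so no non-vertex count is needed.
\end{itemize}
The $m=0$ cases use only concavity, as before. At a positive $r$-vertex, the all-long argument (a short member would give an $(n-1)$-subset exceeding $H_w(n-1)$, which is an admissible count) and the pair-exclusion argument rely only on local subset bounds at counts $1,2,n-1\le K$. The exchange being invoked is exactly one retained in $\mathrm{EC}^v_K$ because its output count is admissible.

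\emph{Main obstacle.} The delicate point is checking that every subset bound used in these corollary arguments lies at an admissible count. Pair values in $\mathcal F_n$ are checked against $\bar H_r(2)$, which requires $2\le K-1$. When $K\le2$, we would treat the pair-complement and $n\ge3$ clauses as vacuous, since $b_w\le K$ rules out $n\ge3$. We also need to confirm that $\rho_1$ is meaningful when $b_r=0$; we follow the convention $\rho_1=0$ already fixed in Definition~\ref{def:ec}.
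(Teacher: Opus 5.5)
Your proof is the paper's argument. You read off the actual attaining sets with $O_{n,m}=X_n\cap Z_m$ and identity matchings, so that $O_{n,m}(Q)=X_n\cap Q$ and Lemma~\ref{lem:exchange} applies to every $Q\subseteq Z_m$. You then specialize to $Q=Z_m$, and rerun Corollaries~\ref{cor:LR}--\ref{cor:PC} at retained endpoints without interpolation.

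Defining $\mathcal F_n$ purely by size, with no cardinality truncation, is a harmless variant. Absence of a pair then literally means $d_i+d_{i'}>R$, so (EC5$'$) follows from Lemma~\ref{lem:exchange}(b) for every $K$. This avoids the paper's split into $K\ge3$ (absent pairs cannot fit $R$) and $K\le2$ (no complementary $y$ exists).

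The one case you leave uncovered is $K=1$, which the paper handles separately.
\begin{itemize}
\item The remainder problem then admits only the empty set, so $b_r=0$ and $\rho_1=0$.
\item Yet an item $s$ with $d_s\le R$ can still be the attaining singleton $X_1$, with value $H_w(1)>0$. For $K\ge2$ this cannot happen, since $\{s\}$ is then admissible in the remainder and forces $a_s\le\bar H_r(1)$.
\item Your read-off labels $s$ short and puts $\{s\}$ in $\mathcal F_1$. This violates the (EC1) clause ``short values are at most $\rho_1$'', and the remainder bound on $\mathcal F_1$, unless you argue that both are count-one remainder bounds dropped under the cap.
\end{itemize}
The paper sidesteps the interpretive question. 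It notes that every pair in $\mathrm{FWC}_1$ is realized by the empty instance or by a single long item, whose witness has no short labels. Equivalently, you may relabel $s$ as long and take $\mathcal F_1=\{\varnothing\}$, which costs nothing in an abstract witness since no $m$-clauses exist. With that case added, your proof is complete.
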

\begin{proof}
We use actual attaining sets for the first inclusion. Every subset $Q$ of a remainder witness fits $R$, so Lemma~\ref{lem:exchange} applies whenever its output count is admissible. Taking $O_{n,m}=X_n\cap Z_m$ with $\pi_{n,m}$ the identity on this full actual intersection gives $O_{n,m}(Q)=X_n\cap Q$; hence $y\notin O_{n,m}(Q)$ means $y\notin Q$, exactly the hypothesis in Lemma~\ref{lem:exchange}(b). For $K\ge3$, a pair absent from the actual $\mathcal F_n$ cannot fit $R$; for $K\le2$, no complementary item $y$ exists, so (EC5$\prime$) is vacuous. Keeping only $Q=Z_m$ yields vertex-only EC. The proofs of Corollaries~\ref{cor:LR} and~\ref{cor:PC}, at retained endpoints, give $L_K$; no interpolation across a dropped endpoint is used. For $K=1$, $b_r=0$ and $b_w\le1$, and an empty instance or one long item gives all four conditions directly.
\end{proof}

\begin{remark}[Removal of count truncations]\label{rem:cap-untruncated}
Theorem~\ref{thm:cap-large} identifies capped and uncapped \emph{realizability} for sufficiently large $K$. The corresponding relaxed definitions also lose their count truncations when $K\ge N^*$ and both hulls are nonzero: each local comparison has $n+m\le N^*$, so $n-1+q\le N^*-1$ for $q\le m$, and the pair-complement output count is also admissible. Remainder witness subsets have count at most $b_r\le N^*-1$. Thus each relaxed closure agrees with its uncapped counterpart; for $L_K$, interpolation between the retained remainder vertices recovers all the uncapped scalar tests. If $b_r=0$, every dimensionally admissible target has a realization with all items long, so the same target-level agreement holds. This is not an equality of the different closures with one another.
\end{remark}

\begin{theorem}[Cap irrelevance beyond the witness bound]\label{thm:cap-large}
For $K\ge\max\{1,N^*\}$, capped and uncapped realizability coincide.
\end{theorem}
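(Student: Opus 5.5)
We would prove both inclusions by starting from a realization and showing that the cap neither removes needed attainments nor hides violating sets. We use Lemma~\ref{lem:prune} (in its capped form) for the harder direction. Throughout we use the equivalence of Section~\ref{sec:setting}: a hull equals its target exactly when every admissible feasible set obeys its flat-extended bound and every positive target vertex is attained. The dimensional facts we need are as follows. If $b_w\ge1$ and $b_r\ge1$, then $N^*\ge b_w+b_r$, so $b_w\le N^*-1\le K-1$ and $b_r\le K-1$. If $b_w=0$, dominance forces $b_r=0$.

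\emph{Uncapped $\Rightarrow$ capped.} Take an uncapped realization. Imposing caps $K$ and $K-1$ only deletes feasible sets. Every surviving set still obeys its bound $\bar H_\alpha(|S|)$. Every target vertex of $H_w$ has abscissa at most $b_w\le N^*\le K$, and every target vertex of $H_r$ has abscissa at most $b_r\le K-1$ (using the facts above, or $b_r=0$). Hence the attaining sets survive. The capped hulls are therefore the targets; the first maximizing count is unchanged because strict increase forbids earlier attainment of the terminal value. No pruning is needed here.

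\emph{Capped $\Rightarrow$ uncapped.} Suppose first that $b_r=0$. We replace $R$ by $0$, which is admissible since $R<D$ is preserved. Then no item is short, the $r$-profile is identically zero, and the $D$-feasible family is unchanged. The following argument then applies with only the $w$-side to check. Now assume $b_r\ge1$. Take a capped realization and prune it by Lemma~\ref{lem:prune} to a ground set $E$ with $|E|\le N^*\le K$. Removing the caps adds no $D$-feasible set, since every subset of $E$ has at most $K$ items and was already admissible. Thus the $w$-side is unaffected. On the $r$-side, the only newly admitted sets have count exactly $K$. This can happen only if $K=N^*=|E|$ and $S=E$ is $R$-feasible.

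This last case is the main obstacle, and we handle it as follows. If $E$ fits $R$, every item is short, and $E$ contains the attaining set $X_{b_w}$ with $G(X_{b_w})=H_w(b_w)$. Since $b_w\le K-1$, the set $X_{b_w}$ is an admissible $R$-feasible set in the capped realization. Hence $H_w(b_w)\le \bar H_r(b_w)\le H_r(b_r)$, and dominance gives the reverse inequality, so $H_w(b_w)=H_r(b_r)$. Now $E$ itself is $D$-feasible with $|E|\le K$, so
\[
G(E)\le \bar H_w(|E|)=H_w(b_w)=H_r(b_r)=\bar H_r(|E|).
\]
Thus the new set obeys its remainder bound. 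All vertex attainments were already present, so both uncapped hulls equal the targets.

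The degenerate case $N^*=0$ (both hulls zero) is covered by the empty instance at every $K\ge1$. The cap bound $K\ge\max\{1,N^*\}$ is used only to keep all pruned $D$-sets admissible and, together with $b_w\le N^*-1$, to make $X_{b_w}$ an admissible remainder set in the final step.
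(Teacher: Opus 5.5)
Your proof is correct and takes essentially the paper's route. You prune, observe that after uncapping the only newly admissible set is an $R$-feasible full ground set $E$ with $|E|=K=N^*$, and bound $G(E)\le H_w(b_w)=G(X_{b_w})\le H_r(b_r)$ because $X_{b_w}\subseteq E$ is an admissible $R$-feasible set of count $b_w\le K-1$ in the capped realization. The departures are minor: in the forward direction you skip pruning, since the cap only deletes sets and every vertex abscissa stays admissible, and for $b_r=0$ you reset $R=0$ in the given realization instead of building the paper's fresh unit-size instance.
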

\begin{proof}
We prune first and check separately the one subset that the remainder cap can exclude at equality.
The zero larger hull forces the zero remainder hull and is realized by the empty instance. If $b_r=0$, any larger target with $b_w\le K$ is realized in both models by the $b_w$ slope values as unit-size items, with $D=b_w$ and $R=0$ (use the empty instance when $b_w=0$). We may therefore assume both targets are nonzero.

Forwards, prune an uncapped realization. There are $N\le N^*\le K$ items, so the $w$-profile is unchanged. The remainder cap can delete only the full ground set, and only when $N=K$. Its required terminal vertex has count $b_r\le N^*-b_w<K$, so its maximum value and all required vertices are still attained. Deletion of the possible later full-ground-set point cannot change the target hull.

Conversely, prune a capped realization. Uncapping the larger problem changes nothing. Again only a $r$-feasible full ground set of size $N=K$ could be newly admitted. If that set fits $R$, it also fits $D$. Let $X$ be a $w$-attaining set of size $b_w<K$. Nonnegative values and the larger global bound imply $G(E)=G(X)=H_w(b_w)$. Since $X\subset E$ and $E$ fits $R$, $X$ already fits the capped remainder and attains at least the newly admitted value. Thus the new set cannot raise the remainder maximum or introduce an earlier attainment. All its smaller-cardinality profile values were unchanged.
\end{proof}
The equality case needs this argument: $K\ge N^*$ does not literally mean that the remainder cap is inactive on every subset. The simpler bound $K\ge N^*+1$ makes both caps physically inactive on a pruned ground set.
\begin{theorem}[Cap classification of the linear-remainder family]\label{thm:cap-family}
Let $a\ge b>c>0$ be rational, put $H=a+2b$, and consider
\[
 H_w=(0,a,a+b,H),\qquad H_r=(0,c,2c,3c).
\]
Then $H>3c$, and exactly one of the following four cases holds:
\begin{enumerate}[label=\textup{(\roman*)},leftmargin=3em]
\item $3c<H<9c/2$. The pair fails vertex-only capped EC at $K=4$ and is realizable at no integer cap and not uncapped.
\item $9c/2\le H<6c$. The pair satisfies vertex-only capped EC at $K=4$, but is realizable at no integer cap and not uncapped.
\item $6c\le H<9c$. The pair is realizable precisely at $K=4$ among all integer caps, and is not uncapped-realizable.
\item $H\ge9c$. The pair is realizable uncapped and at every integer cap $K\ge4$.
\end{enumerate}
Case \textup{(iii)} shows that enlarging the admissible counts can destroy realizability of a prescribed hull pair.
\end{theorem}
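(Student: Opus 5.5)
The plan is to treat all four cases through one all-long structure. We have $a\ge b>c$, so $H=a+2b>3c$, and the slopes give $\mathcal V_r=\{3\}$, $\rho_1=c$, $\mathcal V_w\subseteq\{1,3\}$ with $3\in\mathcal V_w$, and $h_3=b>\rho_1$. Since $b_r=3$, every cap $K\le3$ is excluded dimensionally, so only $K\ge4$ and the uncapped problem remain. The key observation (the first step of Corollary~\ref{cor:LR}) is that any attaining triple $T$ at the $w$-vertex $3$ is all long: a short member would leave a pair worth at least $H-c>a+b$. Consequently $T\cap Z=\varnothing$ for every $r$-feasible $Z$, and no pair of $T$ fits $R$. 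Each threshold then comes from one exchange, with the cheapest member $x_{\min}\le H/3$ and the dearest $x_{\max}\ge H/3$ of $T$.

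\emph{Necessity.} There are three bounds, one per threshold.
\begin{enumerate}[label=(\alph*)]
\item Long removal of $x_{\min}$ with all of $Z_3$ has output count $5$. This is \eqref{eq:LR} at $n=m=3$, and it gives $H\ge\tfrac23H+3c$, i.e.\ $H\ge9c$. It is binding uncapped and at every $K\ge5$, as an $L_K$ test, so (i)--(iii) fail there.
\item At $K=4$, vertex-only EC retains only the pair-complement exchange with $Z_3$, whose output count is $4$. It gives $H\ge x_{\max}+3c\ge H/3+3c$, i.e.\ $H\ge9c/2$. This proves the failure of $\mathrm{EC}^v_4$ in (i).
\item For realizability at $K=4$ I use an actual instance, equivalently the subset-enriched closure of Lemma~\ref{lem:capped-chain}. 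Long removal of $x_{\min}$ together with a two-element subset $Q\subseteq Z_3$ has count $4$. Each member of $Z_3$ is worth exactly $c$, because $Z_3\setminus\{z\}$ is worth at most $2c$. So $H\ge H-x_{\min}+2c$, hence $H/3\ge2c$, i.e.\ $H\ge6c$. This excludes (i) and (ii) at $K=4$.
\end{enumerate}

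\emph{Witnesses and constructions.} For (ii) I would write an explicit $\mathrm{EC}^v_4$ witness:
\begin{itemize}
\item $X_3$ consists of three long labels of value $H/3$ each, with $\mathcal F_3$ containing only the empty set;
\item $X_1=\{a\}$ is long (when $1$ is a vertex);
\item $Z_3$ consists of three values $c$, and all overlaps are empty.
\end{itemize}
The checks are as follows. Singletons satisfy $H/3\le a$. Pairs satisfy $2H/3\le a+b$, which is equivalent to $b\le a$. The count-five long removals are omitted. The count-four pair complement needs $H/3+3c\le H$, which holds when $H\ge9c/2$. The $X_1$ long removal has count $3$ and holds trivially.

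For the realizations I use one item family with a free count restriction:
\begin{itemize}
\item three longs $(H/3,4)$;
\item three shorts $(c,1)$;
\item a partner $\bigl((2a+b)/3,8\bigr)$, which fits with exactly one $H/3$-long and attains $a+b$;
\item a solo item $(a,12)$;
\item capacities $D=12$ and $R=3$.
\end{itemize}
Every feasible set is then of one of a few types: short-only; one long or the partner plus shorts; two longs plus at most three shorts; the full triple; or the solo item. These are compared against $\bar H_w$. The binding comparisons are two longs with three shorts (count $5$, worth $2H/3+3c\le H$, requiring $H\ge9c$) and two longs with two shorts (count $4$, worth $2H/3+2c\le H$, requiring $H\ge6c$). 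The partner with shorts and single longs with shorts stay below $H$ and $a+b$ by $a\ge b$.

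Hence the instance realizes the pair uncapped and, after capping, at every $K\ge4$ when $H\ge9c$. Capping only deletes sets, while the attainments have counts at most $3$ in $w$ and $3\le K-1$ in $r$. When $6c\le H<9c$ it realizes the pair at $K=4$, since the only violating sets have count $5$. Together with (a), this gives exactly $K=4$ in (iii).

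I expect the main obstacle to be the exhaustive type check of this instance, especially confirming that the partner of size $8$ cannot be combined with a long to exceed a bound, and that no count-four set beats $H$ under the cap. Bound (c) also needs care, because it is not literally a vertex-only EC clause: I would phrase it directly from Lemma~\ref{lem:exchange}(a) applied to an actual instance, with $Z$ replaced by a two-element subset.
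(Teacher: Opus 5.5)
Your proposal is correct and follows essentially the paper's proof: you use the same all-long observation and the same three exchanges. These give the thresholds $9c/2$ (pair complement at count four), $6c$ (the truncated exchange of Lemma~\ref{lem:truncated} at $(n,m,q)=(3,3,2)$) and $9c$ (long removal at count five), followed by the same vertex-only witness and the paper's seven-item instance with one extra item. That extra partner $\bigl((2a+b)/3,8\bigr)$ is unnecessary, since $h_2=h_3=b$ makes $2$ a non-vertex and the raw value $2H/3\le a+b$ suffices; it is harmless only because $H\ge6c$ and $b>c$ give $(2a+b)/3+3c\le H$ for the partner with three shorts, not ``by $a\ge b$'' as you write, just as one long with three shorts needs $H\ge 9c/2$.
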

\begin{proof}
We establish the thresholds for $K=4$, larger caps, uncapped realizability, and vertex-only capped EC in turn. Since $a\ge b>c$, we have $H\ge3b>3c$. The terminal vertex $3$ is present in both hulls even when $a=b$.

\medskip\noindent\textbf{(A) Necessity at cap four.} Lemma~\ref{lem:truncated} (Section~5.3; its proof does not use this theorem) at $(n,m,q)=(3,3,2)$ applies because $h_3=b>c=\rho_1$. It requires
\[
 H\ge \frac23H+2c,\qquad\text{equivalently }H\ge6c.
\]

\medskip\noindent\textbf{(B) Construction at cap four.} For $H\ge6c$, use seven items
\[
 (H/3,4)^3,\quad(c,1)^3,\quad(a,12),\qquad D=12,\ R=3.
\]
Only the three unit-size items fit $R$, and they fit together, giving $H_r=(0,c,2c,3c)$ with $b_r=3\le K-1$. The solo item $(a,12)$ fits alone and with nothing else. It gives the larger count-one value $a$, since $H/3\le a$ follows from $b\le a$. The three size-four items fill $D$ and have value $H$.

The raw count-two value is $2H/3$, since $H\ge6c$ gives $H/3\ge2c\ge c$ and the solo item cannot be paired. The distinction between a raw value and a hull value is explicit:
\[
 U^w(2)=\frac{2H}{3}\ \le\ a+b
 =\frac{a+H}{2}=H_w(2).
\]
The right-hand side is the chord value between the attained points $(1,a)$ and $(3,H)$. Thus the prescribed count-two hull value does not require an attaining pair. Mixed triples have value at most $H$, again because $H/3\ge c$.

The only feasible four-sets have two size-four items and two shorts, or one size-four item and three shorts. Their values are $2H/3+2c$ and $H/3+3c$, respectively. Both are at most $H$ if and only if $H\ge6c$. Three size-four items already fill $D$, and the solo item cannot occur in a four-set. Hence the first larger maximum occurs at count three and the two hulls are the prescribed targets.

\medskip\noindent\textbf{(C) Exclusion at other caps when $H<9c$.} For $K\le3$, the required remainder terminal count three is dimensionally impossible. For $K\ge5$, suppose a realization existed. A larger attaining triple has value $H$. If it contained a short item, removing that item, worth at most $c$, would leave a pair worth at least $H-c>a+b$, since $b>c$. Thus its three items are long. Its cheapest item has value at most $H/3<3c$. Replacing it by a remainder-attaining triple, which is disjoint from the long triple, gives by Lemma~\ref{lem:exchange}(a) a feasible five-set worth more than $H$. This contradicts the flat extension of $H_w$. The argument also excludes an uncapped realization whenever $H<9c$.

\medskip\noindent\textbf{(D) Every cap $K\ge4$ and the uncapped model when $H\ge9c$.} We reuse the seven-item instance above. Beyond count four, its only feasible sets have two size-four items and all three shorts: their size is $11$ and their value is $2H/3+3c\le H$. No six-set fits, since even the three size-four items and three shorts have size $15$; the solo item cannot be paired. Thus no newly admissible set changes either hull. This one instance works uncapped and at every $K\ge4$. Conversely, \eqref{eq:LR} at $(n,m)=(3,3)$ requires $H\ge2H/3+3c$, hence $H\ge9c$ in the uncapped model.

\medskip\noindent\textbf{(E) The vertex-only threshold at cap four.} In any admissible witness, every member of $X_3$ has value at least $H-H_w(2)=b>c$: this follows by applying its subset bound to the complementary pair. Thus every member is long and downward closure gives $\mathcal F_3=\{\varnothing\}$. The overlap with $Z_3$ is empty since $\mathcal F_3=\{\varnothing\}$. Every pair in $X_3$ triggers (EC5) with the remaining member as $y$. The dearest member has value at least $H/3$, so (EC5) at count four forces $H\ge H/3+3c$, or $H\ge9c/2$.

Conversely, for $H\ge9c/2$ take disjoint all-long witnesses $X_3$ with three values $H/3$, and $X_1$ with value $a$ only when $1$ is a vertex, together with $Z_3$ of three values $c$. Let each declared local family consist only of the empty set and all overlaps be empty. If $a=b$, the larger hull is linear and there is no $X_1$. The subset bounds follow from $H/3\le a$ and $2H/3\le a+b$; the remainder bounds and dominance hold. Long removal from $X_1$, when present, produces $Z_3$ of value $3c\le H$. Long removal from $X_3$ has count five and is omitted at $K=4$. Pair-complement exchange from $X_3$ has count four and is valid exactly when $H/3+3c\le H$. This proves acceptance and completes all four cases.
\end{proof}
\begin{example}\label{ex:cap-family}
For $(a,b,c)=(3,2,1)$ the targets are $(0,3,5,7)$ and $(0,1,2,3)$. This pair is feasible at $K=4$ and infeasible at every $K\ge5$. Thus enlarging the admissible counts can destroy the realizability of a \emph{specified pair of hulls}, even though it enlarges each fixed instance's feasible families. Claims that every uncapped obstruction automatically persists for every cap are false.
\end{example}
\subsection{Separation from vertex-only capped EC}
A subset of an $r$-vertex witness can generate an admissible exchange even when exchanging the entire witness would exceed the cap.
\begin{lemma}[Truncated remainder-vertex exchange]\label{lem:truncated}
Suppose $n$ is a $w$-vertex with $h_n>\rho_1$, $m\ge1$ is a $r$-vertex, and $1\le q\le m$ satisfies $n-1+q\le K$. Every capped realization satisfies
\begin{equation}\label{eq:truncated}
 \bar H_w(n-1+q)\ge\frac{n-1}{n}H_w(n)+\frac qm H_r(m).
\end{equation}
The bound adds a restriction beyond \eqref{eq:LR} only under a cap; uncapped, it follows from \eqref{eq:LR} at $(n,q)$ and concavity of $H_r$.
\end{lemma}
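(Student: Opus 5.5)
The plan mirrors the proof of Corollary~\ref{cor:LR}, but removes one long item and adds only a \emph{subset} $Q$ of the remainder witness, so that the output count $n-1+q$ stays admissible under the cap.

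\textbf{Step 1: the $n$-witness is all long.} Fix a capped realization. Since $n$ is a $w$-vertex, it is admissible ($n\le b_w\le K$), so we may choose a $D$-feasible set $X$ with $|X|=n$ and $G(X)=H_w(n)$. Suppose $X$ contained a short item $x$. Then $a_x\le\rho_1$, so the $(n-1)$-subset $X\setminus\{x\}$ would be worth at least $H_w(n)-\rho_1>H_w(n)-h_n=H_w(n-1)$, violating its bound. Hence every item of $X$ is long, and consequently $X$ is disjoint from every $R$-feasible set. Let $x$ be the cheapest item of $X$; then $a_x\le H_w(n)/n$.

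\textbf{Step 2: choose the best $q$-subset.} The count $m\le b_r\le K-1$ is admissible, so choose an $R$-feasible $Z$ with $|Z|=m$ and $G(Z)=H_r(m)$. Let $Q\subseteq Z$ consist of its $q$ most valuable items. Averaging gives $G(Q)\ge\frac qm H_r(m)$, since the top $q$ of $m$ nonnegative values have average at least the overall average. The set $Q$ is $R$-feasible and disjoint from $X$.

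\textbf{Step 3: apply long removal to $Q$.} By Lemma~\ref{lem:exchange}(a) applied to $X$ and $Q$, the set $(X\setminus\{x\})\cup Q$ is $D$-feasible. Its count is $n-1+q$, which is at most $K$ by hypothesis, so it is admissible in the capped $w$-problem. Its value is
\[
 H_w(n)-a_x+G(Q)\ \ge\ \frac{n-1}{n}H_w(n)+\frac qm H_r(m).
\]
This value must be at most $\bar H_w(n-1+q)$, which gives \eqref{eq:truncated}.

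\textbf{Step 4: the uncapped remark.} Uncapped, \eqref{eq:LR} at remainder count $q$ gives $\bar H_w(n-1+q)\ge\frac{n-1}nH_w(n)+H_r(q)$. Concavity of $H_r$ with $H_r(0)=0$ gives $H_r(q)\ge\frac qmH_r(m)$ for $q\le m$, so \eqref{eq:truncated} follows. Under a cap, \eqref{eq:LR} at $q$ is unavailable when $q$ is not a retained vertex and interpolation needs the omitted endpoint $m$; that is why the inequality is new there.

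\textbf{Main obstacle.} The only delicate point is the cap bookkeeping. One must check that the attaining sets at $n$ and $m$ are admissible, which follows from $b_w\le K$ and $b_r\le K-1$, and that the exchange output count is exactly $n-1+q$. The latter holds because $X\cap Q=\varnothing$ (all members of $X$ are long), so no overlap correction is needed.
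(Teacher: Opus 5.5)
Your proposal is correct and follows the paper's proof essentially step for step. You force the attaining $n$-set to be all long via $h_n>\rho_1$, exchange its cheapest member for the $q$ most valuable items of a remainder-attaining $m$-set using Lemma~\ref{lem:exchange}(a), and obtain the uncapped remark from the same concavity bound $H_r(q)\ge(q/m)H_r(m)$ that the paper invokes. Your explicit cap bookkeeping (admissibility of the counts $n$, $m$ and $n-1+q$, and the empty overlap) spells out details the paper leaves implicit. The paper's proof also notes that the same argument runs inside the subset-enriched witness system, but the statement itself does not require that.
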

\begin{proof}
We exchange only the most valuable permitted remainder items.
The $w$-attaining $n$-set is all long: removing any hypothetical short would violate the bound at count $n-1$. Its cheapest member is worth at most $H_w(n)/n$. The $q$ most valuable items of a $r$-attaining $m$-set have total value at least $(q/m)H_r(m)$, form an $R$-feasible subset, and are disjoint from the long set. Removing the cheapest long and inserting this subset gives the claimed admissible count and bound. The same argument runs inside the subset-enriched witness system: (EC1) forces $X_n$ to be all long and every overlap to be empty, and (EC4$\prime$) with $Q$ the $q$ most valuable members of $Z_m$ gives \eqref{eq:truncated}, so the bound is also necessary for the subset-enriched capped closure.
\end{proof}
\begin{corollary}[Vertex-only capped EC is not sufficient]\label{cor:cap-EC}
For rational $a\ge b>c>0$ and $9c/2\le a+2b<6c$, the pair
\[
 H_w=(0,a,a+b,a+2b),\qquad H_r=(0,c,2c,3c)
\]
satisfies $\mathrm{EC}^{v}_4$ but is realizable neither at $K=4$, nor at another integer cap, nor uncapped.
\end{corollary}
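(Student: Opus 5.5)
The corollary is case (ii) of Theorem~\ref{thm:cap-family}. With $H=a+2b$, the hypothesis $9c/2\le H<6c$ is exactly the band of that case. So the plan is to read off the three claims from the proof of that theorem and check that each cited step applies in this band.

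\emph{Membership in $\mathrm{EC}^{v}_4$.} I would reuse part (E) of the proof of Theorem~\ref{thm:cap-family}. The witness consists of:
\begin{itemize}
\item an all-long $X_3$ with three values $H/3$;
\item an $X_1$ of value $a$ when $1$ is a $w$-vertex (this fails only when $a=b$);
\item $Z_3$ with three values $c$;
\item trivial declared families $\mathcal F_n=\{\varnothing\}$ and empty overlaps.
\end{itemize}
The checks run in order. First, dimensional admissibility: $b_w=3\le4$ and $b_r=3\le3$. Next, dominance and the subset bounds, which follow from $H/3\le a$, $2H/3\le a+b$ and $c<b\le a$. Long removal from $X_1$ with $Z_3$ outputs count $3$ and value $3c\le H$. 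Long removal from $X_3$ has output count $5>4$, so it is omitted. Pair-complement exchange from $X_3$ outputs count $4$, and it holds precisely because $H\ge 9c/2$. Exchanges with the other $r$-vertex $m=1$ give smaller outputs of the same type, so they hold as well.

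\emph{Non-realizability at $K=4$.} I would apply Lemma~\ref{lem:truncated} with $(n,m,q)=(3,3,2)$, as in part (A). Its hypothesis $h_3=b>c=\rho_1$ holds, and the output count $n-1+q=4$ is admissible. The lemma then forces $H\ge 2H/3+2c$, that is $H\ge6c$, which contradicts $H<6c$.

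\emph{Non-realizability at other caps and uncapped.} For $K\le3$ the remainder terminal count $3$ exceeds $K-1$, so these caps are dimensionally excluded. For $K\ge5$ and for the uncapped model, I would use part (C), whose hypothesis $H<9c$ is implied by $H<6c$. The argument there runs as follows. A larger attaining triple must be all long, since dropping a short item leaves value above $a+b$. Its cheapest member is worth at most $H/3<3c$. Swapping that member for the disjoint remainder triple, via Lemma~\ref{lem:exchange}(a), produces an admissible five-set worth more than $H$, which is impossible.

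The only step needing care is confirming in the first part that every exchange clause retained at $K=4$ is checked. In particular, the clause pairing $X_3$ with $Z_1$ has output count $2$ and value $H/3+c\le a+b$. This holds because $c<b$ and $H/3\le(a+H)/2-$ slack, i.e.\ $H/3+c\le 2H/3\le a+b$, using $H\ge 3c$.
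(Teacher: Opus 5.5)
Your proposal is correct and follows the paper's route. The paper's proof simply identifies the corollary as case (ii) of Theorem~\ref{thm:cap-family}, and you unpack exactly the parts (A), (C) and (E) of that theorem's proof that establish this case. One small slip: since $H_r=(0,c,2c,3c)$ is linear, its only positive vertex is $m=3$. There is therefore no $Z_1$, and your closing check of an $X_3$--$Z_1$ clause is superfluous, though harmless because that inequality holds anyway.
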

\begin{proof}
We apply the classification with its endpoint included.
This is case \textup{(ii)} of Theorem~\ref{thm:cap-family}. In particular, $a=b$ is allowed; its larger witness set has only the terminal vertex.
\end{proof}
\quantifierbox{Corollary~\ref{cor:cap-EC} concerns the vertex-only capped truncation, which drops exchanges whose output count exceeds $K$. It does not answer Question~\ref{q:ec-three}. The pairs are not realizable at $K=4$, although a truncated witness exists. This is the opposite target-existence pattern from Theorem~\ref{thm:trade-family}.}
\begin{example}
The targets $(0,4,7,10),(0,2,4,6)$ and $(0,5,8,11),(0,2,4,6)$ belong to this family. Another false positive, outside this family, $(0,5,9,13),(0,3,5,7)$, is excluded by the same lemma: $13<2(13)/3+2(7)/3=40/3$. Its archived vertex-only witness at $K=4$ has all-long $X_1=(5)$ and $X_3=(13/3,13/3,13/3)$, $Z_1=(3)$ and $Z_3=(7/3,7/3,7/3)$, empty overlaps, and declared families containing only the empty set.
\end{example}
The subset-enriched capped closure applies both exchanges to every subset of each remainder witness; by Lemma~\ref{lem:capped-chain}, the necessity of \eqref{eq:LR}--\eqref{eq:PC} and \eqref{eq:truncated} for enriched witnesses, and the exact positive certificates of Appendix~\ref{app:caps}, it coincides with realizability on the slope-five dominated domain at $K=3,\dots,6$. Beyond the particular family treated next, it remains a relaxation of target realizability.

\paragraph{Exact enriched closure on the classified family.}
For the family of Theorem~\ref{thm:cap-family}, the subset-enriched capped closure at $K=4$ is equivalent to realizability, both holding exactly when $H\ge6c$.
To see necessity directly inside the witness system, each member of $X_3$ has value at least $b>c$, so all are long, $\mathcal F_3=\{\varnothing\}$ and every overlap with $Z_3$ is empty. The two most valuable members of $Z_3$ total at least $2c$; removing the cheapest member of $X_3$, worth at most $H/3$, and inserting that pair gives an admissible count-four enriched exchange. Its bound is $H\ge2H/3+2c$, or $H\ge6c$. Sufficiency follows from the seven-item realization and Lemma~\ref{lem:capped-chain}. This is an infinite-family result about existence of a target, not completion of every specified witness, and it asserts no general sufficiency outside this family.
\section{Decidability and complexity}\label{sec:complexity}
We use an explicit encoding: the input lists every rational entry of $H_w,H_r$ in binary, together with $b_w,b_r$ and $K$ when present. Compressed vertex lists with binary-encoded abscissae define a different problem and are not treated.
\begin{proposition}[Fixed-witness completion]\label{prop:completion-lp}
Fix the values, globally consistent identities, vertex sets, long/short labels and declared $r$-feasible families of an EC witness. Its size completion is decided by a rational linear program with $|E|+2$ variables and at most exponentially many rows.
\end{proposition}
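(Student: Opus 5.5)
The plan is to write the completion question as a feasibility problem in the unknown sizes and capacities, and then remove the strict inequalities by a homogeneous margin. Once the values, identities, vertex sets, labels and declared families are fixed, every requirement in Definition~\ref{def:fwc} is a linear inequality in the item sizes $d_i$ ($i\in E$) and the two capacities $R,D$. The key observation is that the constraints split into two kinds.

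The first kind are required fits, each a weak inequality:
\begin{itemize}[label={}]
\item $d(X_n)\le D$ and $d(Z_m)\le R$ for each vertex set;
\item $d(S)\le R$ for each declared member $S\in\mathcal F_n$, and $d_i\le R$ for each short label;
\item $d_i\le D$ for every item.
\end{itemize}

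The second kind are required nonfits, each a strict inequality:
\begin{itemize}[label={}]
\item $d(S)>D$ for every $S\subseteq E$ whose value violates $G(S)\le\bar H_w(|S|)$;
\item $d(S)>R$ for every $S$ with $G(S)>\bar H_r(|S|)$;
\item $d(S)>R$ for each subset of some $X_n$ not declared in $\mathcal F_n$, and $d_i>R$ for each long label.
\end{itemize}

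Finally we impose $d_i>0$ and $0\le R<D$. With a cap, we keep only the sets of admissible cardinality, as in Definition~\ref{def:fwc}. Every member of a threshold family either fits or not, and the value bounds single out exactly which sets must not fit. Hence, with values and identities fixed, a size completion exists precisely when this system is feasible. This equivalence is what Theorem~\ref{thm:fwc} uses. There is at most one row per subset of $E$ and per threshold, plus the per-item rows, so the count is at most exponential in $|E|$.

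The main step is eliminating strictness while keeping the variable count at $|E|+2$. The system is homogeneous: scaling all of $(d,R,D)$ by a positive factor preserves every constraint. I would introduce one extra variable $\varepsilon$ and write each strict row $d(S)-D>0$ as $d(S)-D\ge\varepsilon$. Positivity becomes $d_i\ge\varepsilon$ and $D-R\ge\varepsilon$. I would then add the normalization $\varepsilon\le1$ and maximize $\varepsilon$.

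The original system is feasible if and only if the optimum is positive. If the original system has a solution, then $\min$ of all strict slacks is positive; rescaling makes it at least $1$, which gives $\varepsilon=1$. Conversely, any feasible point with $\varepsilon>0$ satisfies all the strict rows. The LP is always feasible, since the all-zero point with $\varepsilon=0$ satisfies it.

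To land on exactly $|E|+2$ variables, I would use the homogeneity once more to fix $D=1$, or equivalently absorb $\varepsilon$. This leaves the variables $d_i$, $R$, and $\varepsilon$ with $D$ normalized.

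Rationality follows because every coefficient is $0$ or $\pm1$, so an optimal vertex is rational. Any rational sizes can then be scaled to integers, as in Section~\ref{sec:setting}.

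I expect the only delicate point to be the correctness of the nonfit list. In particular, I need to check that the pair-exclusion and long-label conditions for witness sets are exactly those induced by the declared families. Everything else is bookkeeping.
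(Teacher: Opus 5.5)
Your proposal is correct and is essentially the paper's proof: normalize $D=1$, keep the required fits ($d(X_n)\le1$, $d(Z_m)\le R$, declared members of $\mathcal F_n$) as weak rows, impose every required nonfit (value-violating subsets at either threshold, undeclared subsets, long labels) together with positivity and $R<D$ with a common margin $\varepsilon$, and maximize $\varepsilon$. One small correction: once $D=1$ is fixed you can no longer rescale the least strict slack $\delta$ to $1$, and the zero point need not be feasible. So take $\varepsilon=\min\{\delta,1\}$ from a completion scaled to $D=1$, and read infeasibility of the program as a negative answer. Alternatively, fix $\varepsilon=1$ and keep $D$ free, which also gives $|E|+2$ variables.
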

\begin{proof}
We separate required fits and nonfits with a positive common margin.
Normalize $D=1$ and maximize $\varepsilon$ over $d_i,R,\varepsilon$, with
$\varepsilon\le d_i\le1$, $0\le R\le1-\varepsilon$, and $0\le\varepsilon\le1$.
Require $d(X_n)\le1$, $d(Z_m)\le R$, and all declared local fit and nonfit relations. A required nonfit has gap at least $\varepsilon$. For every subset whose value exceeds its larger hull bound impose $d(S)\ge1+\varepsilon$; for every subset whose value exceeds its remainder bound impose $d(S)\ge R+\varepsilon$. Respect the relevant caps when forming these rows. No further subset constraints are needed: any unclassified subset with value at most its bound may fit or not fit. A completion gives a positive minimum of finitely many strict gaps and positive sizes, hence a feasible $\varepsilon>0$. Conversely a positive optimum enforces all strict relations, and the bounds and vertex attainments give a full witness. Rational feasibility gives rational sizes, which can be scaled to integers.
\end{proof}

\begin{lemma}[Small rational realizations]\label{lem:bits}
A realizable pair has a realization with at most $N^*$ items and polynomial encoding length in the explicit input. More precisely, for a fixed feasible complete fit pattern on $N$ items, the normalized sizes and thresholds can be chosen with a common denominator at most
\[
 Q_N=\left\lceil (N+2)^{(N+2)/2}\right\rceil.
\]
\end{lemma}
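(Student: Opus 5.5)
The plan is to combine pruning with a vertex-solution argument for a linear program. By Lemma~\ref{lem:prune}, a realizable pair has a realization on $N\le N^*$ items. I then fix its complete fit pattern: the families $\mathcal D,\mathcal R\subseteq 2^E$ of subsets fitting $D$ and fitting $R$. The values $a_i$ need no new choice. Each retained item lies in some vertex attaining set, and the attaining sets and subset bounds only constrain values through the target entries, so I keep the values of the pruned realization. Those values are rationals whose encoding I bound separately. The cleanest route is to make the values themselves solutions of a small linear system, namely the vertex equalities together with tight subset bounds. If that is awkward, I would simply note that every item value can be replaced by a combination forced by the attaining sets, and bound its size by the same Hadamard argument below. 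The main content is therefore the sizes.

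For the sizes, I normalize $D=1$ and consider the polyhedron in the variables $(d_1,\dots,d_N,R,\varepsilon)$ from Proposition~\ref{prop:completion-lp}. Its constraints are $d(S)\le 1$ for $S\in\mathcal D$, $d(S)\ge 1+\varepsilon$ for $S\notin\mathcal D$, $d(S)\le R$ for $S\in\mathcal R$, $d(S)\ge R+\varepsilon$ for $S\notin\mathcal R$, together with $\varepsilon\le d_i\le 1$ and $0\le R\le 1-\varepsilon$. Every row has coefficients in $\{-1,0,1\}$ and right-hand sides in $\{0,1\}$. The realization supplies a feasible point with $\varepsilon>0$, so the LP maximizing $\varepsilon$ is feasible and bounded, since $\varepsilon\le 1$. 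It therefore has an optimal vertex, and at that vertex $\varepsilon>0$. Such a vertex solves a nonsingular $(N+2)\times(N+2)$ subsystem of tight rows, and by Cramer's rule every coordinate is a ratio with common denominator $|\det M|$. Any point with $\varepsilon>0$ realizes the same fit pattern, so the vertex is again a realization.

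The step I expect to be the main obstacle is the determinant bound. Every entry of $M$ lies in $\{-1,0,1\}$, so each row has Euclidean norm at most $\sqrt{N+2}$. Hadamard's inequality then gives $|\det M|\le (N+2)^{(N+2)/2}$, and taking the ceiling gives $Q_N$. The care here is bookkeeping. I must check that the rows really are $0/\pm1$: the row $d(S)-R-\varepsilon\ge 0$ is, and $R\le 1-\varepsilon$ becomes $R+\varepsilon\le1$. I must also check that the numerators, which are determinants with one column replaced by right-hand sides in $\{0,1\}$, satisfy the same bound. Once this holds, integer sizes are obtained by multiplying by $|\det M|$, so every size and both capacities have $O(N\log N)$ bits with $N\le N^*$.

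Finally, polynomial encoding length in the explicit input follows because $N^*\le b_w(b_w+1)/2+b_r(b_r+1)/2$ is polynomial in the listed length. The explicit encoding lists every entry, so $b_w$ and $b_r$ are at most the input length. The number of items, the bits per size, and the value encodings (bounded through the target entries) are then all polynomial. The same argument applies verbatim under a cap, with the fit pattern restricted to admissible cardinalities.
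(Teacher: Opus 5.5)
Your size argument is the paper's proof: prune to $N\le N^*$ items, fix the complete fit pattern, take an optimal vertex of the positive-margin LP in $(d,R,\varepsilon)$ with $0/\pm1$ rows, bound the Cramer denominator by Hadamard's inequality, and finish with $N^*\le b_w(b_w+1)/2+b_r(b_r+1)/2$. For the values, drop both the idea of keeping the pruned realization's values, whose encoding length need not be polynomial, and the fallback that values are ``forced'' by the attaining sets, which fix only their sums; instead, as your ``cleanest route'' suggests and as the paper does, take a vertex of the nonempty bounded polytope cut out by $0\le a_i\le H_w(1)$, the subset upper bounds on all fitting sets of the fixed pattern, and the attaining equalities, so that the vertex is fixed by $N$ independent $0/\pm1$ rows whose right-hand sides are target entries.
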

\begin{proof}
We bound a rational vertex separately for sizes and values.
Prune first. Fix the complete fit pattern of a realization and normalize $D=1$. Use the positive-margin size polytope from Proposition~\ref{prop:completion-lp}, but now classify every subset at each threshold. Its $N+2$ variables are bounded; every matrix entry and right-hand-side entry is $0$, $1$, or $-1$. The original finite strict representation makes a positive margin possible. The maximum margin is therefore positive at some vertex. Cramer's rule expresses the vertex coordinates with common denominator equal to the absolute determinant of an active full-rank $(N+2)$-row subsystem. Hadamard's inequality bounds this determinant by $(N+2)^{(N+2)/2}$. Scaling by it gives positive integer sizes and integer capacities with $D\le Q_N$.

For the values, fix the attaining sets and retain the subset upper bounds, attaining equalities, and $0\le a_i\le H_w(1)$. This nonempty bounded rational polytope has a rational vertex. Its coefficient matrix has entries $0,1,-1$ and at most $N$ independent active rows determine a vertex. Clearing the input denominators and applying Cramer's rule again gives polynomially many bits per value. Although the systems may have exponentially many rows, a vertex uses only linearly many independent rows. Finally $N^*\le b_w(b_w+1)/2+b_r(b_r+1)/2$, polynomial in the explicit input length. The same reasoning applies with caps by omitting size classifications that are unnecessary for cap-excluded subsets, or retaining the original complete geometric fit pattern.
\end{proof}
A small realization suggests an NP certificate, but we must also rule out every violating subset of that guessed instance. We do not know how to eliminate this universal check; the upper bound below retains it explicitly.
\begin{theorem}\label{thm:complexity}
Hull-pair realizability is decidable and belongs to $\Sigma_2^p$ under the explicit rational-sequence encoding. For fixed $b_w,b_r$ it is decidable in polynomial time in the bit length of the entries. These statements also hold with a specified cardinality cap. The fit-pattern enumeration used for fixed terminal counts has a constant doubly exponential in $N^*$; the assertion is a finiteness and bit-complexity result, not a practical algorithm.
\end{theorem}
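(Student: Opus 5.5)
The plan is to read realizability as an existential statement over a polynomial-size certificate, checked by a universal verification, and to use Lemma~\ref{lem:bits} to bound the certificate.

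\textbf{The $\Sigma_2^p$ formulation.} By Lemma~\ref{lem:prune} and Lemma~\ref{lem:bits}, a realizable pair has a realization with $N\le N^*$ items. In it, integer sizes and capacities and rational values all have encoding length polynomial in the explicit input, since $N^*\le b_w(b_w+1)/2+b_r(b_r+1)/2$ and $b_w,b_r$ are part of the input. The existential player guesses:
\begin{itemize}
\item the items $(a_i,d_i)$ and the capacities $D,R$;
\item for each positive vertex of either hull, an attaining subset, given as an index list.
\end{itemize}
Each attaining subset is checked in polynomial time: it must have the prescribed cardinality, fit its capacity (and the cap, when one is specified), and have the prescribed hull value.

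The universal player then ranges over all subsets $S\subseteq E$, encoded by $N$ bits, and both labels $\alpha\in\{w,r\}$. It checks that whenever $S$ fits the relevant capacity and has admissible cardinality, $G(S)\le\bar H_\alpha(|S|)$; this is a polynomial-time arithmetic test. By the hull equivalence stated in Section~\ref{sec:setting} (every feasible set obeys $\bar H_\alpha$, and every vertex is attained), together with Theorem~\ref{thm:fwc}, acceptance holds exactly for realizable pairs. Strict increase guarantees that $b_\alpha$ is the first maximizer, because attainment at $b_\alpha$ combined with the bounds forces $U^\alpha(b_\alpha)=H_\alpha(b_\alpha)>\bar H_\alpha(k)$ for $k<b_\alpha$.

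For caps, the same scheme applies with admissible counts, and Lemma~\ref{lem:bits} explicitly covers the capped case. Decidability follows immediately from membership in $\Sigma_2^p$.

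\textbf{Fixed terminal counts.} When $b_w,b_r$ are fixed, $N^*$ is a constant. We enumerate:
\begin{itemize}
\item every $N\le N^*$;
\item every complete fit pattern, meaning a pair of nested families $\mathcal D\supseteq\mathcal R$ of subsets of $[N]$ (there are at most $2^{2\cdot 2^{N}}$ of them, a constant that is doubly exponential in $N^*$);
\item every assignment of attaining sets to vertices.
\end{itemize}
For each pattern we decide common threshold representability with the positive-margin LP of Proposition~\ref{prop:completion-lp}. This LP has constant size and $0,\pm1$ coefficients, so its cost is constant.

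Given a representable pattern, the value constraints form a second LP in the $N$ value variables:
\begin{itemize}
\item $G(S)\le\bar H_w(|S|)$ for $S\in\mathcal D$ and $G(S)\le\bar H_r(|S|)$ for $S\in\mathcal R$, at admissible counts;
\item equalities at the chosen attaining sets;
\item $a_i\ge0$.
\end{itemize}
This LP has a constant number of variables and constraints, and its right-hand sides are the input rationals. Feasibility is therefore decided in time polynomial in their bit length, for instance by exact Gaussian elimination over the constantly many candidate bases. If any combination is feasible, Theorem~\ref{thm:fwc} gives a realization. Conversely, a pruned realization yields such a combination.

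\textbf{Main obstacle.} The delicate step is the polynomial bit bound underpinning the $\Sigma_2^p$ certificate. The size LP and the value LP may have exponentially many rows, so we rely on the vertex argument of Lemma~\ref{lem:bits}: only $O(N)$ active rows, combined with Cramer's rule and Hadamard's inequality. A second point to check is that the universal check correctly handles the equality subtlety at the cap, as in Theorem~\ref{thm:cap-large}. This is unproblematic here, because the verifier simply tests admissibility of each $S$ directly.
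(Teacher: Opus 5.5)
Your proposal is correct and follows essentially the paper's argument. In the $\Sigma_2^p$ part you guess a polynomial-size instance and attaining sets via Lemma~\ref{lem:bits}, and check one universally quantified subset together with a capacity bit. For fixed terminal counts you enumerate complete fit patterns and attaining sets, then solve the rational size and value systems. The differences are cosmetic: you use the cruder count $4^{2^N}$ where the paper uses $3^{2^N}$ for nested patterns, and you observe explicitly that the size LP does not depend on the input entries.
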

\begin{proof}
We use a polynomial-size instance followed by one universally quantified subset.
Guess a polynomial-length instance from Lemma~\ref{lem:bits}, together with attaining sets for the target vertices. Their cardinalities, values, and feasibility are checked in polynomial time using rational arithmetic. The universal witness is one subset of the guessed ground set, encoded in at most $N^*$ bits, together with one capacity bit. Since $N^*$ is polynomial in the explicit input, a polynomial-time predicate checks this one quantified subset: it does not enumerate all subsets. It checks that feasibility implies the corresponding hull bound, respecting the cap when present. This is a polynomial-time predicate with one existential block followed by one universal block, proving membership in $\Sigma_2^p$.

For fixed terminal cardinalities, $N^*$ is a constant. Enumerate ground-set sizes up to $N^*$, complete fit patterns and attaining sets. For each discrete choice solve its rational size and value systems. There are at most $3^{2^N}$ nested complete fit patterns on $N$ labeled items, before the finite choices of attaining sets. The number of choices is a constant independent of the entry bit length, and rational linear programming is polynomial in that bit length. Hence the resulting decision algorithm is polynomial for fixed $b_w,b_r$.
\end{proof}
\paragraph{Limits of numerical search.}
Although the determinant bound proves the existence of a finite complete search, it does not make a small chosen grid complete. A model with $D=60$ searches one grid. Its infeasibility, even with $N=N^*$, does not prove unrestricted non-realizability. Similarly, a fixed positive normalized gap excludes representations with smaller gaps unless a valid universal bound is used. Floating-point feasibility must be replaced by exact reconstructed-instance checks before it is counted as a positive certificate.
\paragraph{Hardness.} Membership in NP, NP-hardness and $\Sigma_2^p$-hardness remain open; the upper bound above implies none of them. The paragraph after Question~\ref{q:complexity} records what a reduction would need.

\section{Which inclusions are strict?}\label{sec:open}
The scalar-to-EC gap is settled by Theorem~\ref{thm:scalar-gap} and Remark~\ref{rem:scalar-gap-ec}; the EC-to-FWC gap remains open. We do not know whether a different scalar test can capture the information lost by the present ones.
\begin{question}\label{q:ec-three}
Is $\mathrm{EC}=\mathrm{FWC}$ for all rational targets with $b_w=3$ in the uncapped model?
\end{question}
\begin{question}\label{q:ec-general}
Is $\mathrm{EC}=\mathrm{FWC}$ for arbitrary terminal counts in the uncapped model?
\end{question}
The balanced-trade family does not give a negative answer: another witness realizes every displayed target. Completing arbitrary coherent EC witnesses is therefore too strong a strategy for these existential questions.
\begin{question}\label{q:complexity}
For variable terminal counts, is the explicitly encoded decision problem in NP, NP-hard, or $\Sigma_2^p$-hard?
\end{question}
A hardness reduction must constrain all realizations of its target hulls. The polynomial-bit bound alone neither removes the universal subset check nor supplies a rigidity construction.
\begin{question}\label{q:enriched}
Is the subset-enriched capped closure sufficient in any infinite rational regime beyond cases already characterized by full realizability?
\end{question}
Theorem~\ref{thm:cap-family} settles every cap on the linear-remainder family, and the enriched closure is exact there at $K=4$. However, neither this result nor finite agreement on the boxes supplies sizes for a general specified witness: the capped version of Theorem~\ref{thm:trade-family} already forbids that stronger conclusion.
\begin{question}\label{q:scalar-three-new}
Is there a scalar test, valid whenever some member of a $w$-attaining set is long rather than requiring $h_n>\rho_1$, which together with dominance and \eqref{eq:PC} characterizes realizability for $b_w=3$?
\end{question}
Theorem~\ref{thm:scalar-gap} rules out the tests of Section~\ref{sec:ec} and identifies a case any replacement must cover: at a terminal vertex whose slope equals $\rho_1$, and at count four rather than count five.
Table~\ref{tab:coverage} also shows that residual template coverage cannot be confined to one vertex type. T6 exposes a rational gap at $b_r=2$ absent from these integer residual cells. These are distinctions a characterization would have to address, not evidence that another finite box would settle it.

\section*{Reproducibility}
The manuscript contains the proofs and their definitions. The deposited archive contains the rational instances, exact-subset certificates, two verification methods with different hull algorithms, and the finite claim-to-evidence map. The fixed catalog was checked in three complete executions---two on Linux \texttt{x86\_64} and one on macOS arm64. Each execution applied both exact verifiers and independently regenerated all 30 publication-facing result files; all three produced the same 7,044 decisions (5,516 positive certificates and 1,528 exact exclusions) and the same derived outputs. Parameter-wide theorems depend on the proofs; the finite assertions in Appendix~\ref{app:computations} additionally depend on the indicated archive records. Computational checks do not replace proof reading or mathematical peer review.

The authors are responsible for the mathematical claims. Large language models assisted with typesetting, some drafting revisions, finite certificates, and some adversarial pre-submission review. The exact programs verify finite claims on the declared domains; general results rely on the mathematical proofs, not on a bounded computation. These checks are not independent human proof verification.

\paragraph{Data and code availability.}
The deposited archive contains the data and code needed for the finite checks. The versioned artifact is at \href{https://doi.org/10.5281/zenodo.22823629}{doi:10.5281/zenodo.22823629}; the code repository is \url{https://github.com/methodtrace/2026-inverse-knapsack-hull-pairs}, tag \texttt{v1.0.0}. The paper identifier, the artifact DOI and the repository are distinct records.
\appendix
\section{Two three-count templates}\label{app:templates}
For this appendix let $b_w=3$, $H=H_w(3)$, $\mathcal V$ be its positive vertex set, and $\Lambda=\max\{h_3,H_r(b_r)\}$. Dominance is assumed throughout. We use T1 to separate three large values from the short items; T2 compensates for a smaller terminal slope by letting the best short participate in the attaining triple. When $b_r=0$, set $\rho_1=0$.
\begin{theorem}[T1: triple, partner and solo]\label{thm:T1}
If some rational $\lambda_1$ satisfies
\[
 \max\{H/3,\ [\rho_1,h_2]_{2\in\mathcal V}\}
 \le\lambda_1\le\min\{h_1,H_w(2)-\rho_1,H-2\Lambda\},
\]
then the pair is realizable. The bracketed entries are omitted when $2\notin\mathcal V$.
\end{theorem}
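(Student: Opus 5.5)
The plan is to mimic the size patterns of Theorem~\ref{thm:AB} and the seven-item instance of Theorem~\ref{thm:cap-family}. The construction uses five kinds of items.
\begin{enumerate}[label=(\roman*)]
\item Unit-size short items of values $\rho_1,\dots,\rho_{b_r}$, with $R=\max\{b_r,1\}$.
\item A \emph{triple} $t_1,t_2,t_3$ of values $\lambda_1,\mu,\mu$, where $\mu=(H-\lambda_1)/2$, whose sizes sum exactly to $D$.
\item A \emph{solo} item of value $h_1$ and size $D$.
\item When $2\in\mathcal V$, a \emph{partner} $p$ of value $\pi=H_w(2)-\lambda_1$, whose size is chosen so that it fits with $t_1$ alone or with a single short, and with nothing else.
\item Concretely, I would take $t_2,t_3$ of equal size $s>R$, $t_1$ of size $s'$ with $s'+2s=D$, and $d_p=D-s'$. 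Here $s'\ge R+1$, so $p$ plus one short fits but $p$ plus two shorts does not. I would take $D$ large, e.g.\ $s=R+1$ plus a margin, so that $t_2,t_3$ are long while $t_1,t_2$ together still admit all $b_r$ shorts.
\end{enumerate}

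With these sizes, the remainder problem sees exactly the shorts, so $H_r$ is reproduced. The larger-problem attainments are as follows: the solo item gives count one, $t_1\cup\{p\}$ gives $H_w(2)$ when $2\in\mathcal V$, and the triple gives $H$. The main work is to verify that every $D$-feasible set obeys $\bar H_w$. First I would record the value inequalities extracted from the hypothesis.
\begin{itemize}
\item $\lambda_1\ge H/3$ gives $\mu\le\lambda_1\le h_1$, so every singleton is bounded by $h_1$.
\item $\lambda_1\le H-2\Lambda$ gives $\mu\ge\Lambda\ge\max\{h_3,H_r(b_r)\}\ge\rho_1$.
\item $\lambda_1\le H_w(2)-\rho_1$ bounds $t_1$ plus a short at count two.
\item $\lambda_1\ge h_2$ gives $\pi\le h_1$.
\item $\lambda_1\ge\rho_1$ gives $\pi+\rho_1\le H_w(2)$.
\item When $2\notin\mathcal V$, the raw pair $t_1t_2$ is worth $(H+\lambda_1)/2\le(H+h_1)/2=H_w(2)$, which is the chord value. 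So no partner is needed, and the hull at two need not be attained.
\end{itemize}

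Then I would enumerate feasible types by which long items they contain.
\begin{itemize}
\item The solo item fits alone.
\item The partner fits with $t_1$ only, or with one short. Both options are bounded as above.
\item The full triple admits nothing else, since it fills $D$.
\item Two triple members plus at most $b_r$ shorts are worth at most $\lambda_1+\mu+H_r(b_r)\le H$, because $\mu\ge H_r(b_r)$. At count two such a pair is bounded by $H_w(2)$, using either the partner attainment or the chord computation.
\item One triple member plus $j$ shorts is worth at most $\lambda_1+H_r(j)$. At $j=1$ this is bounded by the third hypothesis. For $j\ge2$ it is at most $\lambda_1+H_r(b_r)\le H-\Lambda\le H$.
\item Short-only sets obey dominance.
\end{itemize}
Concavity of $\bar H_w$ and its flat extension beyond $3$ then handle all counts, so the raw $w$-profile lies under $\bar H_w$ and attains every vertex. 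Since the target terminal value $H$ is first reached at count three by strict increase, $b_w=3$ is preserved.

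I expect the main obstacle to be the size bookkeeping rather than the values. The sizes must make $p$ incompatible with $t_2,t_3$: otherwise $p\cup\{t_1,t_2\}$ would be worth $H_w(2)+\mu>H$, because $\mu\ge h_3$. They must also let $t_1,t_2$ coexist with enough shorts without admitting $p$ plus two shorts. I would first try the explicit choice $s=b_r+2$, $s'=R+1$, $D=s'+2s$, and $d_p=2s$, and check these incompatibilities directly. If the partner-plus-two-shorts exclusion fails, I would give $p$ size $D-1$ and shrink $t_1$ to size $1$. Making $t_1$ short would then require adjusting the remainder side, so I would rather keep $s'>R$ and enlarge $s$. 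The case $b_r=0$ is included by taking no shorts, with $\rho_1=0$.
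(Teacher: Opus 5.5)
Your construction is essentially the paper's. For $b_r\ge1$ your explicit sizes $s'=R+1$, $s=R+2$, $D=3R+5$, $d_p=D-(R+1)$ are exactly the paper's. Your symmetric triple $(\lambda_1,\mu,\mu)$ is a valid alternative to the paper's $\lambda_3=\max\{\Lambda,H-2\lambda_1\}$, $\lambda_2=H-\lambda_1-\lambda_3$, because the hypothesis $H/3\le\lambda_1\le H-2\Lambda$ is exactly $\Lambda\le\mu\le\lambda_1$.

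There is, however, a gap in the feasibility bookkeeping. Your claim that $p$ plus two shorts does not fit is false, and it leads you to skip a feasible type. With $d_p=D-s'$, the partner with $j$ unit shorts fits if and only if $j\le s'$. Since $s'\ge R+1$, it fits with \emph{all} the shorts. No size choice in this architecture can prevent this: whenever $p\cup\{t_1\}$ fits $D$, $t_1$ is long and the shorts fit $R$ together, you get $d_p+d(\text{shorts})\le d_p+R<d_p+d_{t_1}\le D$. Your fallback of enlarging $s$ does not touch this, and shrinking $t_1$ to size one makes it short and corrupts $H_r$. What is needed is a value bound, which the paper supplies. For $j\ge2$ the set has count at least three and value at most
$\pi+H_r(b_r)=H_w(2)-\lambda_1+H_r(b_r)\le H_w(2)<H$,
since $H_r(b_r)\le\Lambda\le\mu\le\lambda_1$. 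The case $j=1$ is your bound $\pi+\rho_1\le H_w(2)$.

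The count-two bound for the pair $\{t_1,t_2\}$ when $2\in\mathcal V$ is also misjustified. Attaining $H_w(2)$ with $p\cup\{t_1\}$ says nothing about the value of a different pair. The correct argument covers both cases and uses an inequality you already recorded: $\lambda_1+\mu=H-\mu\le H-h_3=H_w(2)$, because $\mu\ge\Lambda\ge h_3$. The chord computation then becomes unnecessary.

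A minor point: for $b_r=0$ your choice $s=b_r+2$ gives $s=s'=2$, so $p$ also fits with $t_2$ or $t_3$. This is harmless, since $\pi+\mu\le\pi+\lambda_1=H_w(2)$ and that pair already fills $D$. Taking $s=R+2$, as the paper does, avoids the special case.

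With these corrections your argument is a complete proof.
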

\begin{proof}
We choose the triple values first, then use sizes to permit only the desired partners.
Set $\lambda_3=\max\{\Lambda,H-2\lambda_1\}$ and $\lambda_2=H-\lambda_1-\lambda_3$. The bounds imply $\lambda_1\ge\lambda_2\ge\lambda_3\ge \Lambda>0$. Take unit-size short items of values $\rho_j$ and $R=\max\{b_r,1\}$. Add long items of values $\lambda_1,\lambda_2,\lambda_3$ and sizes $R+1,R+2,R+2$, respectively, and set $D=3R+5$. If $2\in\mathcal V$, add a partner of value $H_w(2)-\lambda_1$ and size $D-(R+1)$. If $1\in\mathcal V$, add a solo of value $h_1$ and size $D$.

The triple fills $D$. The partner fits with the first long exactly, but not with either other long. The solo fits alone. All singletons obey $h_1$: in particular the partner does because $\lambda_1\ge h_2$. For pairs, the largest pair of triple items is worth $H-\lambda_3\le H_w(2)$; a triple item with a short is worth at most $\lambda_1+\rho_1\le H_w(2)$; the partner with a short obeys the same bound because $\lambda_1\ge\rho_1$. The partner with the first long attains $H_w(2)$ when that count is a vertex. Short-only sets obey dominance.

For counts at least three, two triple items with any short items are worth at most $H-\lambda_3+H_r(b_r)\le H$. One triple item with short items is worth at most $\lambda_1+H_r(b_r)\le H$. The partner with short items is worth at most $H_w(2)-\lambda_1+H_r(b_r)\le H$, since $H_r(b_r)\le\lambda_3\le\lambda_1+h_3$. There are no other feasible mixed types. All required larger vertices are attained, and the short items give the $r$-profile.
\end{proof}
\begin{corollary}\label{cor:T1-region}
If $h_3>\rho_1$ and either $2\notin\mathcal V$ or $2H_r(b_r)\le h_1+h_3$, then the pair is realizable if and only if dominance and the all-long bounds hold.
\end{corollary}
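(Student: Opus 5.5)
The forward direction is immediate from what is already proved. Every realizable pair satisfies EC by Theorem~\ref{thm:ec}, and hence satisfies dominance and the all-long bounds of Corollary~\ref{cor:LR}. For the converse I do not plan a new construction. The plan is to show that, under the stated hypotheses, dominance and \eqref{eq:LR} force the interval for $\lambda_1$ in Theorem~\ref{thm:T1} to be nonempty, so that T1 supplies the realization. Checking nonemptiness means checking each lower entry $H/3$, $\rho_1$, $h_2$ against each upper entry $h_1$, $H_w(2)-\rho_1$, $H-2\Lambda$, where $\rho_1$ and $h_2$ are present only when $2\in\mathcal V$.

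The first step extracts the two scalar facts the comparisons need. Since $h_3>\rho_1$, the vertex $n=3$ satisfies the all-long antecedent. Applying \eqref{eq:LR} at $m=b_r$, and using $2+b_r\ge3$ so that $\Hb(2+b_r)=H$ when $b_r\ge1$, gives
\[
 H\ \ge\ \tfrac23H+H_r(b_r),\qquad\text{that is,}\qquad H_r(b_r)\le H/3.
\]
When $b_r=0$ this holds trivially, and likewise $\rho_1=0$. Concavity of $H_w$ gives
\[
 h_3\le H/3\le h_1,\qquad H_w(2)\ge 2H/3,\qquad h_1\ge h_2\ge h_3>\rho_1.
\]
Together these yield $\Lambda\le H/3$ and, since $\rho_1\le H_r(b_r)$, also $\rho_1+2H_r(b_r)\le H$. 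I expect this step to be the only real obstacle. Everything else is concavity bookkeeping, but the bound $H_r(b_r)\le H/3$ genuinely needs the exchange inequality together with the flat extension of $\Hb$ past $b_w=3$.

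The second step runs through the pairwise comparisons.
\begin{itemize}
\item \emph{Lower entry $H/3$.} We have $H/3\le h_1$ by concavity. We have $H/3\le H_w(2)-\rho_1$ because $H_w(2)\ge2H/3$ and $\rho_1\le H/3$. We have $H/3\le H-2\Lambda$ because $\Lambda\le H/3$.
\item \emph{Lower entry $\rho_1$ (when $2\in\mathcal V$).} We have $\rho_1\le h_1$. We have $2\rho_1<h_1+h_2=H_w(2)$, so $\rho_1\le H_w(2)-\rho_1$. For the last upper entry, $H\ge3h_3>\rho_1+2h_3$ handles $\Lambda=h_3$, and $\rho_1+2H_r(b_r)\le H$ handles $\Lambda=H_r(b_r)$; hence $\rho_1\le H-2\Lambda$.
\item \emph{Lower entry $h_2$ (when $2\in\mathcal V$).} We have $h_2\le h_1$, and $h_2\le H_w(2)-\rho_1$ is equivalent to $\rho_1\le h_1$. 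The comparison $h_2\le H-2\Lambda$ is equivalent to $2\Lambda\le h_1+h_3$. For $\Lambda=h_3$ this is concavity. For $\Lambda=H_r(b_r)$ it is exactly the hypothesis $2H_r(b_r)\le h_1+h_3$, which is needed only in this case, where $2\in\mathcal V$.
\end{itemize}
Thus the maximum of the lower entries is at most the minimum of the upper entries. Choosing any rational $\lambda_1$ in this nonempty interval, Theorem~\ref{thm:T1} produces a realization, which completes the equivalence.
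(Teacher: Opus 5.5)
Your proposal is correct and follows the paper's route. Necessity comes from Corollary~\ref{cor:LR}, and sufficiency from checking that the $\lambda_1$-interval of Theorem~\ref{thm:T1} is nonempty, with the hypothesis $2H_r(b_r)\le h_1+h_3$ entering only through $h_2\le H-2\Lambda$ when $2\in\mathcal V$. The one small difference is that the paper obtains $H/3+\rho_1\le H_w(2)$ from \eqref{eq:LRb} at $m=1$, whereas you derive it from $\rho_1\le H_r(b_r)\le H/3$ and concavity, so your sufficiency argument needs only \eqref{eq:LR} at $(n,m)=(3,b_r)$.
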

\begin{proof}
We check every endpoint of the interval in T1.
Necessity is already proved. For sufficiency, the all-long bounds at $n=3$ imply $H_r(b_r)\le H/3$ and, by \eqref{eq:LRb} at $m=1$, $H/3+\rho_1\le H_w(2)$; when $b_r=0$ these follow directly from concavity. Hence $\Lambda\le H/3$. If $2\notin\mathcal V$, choose $\lambda_1=H/3$. Otherwise choose $\lambda_1=\max\{H/3,h_2\}$: $H/3$ alone can fail because the pair-attainment lower bound $h_2$ may be active. The term $\rho_1$ is smaller than $h_2$. Concavity gives $\lambda_1\le h_1$, while $h_2\le H_w(2)-\rho_1$ because $h_1>\rho_1$. Finally $h_2\le H-2\Lambda$ is equivalent to $2\Lambda\le h_1+h_3$, which follows from the stated assumption and $h_1\ge h_3$. Thus the interval in T1 is nonempty.
\end{proof}
\begin{theorem}[T2: two long items and a best short]\label{thm:T2}
Assume $b_r\ge1$ and $h_3\le\rho_1$. Suppose $\lambda_1\ge\lambda_2>0$ are rational, $\lambda_1+\lambda_2=H-\rho_1$, and
\[
 \lambda_1\le h_1,\qquad \lambda_1+\rho_1\le H_w(2),\qquad
 H_r(b_r)\le\lambda_2+\rho_1.
\]
If $2\in\mathcal V$, additionally require at least one of
\[
 h_3=\rho_1;\qquad \lambda_1=H_w(2)-\rho_1;\qquad
 \bigl(\lambda_1\ge\max\{h_2,\rho_1\},\ H_r(b_r)\le\lambda_1+h_3\bigr).
\]
Then the pair is realizable.
\end{theorem}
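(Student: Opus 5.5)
The plan is a direct construction in the style of Theorem~\ref{thm:AB}(B) and Theorem~\ref{thm:T1}. The short items are $b_r$ unit-size items of values $\rho_1,\dots,\rho_{b_r}$, with $R=\max\{b_r,1\}$; they realize $H_r$ by themselves. The long items are:
\begin{itemize}
\item $\ell_1=(\lambda_1,R+1)$ and $\ell_2=(\lambda_2,R+2)$, with $D=2R+4$;
\item a solo $(h_1,D)$ when $1\in\mathcal V$;
\item when $2\in\mathcal V$ and only the third alternative is available, a partner $p=(H_w(2)-\lambda_1,R+3)$.
\end{itemize}
Every long item exceeds $R$, so exactly the short items fit $R$ and the remainder profile is the prescribed one. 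In $D$, the sizes give the intended fit pattern: $\ell_1,\ell_2$ fit together with exactly one unit short, and this triple fills $D$. Each $\ell_i$ alone fits with all $b_r\le R$ shorts. The partner fits with $\ell_1$ exactly and with any $\le R+1$ shorts, but not with $\ell_2$, nor with $\ell_1$ plus a short. The solo fits only alone.

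\emph{Attainment.} Taking the best short in the triple gives $\lambda_1+\lambda_2+\rho_1=H$ at count three. The solo gives $h_1$ at count one; if $1\notin\mathcal V$ the hull is linear from $0$ to the next vertex and no attainment there is needed. The three alternatives for count two match three attaining pairs:
\begin{itemize}
\item if $h_3=\rho_1$, then $H_w(2)=H-\rho_1=\lambda_1+\lambda_2$, attained by $\{\ell_1,\ell_2\}$;
\item if $\lambda_1=H_w(2)-\rho_1$, then $\ell_1$ with the best short attains $H_w(2)$;
\item otherwise $\{\ell_1,p\}$ attains $H_w(2)$.
\end{itemize}

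\emph{Upper bounds, by type.} Singletons obey $h_1$: we have $\lambda_2\le\lambda_1\le h_1$, and for the partner $H_w(2)-\lambda_1\le h_1$ iff $\lambda_1\ge h_2$. Short-only sets obey dominance. For pairs:
\begin{itemize}
\item $\{\ell_1,\ell_2\}$ is worth $H-\rho_1\le H-h_3=H_w(2)$;
\item a long plus a short is worth at most $\lambda_1+\rho_1\le H_w(2)$;
\item the partner plus a short is worth at most $H_w(2)-\lambda_1+\rho_1\le H_w(2)$, using $\lambda_1\ge\rho_1$.
\end{itemize}
For counts $\ge3$ the feasible mixed types are:
\begin{itemize}
\item both longs with one short, worth at most $H$;
\item one $\ell_i$ with shorts, worth at most $\lambda_1+H_r(b_r)$; this is at most $\lambda_1+\lambda_2+\rho_1=H$ by the hypothesis $H_r(b_r)\le\lambda_2+\rho_1$;
\item the partner with shorts, worth at most $H_w(2)-\lambda_1+H_r(b_r)\le H_w(2)+h_3=H$, by $H_r(b_r)\le\lambda_1+h_3$.
\end{itemize}
Four-sets containing both longs do not fit. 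Hence every feasible $k$-set obeys $\bar H_w(k)$, every vertex is attained, and the stated equivalence (bounds plus vertex attainment) yields both hulls.

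The main obstacle is the count-two vertex. A single fit pattern must keep every pair and every long-plus-shorts set below the hull, while still providing an attaining pair. The three disjunctive hypotheses are exactly what make one of the three candidate pairs available. The first thing to check is that adding the partner in the third case creates no unintended feasible set: specifically that $p$ cannot join $\ell_2$, or $\ell_1$ together with a short. I would verify this size arithmetic first; the sizes $R+1,R+2,R+3$ with $D=2R+4$ are chosen so that it holds.
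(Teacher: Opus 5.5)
Your proposal is correct and follows the paper's proof essentially step for step. The construction is the same: unit shorts with $R=\max\{b_r,1\}$, longs $(\lambda_1,R+1)$ and $(\lambda_2,R+2)$ with $D=2R+4$, the partner $(H_w(2)-\lambda_1,R+3)$ added only in the third alternative, and the solo $(h_1,D)$. The verification also matches, with the same three count-two attainments and the same type-by-type upper bounds, each using the same hypothesis.
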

\begin{proof}
We let the best short complete the triple and use a partner only when count two must be attained.
Take the unit-size short items and $R=\max\{b_r,1\}$. Add long items $\lambda_1,\lambda_2$ of sizes $R+1,R+2$, and put $D=2R+4$. The two longs admit exactly one short. In the last displayed case, a partner of value $H_w(2)-\lambda_1$ and size $R+3$ is available; add it if needed for pair attainment. It fills $D$ with the first long and cannot fit with the second. Add a solo $(h_1,D)$ when $1\in\mathcal V$.

The two longs with the best short attain $H$. The pair of longs is bounded by $H_w(2)$ because $h_3\le\rho_1$, and a long with a short is bounded by $\lambda_1+\rho_1\le H_w(2)$. If the partner is present, its singleton value is at most $h_1$ because $\lambda_1\ge h_2$, and its value with a short is at most $H_w(2)$ because $\lambda_1\ge\rho_1$. Pair attainment at a vertex comes, respectively, from the pair of longs, the first long with the best short, or the partner with the first long.

Any single long with short items has value at most $\lambda_1+H_r(b_r)\le H$. The partner with short items has value at most $H_w(2)-\lambda_1+H_r(b_r)\le H$. Short-only sets obey dominance. Two longs admit at most one short; the partner with its compatible long admits none. Thus all $w$-vertex values are attained and no feasible set exceeds its hull bound.
\end{proof}
\section{Computations and exact finite certificates}\label{app:computations}
\subsection{Domains and the collapse of the uncapped hierarchy}
A target is specified by nonincreasing positive integer slopes. Every domain below has $b_w=3$. Each filtered domain is obtained by applying the filters stated below to the corresponding dominated domain; the dominated domains contain every pair satisfying dominance in the stated slope box, including the zero remainder hull. The filtered lists omit $b_r=0$, retain only targets satisfying \eqref{eq:LR} at each $w$-vertex $n$ with $H_w(n)>(n-1)H_w(1)+\rho_1$, and, when $H_w$ is linear with $H_w(1)=\rho_1$ and $\bar H_r(2)<2\rho_1$, require $H_r(b_r)\le2\rho_1$; these explicit filters define candidate domains only and are not applied to the dominated capped comparisons.

\begin{center}
\small
\begin{tabular}{p{6.1cm}rrrrr}
\toprule
Domain & Targets & $|L|$ & $|\mathrm{EC}|$ & $|\mathrm{FWC}|$ & Excluded\\
\midrule
Filtered: slopes $\le5$, $1\le b_r\le3$ & 924 & 898 & 898 & 898 & 26\\
Filtered: slopes $\le6$, $1\le b_r\le3$ & 2103 & 2030 & 2030 & 2030 & 73\\
Filtered: slopes $\le4$, $1\le b_r\le4$ & 472 & 449 & 449 & 449 & 23\\
\midrule
Dominated: slopes $\le5$, $0\le b_r\le3$ & 1089 & 933 & 933 & 933 & 156\\
Dominated: slopes $\le6$, $0\le b_r\le3$ & 2490 & 2086 & 2086 & 2086 & 404\\
Dominated: slopes $\le4$, $0\le b_r\le4$ & 613 & 469 & 469 & 469 & 144\\
\bottomrule
\end{tabular}
\end{center}
The cardinalities are intersections with the indicated finite domain. Every positive row has an exact instance. Every negative row has a strict rational violation of a proved bound; ``excluded'' never means a failed numerical search. A pending row would have neither an exact positive instance nor a proved exclusion. There are no pending rows.

\begin{proposition}[Certified finite agreement]\label{prop:finite}
On each of the six domains in the table, $\mathrm{FWC}=\mathrm{EC}=L$.
\end{proposition}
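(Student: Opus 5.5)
The plan is to use the sandwich $\mathrm{FWC}\subseteq\mathrm{EC}\subseteq L$ already established (Theorem~\ref{thm:fwc}, Theorem~\ref{thm:ec}, Corollaries~\ref{cor:LR}--\ref{cor:PC}). Because of this chain, all three classes coincide on a domain as soon as we show $L\cap\mathcal D\subseteq\mathrm{FWC}$ for that domain $\mathcal D$. Each domain is finite, so this reduces to producing, for every target in $L\cap\mathcal D$, an exact realizing instance. Dually, every target of $\mathcal D\setminus L$ has to be excluded by an exact rational violation of dominance, \eqref{eq:LR}, \eqref{eq:LRb} or \eqref{eq:PC}. The exclusions do not show $\mathrm{FWC}=L$; they are needed only to make the counts in the table match. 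I would carry out the steps in the following order.

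First, enumerate each domain exactly in rational arithmetic. For each target, check dominance and every applicable scalar test under its stated hypothesis, including the antecedent $h_n>\rho_1$ for \eqref{eq:LR}--\eqref{eq:LRb} and $\bar H_r(2)<H_w(n)-(n-2)H_w(1)$ for \eqref{eq:PC}. This splits $\mathcal D$ into $L\cap\mathcal D$ and a list of excluded targets, each with a recorded strict violation. By Corollaries~\ref{cor:LR}--\ref{cor:PC}, every recorded violation excludes the target from EC and hence from FWC.

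Second, for every target in $L\cap\mathcal D$, try the explicit constructions in a fixed order: Theorem~\ref{thm:bw2} does not apply since $b_w=3$, so start with A and B (Theorem~\ref{thm:AB}), then T1 with $\lambda_1$ chosen as in Corollary~\ref{cor:T1-region}, then T2, T5 and T6. For the residual cells of Table~\ref{tab:coverage}, search over small integer instances with at most $N^*\le 6+10$ items, as permitted by Lemma~\ref{lem:prune}. Any instance found by search must be independently rechecked. The check computes both raw profiles by exhaustive subset enumeration, takes least concave majorants up to the first maximum, and compares them exactly with the targets. Success means the certificate is a genuine element of FWC by Theorem~\ref{thm:fwc}. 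Floating-point output never counts as a certificate.

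The main obstacle is the residual targets: those in $L$ that satisfy none of the template predicates, all of which have $b_r=3$ in these boxes. For these there is no theorem guaranteeing success, and the proposition is true only if a search actually finds a realization for every one. For these I would first try T5-like isolated-short patterns and the seven-item shape of Theorem~\ref{thm:cap-family}(B). Failing that, I would enumerate fit patterns for $N\le 8$ items, solve the size LP of Proposition~\ref{prop:completion-lp} together with the value LP of Lemma~\ref{lem:bits}, and round to an exact instance. If some target resisted every search, the plan would need a new exclusion argument, as in Theorem~\ref{thm:scalar-gap}, and the claimed equality $\mathrm{EC}=L$ would fail. The claim therefore rests on the archived certificates being complete, with no pending rows.
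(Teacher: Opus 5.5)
Your proposal is correct and is essentially the paper's finite-certificate argument. The chain $\mathrm{FWC}\subseteq\mathrm{EC}\subseteq L$ reduces the equality on each finite domain to two things: an exact realization, verified by exhaustive subset enumeration, for every target not excluded, and a strict rational violation of a proved scalar bound for every excluded target. The paper reads both halves of this partition off the archived catalog, noting that every exclusion violates \eqref{eq:LR} or \eqref{eq:PC} and that the certificate keys match the regenerated slope sequences. The only difference is bookkeeping: you compute $L$ first and then realize its members, whereas the paper starts from the catalog's positive/negative split. As you say, both versions rest on the catalog having no pending rows.
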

\begin{proof}[Finite-certificate proof]
For each dominated domain we regenerate all nonincreasing integer-slope sequences and verify equality of the resulting target set with the certificate keys. The filtered lists are checked for distinct keys and consistency with the corresponding dominated domain. Every positive instance is checked by exhaustive exact subset enumeration. Each of the $26$, $73$, $23$, $156$, $404$ and $144$ exclusions violates \eqref{eq:LR} or \eqref{eq:PC}. Thus the rejected rows lie outside $L$, and necessity places every realized row in EC and $L$. These inclusions certify the displayed equalities; numerical feasibility statuses are not needed to infer them.
\end{proof}

\begin{proposition}[Scalar characterization on the boxes]\label{prop:scalarfinite}
On the stated integer-slope domains, a pair is realizable if and only if it satisfies dominance, \eqref{eq:LR} and \eqref{eq:PC}, with the antecedents of those inequalities respected. No separate check of \eqref{eq:LRb} or an exchange-witness optimization problem is needed for this finite decision.
\end{proposition}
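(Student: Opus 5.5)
The statement is a finite claim on the three filtered and three dominated integer-slope domains, so I would prove it by reducing it to Proposition~\ref{prop:finite} together with the necessity results of Section~\ref{sec:ec}. Let $L'$ be the set of targets that satisfy dominance, \eqref{eq:LR} and \eqref{eq:PC}, each applied only where its antecedent holds. Removing a test can only enlarge the accepted set, so $L\subseteq L'$.

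\emph{Only if.} If a pair is realizable, Theorem~\ref{thm:ec} places it in EC. Corollaries~\ref{cor:LR} and~\ref{cor:PC} then give dominance, \eqref{eq:LR} and \eqref{eq:PC} at every count $0\le m\le b_r$ where their hypotheses apply. This holds for all rational targets, not just those on the boxes.

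\emph{If.} I would use the finite certificate of Proposition~\ref{prop:finite}. On each domain, every target is either a positive row with an exactly verified instance, and hence realizable, or one of the excluded rows. The certificate records that each excluded row strictly violates \eqref{eq:LR} or \eqref{eq:PC} at an instance whose antecedent holds. An excluded row is therefore outside $L'$, so every target in $L'$ on the domain is a positive row and is realizable. This gives $L'\cap\text{domain}=\mathrm{FWC}\cap\text{domain}$. Since $\mathrm{FWC}\subseteq L\subseteq L'$, it also shows that \eqref{eq:LRb} never decides membership here. This matches the remark after Corollary~\ref{cor:LR} that no target on the six domains is excluded by \eqref{eq:LRb} alone.

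\emph{Main obstacle.} The delicate step is the bookkeeping of antecedents. Each archived exclusion must be rechecked with exact rational arithmetic, and two conditions must be confirmed for the violated inequality. For \eqref{eq:LR}, the vertex must satisfy $h_n>\rho_1$. For \eqref{eq:PC}, we need $n\ge3$ and $\bar H_r(2)<H_w(n)-(n-2)H_w(1)$. The count $m$ used must be an admissible integer in $[0,b_r]$, with the flat extension $\bar H_w$ applied past $b_w$. The filtered domains were built with a slightly different \eqref{eq:LR}-type filter, stated via $H_w(n)>(n-1)H_w(1)+\rho_1$. For these I would simply intersect with the dominated certificates, since each filtered domain is a subset of its dominated one. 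The remaining step is an exhaustive, mechanical check of all $7{,}044$ decisions, which is what the archived verifiers perform.
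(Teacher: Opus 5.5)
Your proposal is correct and follows the paper's own argument. Necessity holds for every realizable pair by Theorem~\ref{thm:ec} and Corollaries~\ref{cor:LR}--\ref{cor:PC}, and sufficiency on the boxes comes from the archived catalog, which supplies an exact realization for each accepted target and a strict violation of \eqref{eq:LR} or \eqref{eq:PC}, with its antecedent, for each rejected one; your sandwich $\mathrm{FWC}\subseteq L\subseteq L'$ that disposes of \eqref{eq:LRb} and your reduction of the filtered domains to the dominated ones are the same bookkeeping the paper's verifier performs (the paper only adds that its extra check of \eqref{eq:truncated} follows from \eqref{eq:LR} uncapped, by concavity of $H_r$).
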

\begin{proof}[Finite-certificate proof]
Necessity holds for every realizable pair. For sufficiency within these domains, the row-level catalog \texttt{exact\_exclusion\_certificates.json} gives a violation of \eqref{eq:LR} or \eqref{eq:PC} for every rejected target, and an exact realization is supplied for every remaining target. The verifier checks both directions for every key, not merely the totals. In its additional checks it also evaluates \eqref{eq:truncated}. In the uncapped regime that form at $(n,m,q)$ follows from \eqref{eq:LR} at $(n,q)$, since concavity gives $H_r(q)\ge(q/m)H_r(m)$. Hence those additional checks do not strengthen the uncapped scalar criterion asserted here.
\end{proof}
Neither proposition asserts sufficiency for all rational pairs.

The equivalence is exact only on the stated domains. It does not extend to all rational targets: Theorem~\ref{thm:scalar-gap} supplies a scalar-admissible, unrealizable pair whose largest slope is seven, outside all six domains. The diagnostic for that pair checks its scalar slacks and the two size chains; its bounded size enumeration is not the proof of impossibility.

\subsection{Coverage and the last residuals}
Table~\ref{tab:coverage} describes the constructive coverage of the two dominated boxes with $b_r\le3$. Of the $933$ and $2086$ realizable targets, respectively, A, B, T1 and T2 cover $925$ and $2059$. All $8$ and $27$ residuals have $b_r=3$, but they are not concentrated in a single $w$-vertex type. T5 covers $5$ and $13$ of these residuals. Every other residual has an existing exact instance; these are not additional parameter-wide construction theorems.

For example, the target
\[
 H_w=(0,6,12,15),\qquad H_r=(0,2,4,5)
\]
has the eight-item realization
\[
 (6,6)^2,\ (5,4)^3,\ (2,1)^2,\ (1,1),\qquad D=12,\quad R=3.
\]
The two six-valued items attain count two, and three five-valued items attain count three. Two six-valued items fill $D$; one six-valued item and two five-valued items do not fit. One six-valued and one five-valued item admit at most two unit shorts, of total value at most four. Two five-valued items admit all three shorts. These cases bound every feasible set by the target hull.

\subsection{The capped hierarchy and independent item censuses}\label{app:caps}
The four capped comparisons use all $1089$ dominated slope-five targets, including the $35$ zero-remainder targets. The scalar class $L_K$ is the vertex-restricted class defined in Section~\ref{sec:caps}; ``enriched'' denotes the subset-enriched capped closure.
\begin{center}
\begin{tabular}{crrrrrr}
\toprule
$K$ & Targets & $|L_K|$ & $|\mathrm{EC}^{\mathrm v}_K|$ & Enriched & $|\mathrm{FWC}_K|$ & Excluded\\
\midrule
3 & 1089 & 501 & 501 & 501 & 501 & 588\\
4 & 1089 & 969 & 969 & 959 & 959 & 130\\
5 & 1089 & 933 & 933 & 933 & 933 & 156\\
6 & 1089 & 933 & 933 & 933 & 933 & 156\\
\bottomrule
\end{tabular}
\end{center}
On these same $1089$ targets, exactly $26$ pairs are realizable at $K=4$ but not uncapped, and none is realizable uncapped but not at $K=4$. One of them, $H_w=(0,3,5,7)$ and $H_r=(0,1,2,3)$, belongs to the family of Theorem~\ref{thm:cap-family} and is displayed in Example~\ref{ex:cap-family}. The target-by-target comparison is recorded in \texttt{cap\_four\_vs\_uncapped.json}.

Positive instances and necessary exclusions certify the enriched and FWC columns exactly. The ten vertex-only false positives at $K=4$ also have explicit exact witnesses, checked in \texttt{vertex\_cap\_witnesses.json}; scalar rejections exclude all other negative rows from vertex-only EC. Thus this table does not promote numerical solver statuses to proofs. It shows finite equality of the enriched closure and FWC, not general capped sufficiency.

At $K=3$, $583$ exclusions are dimensional ($b_r=3>K-1$) and $5$ are scalar. The ten vertex-only false positives at $K=4$ are rejected by the truncated bound \eqref{eq:truncated}. Item counts of the positive certificates depend on the witnesses the solver selected, are recorded in the archive, and do not establish minimum instance sizes.

The linear-remainder family provides a separate theorem regression. The three dominated domains contain, respectively, $20$, $35$ and $10$ integer members with $a\ge b>c>0$. The script \texttt{check\_family\_classification.py} compares each recorded uncapped disposition with $H\ge9c$, checks the applicable capped predictions on the slope-five domain, and verifies the constructive branches exactly. There are no mismatches. Twelve additional rational examples include $a=b$ and the three threshold endpoints. These overlapping-domain checks are not pooled as independent observations. The sampled-cap predictions are regression evidence; the assertion for every $K\ge4$ in case~(iv) is proved by Theorem~\ref{thm:cap-family}.

The two item-level censuses enumerate multisets, not random samples: every multiset of one to the stated number of items with integer values from $0$ to the maximum value and integer sizes from $1$ to the maximum size, at every pair of integer capacities $0\le R<D$ up to the stated bound on $D$. With maximum value $4$, maximum size $5$, at most four items and $D\le8$, the recorded census has $844$ distinct hull pairs. With maximum value $3$, maximum size $4$, at most five items and $D\le6$, it has $613$. The numerical EC checks recorded in the census files rejected none of those realized pairs and recorded no unknown statuses. The default verification re-executes both censuses by exact enumeration and rechecks every representative instance exactly.

\subsection{Verification and implementation}
The default verification runs two catalog checks, \texttt{tools/verify\_catalog\_a.py} (combinations and all chords) and \texttt{tools/verify\_catalog\_b.py} (bitmasks and monotone-chain hulls), and then regenerates the result files. Within the regeneration, two exact positive checkers use different algorithms: \texttt{core.py} enumerates subsets by bitmasks and constructs hulls by a monotone-chain routine, and \texttt{verify\_certificates.py} enumerates combinations and maximizes over all chords between raw-profile points. Both use exact rational arithmetic; \texttt{verify\_certificates.py} does not import the profile or hull functions of \texttt{core.py}. It also checks the negative inequalities, the four-row completion dual, all domain keys and the ten explicit capped witnesses. The tabulation script computes Table~\ref{tab:coverage} from the certified targets and the construction predicates; it performs no domain search.

The exploratory searches that produced the candidate instances used integer sizes with $D=60$ and a HiGHS-backed mixed-integer solver. Solver statuses are not evidence here: every accepted positive is reconstructed exactly and verified by exhaustive subset enumeration, and the certificate path in the deposited archive needs only the Python standard library and invokes no numerical solver. A positive target can admit several exact realizations, so independent solver runs need not select the same witness. Reproducibility of the finite claims means exact verification of the deposited catalog and deterministic regeneration of the reported classifications and tables, not byte-identical rediscovery by the optimizer.

Thirty displayed or theorem-related instances were recertified on the integer grid $D=60$, preserving the values of their source certificates and every labeled subset's two-threshold fit pattern. All thirty recertifications are checked exactly. Repeated targets with different constructions remain separate checks. Numerical examples and finite certificates do not substitute for the parameter-wide proofs.

\end{document}